\renewcommand{\leq}{\leqslant}
\renewcommand{\geq}{\geqslant}
\newcommand{\rev}[1]{#1}
\renewcommand{\P}{\mathbb{P}}
\newcommand{\E}{\mathbb{E}}
\newcommand{\ext}{\mathrm{ext}}
\newcommand{\epsb}{ \boldsymbol{\varepsilon}}
\newcommand{\xb}{\boldsymbol{t}}
\newtheorem{theorem}{Theorem}[]
\newtheorem{proposition}[]{Proposition}
\newtheorem{lemma}[]{Lemma}
\theoremstyle{definition}
\newtheorem{remark}{Remark}
\newcommand{\var}{\mathrm{Var}}
\newcommand{\eps}{\varepsilon}
\newcommand{\R}{\mathbb{R}}
\title{\textsc{The critical Karp--Sipser core of random graphs}}
\author{
Thomas \textsc{Budzinski}\thanks{ ENS de Lyon and CNRS.\hfill  \href{mailto:thomas.budzinski@ens-lyon.fr}{\texttt{thomas.budzinski@ens-lyon.fr}}}
\qquad\&\qquad
Alice \textsc{Contat}\thanks{Universit\'e Paris-Saclay.\hfill  \href{mailto:alice.contat@universite-paris-saclay.fr}{\texttt{alice.contat@universite-paris-saclay.fr}}}
\qquad\&\qquad
Nicolas \textsc{Curien}\thanks{Universit\'e Paris-Saclay.\hfill  \href{mailto:nicolas.curien@gmail.com}{\texttt{nicolas.curien@gmail.com}}}
}
\date{}
\begin{document}
	\maketitle 
	
	\abstract{We study the Karp--Sipser core of a random graph made of a configuration  model with vertices of degree $1,2$ and $3$. This core is obtained by recursively removing the leaves as well as their unique neighbors in the graph. We settle a conjecture of Bauer \& Golinelli \cite{bauer2001core} and prove that at criticality, the Karp--Sipser core has size $ \approx \mathrm{Cst} \cdot   \vartheta^{-2}   \cdot n^{3/5}$ where $\vartheta$ is the hitting time of the curve $t \mapsto \frac{1}{t^{2}}$ by a linear Brownian motion started at $0$. Our proof relies on a detailed multi-scale analysis of the Markov chain associated to the Karp-Sipser leaf-removal  algorithm close to its extinction time. 	}
	
	\begin{figure}[!h]
		\begin{center}
			\includegraphics[height=5cm]{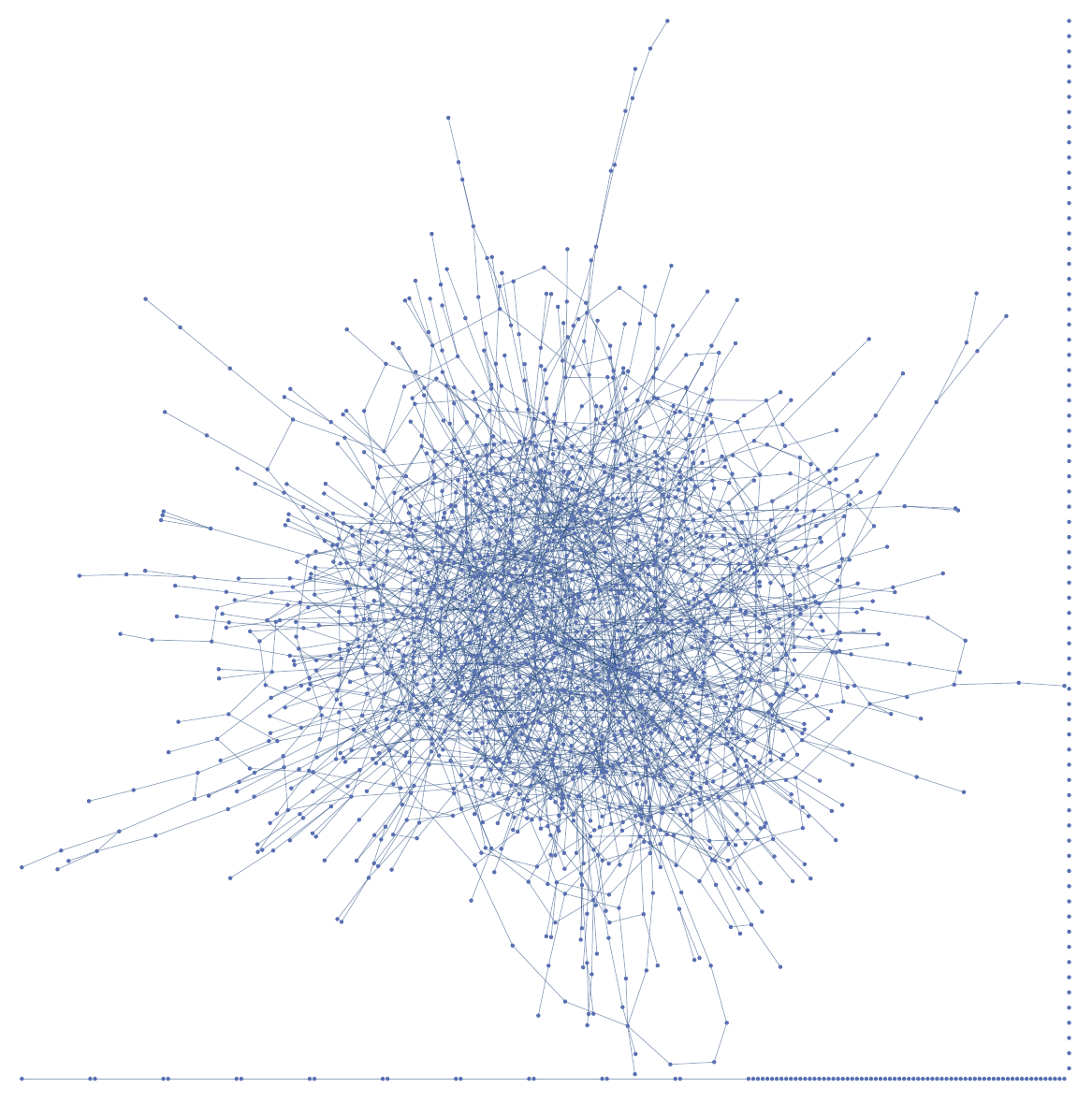} \hspace{0.5cm}
			\includegraphics[height=4cm]{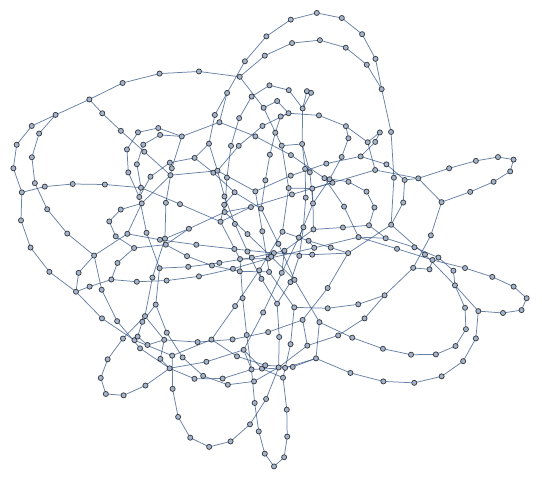}\hspace{0.5cm}
			\includegraphics[height=5cm]{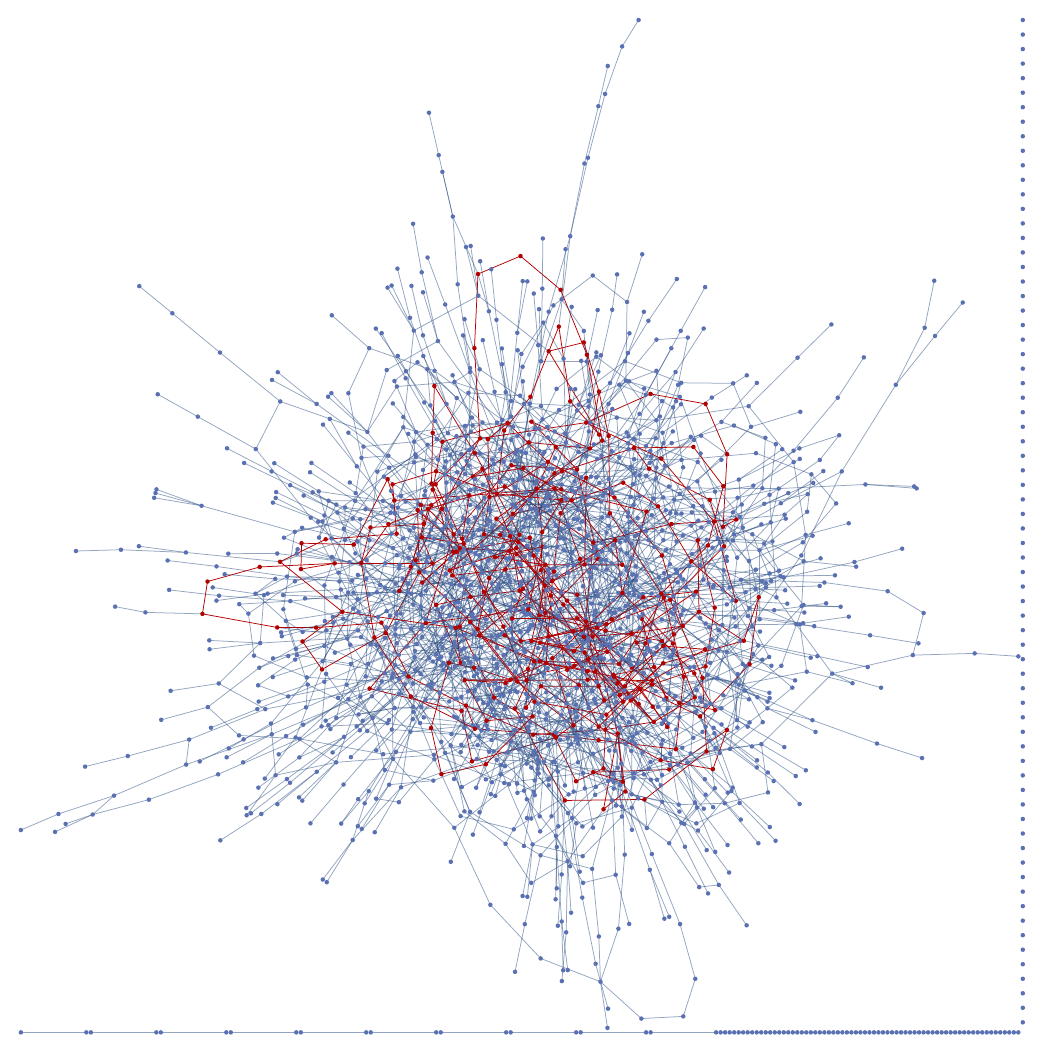}
			\caption{ \textbf{(Left).} The giant component of an Erd{\H{o}}s--R\'enyi random graph $ \mathrm{G}(n, \frac{ \mathrm{e}}{n})$ with $n=2000$ on the left and \textbf{(Middle)} its Karp--Sipser core. \textbf{(Right).} The Karp--Sipser core in red inside the original graph. \label{fig:ERcore}}
		\end{center}
	\end{figure}
	\section {Introduction}
	
	\paragraph{The Karp--Sipser algorithm.}
	Let $ \mathfrak{g}$ be a finite graph. The Karp--Sipser algorithm \cite{karp1981maximum} consists in removing recursively the vertices of degree $1$ in $ \mathfrak{g}$ as well as their unique neighbors \rev{and removing the isolated vertices that may appear in the process}, see Figure \ref{fig:KSillustration}.  The initial motivation of Karp \& Sipser for considering this algorithm is that the leaves\footnote{Here and in the rest of the paper, the concept of leaf is a dynamical concept, as a vertex in the initial graph which is not a leaf may become one later.} and isolated vertices removed during this process form an independent set of $ \mathfrak{g}$ which has very high density. We recall that an independent set in $ \mathfrak{g}$ is a subset of vertices, no two of which are adjacent. The problem of finding an independent set of maximal size is in general a NP-hard problem \rev{\cite{frieze1997algorithmic}}, and the Karp--Sipser algorithm provides a fair lower bound. \rev{More precisely, it is ``optimal in the beginning'' in the sense that there is an independent set with maximal size that contains all the leaves removed by the algorithm (before the first time where there are no leaves left)}. \rev{More generally, greedy strategies are a natural way to approximately solve optimization problems on (random) graphs in a way that is computationally efficient (and sometimes quasi-optimal up to a certain threshold) illustrating the famous ``Greed is good" concept  \cite{tropp2004greed,halldorsson1994greed}. See e.g.~\cite{BJLS18} for  a recent application of a degree-greedy strategy to Wifi protocols.}
	\begin{figure}[!h]
		\begin{center}
			\includegraphics[width=16cm]{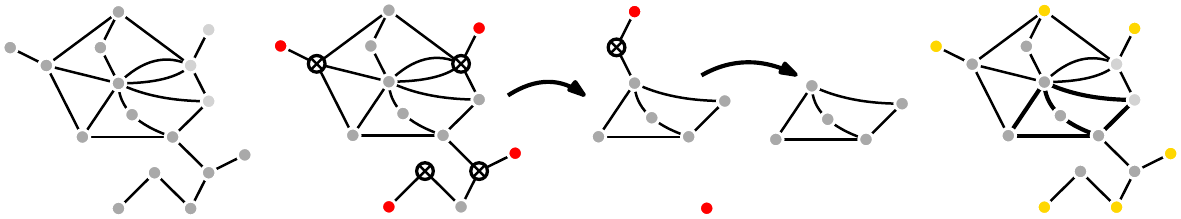}
			\caption{Illustration of the Karp--Sipser algorithm. The first 4 figures show the initial graph, as well as the recursive deletion process of the leaves (in red) together with their unique neighbor (crosses), until no leaf is left: we then obtain the Karp--Sipser core (fourth figure). On the right, the initial graph is represented together with the Karp--Sipser core in thick lines and the independent set formed by the removed ``leaves'' in yellow. \label{fig:KSillustration}}
			
		\end{center}
	\end{figure}

	\paragraph{The Karp--Sipser core of random graphs.}
	A striking property of the leaf-removal process is its Abelian property: whatever the order in which we decide to recursively remove the leaves and their neighbors, we always obtain the same subgraph of $ \mathfrak{g}$ (with no leaves) which we will call the \emph{Karp--Sipser core} of $ \mathfrak{g}$ and denote by $ \mathrm{KSCore}( \mathfrak{g})$, see \cite[Appendix]{bauer2001core} or \cite[Section 1.6.1]{kreacic2017some}. Beware that the above notion differs from the usual $k$-core of a graph\footnote{The $k$-core of $ \mathfrak{g}$ is the largest subset $V$ of its vertices such that for any $v \in V$, the induced degree of $v$ within $V$ is at least $k$.}, see Section \ref{sec:kcore}. By the above remark, the Karp--Sipser algorithm creates an independent set (the leaves removed during the algorithm) whose size is within at most $| \mathrm{KSCore}( \mathfrak{g})|$ from the maximal size of an independent set in $ \mathfrak{g}$.
	
	The performance of the Karp--Sipser algorithm on the Erd{\H{o}}s--R\'enyi random graph $ \mathrm{G}(n, \frac{c}{n})$ has been analyzed  in the pioneer\rev{ing} work \cite{karp1981maximum} and later refined in the breakthrough work~\cite{aronson1998maximum} which established a phase transition as $n \to \infty$ depending on the value of $c$: 
	\begin{itemize}
		\item if $c < \mathrm{e}$, then as $n \to \infty$, the size $| \mathrm{KSCore}( \mathrm{G}(n, \frac{c}{n}))|$ is of order $O(1)$;
		\item if $c > \mathrm{e}$, then as $n \to \infty$, the size $| \mathrm{KSCore}( \mathrm{G}(n, \frac{c}{n}))|$ is of order $n$.
	\end{itemize}
	Those works have later been extended to the configuration model \cite{bohman2011karp,jonckheere2021asymptotic}. However, the careful analysis of the critical case $ c =  \mathrm{e}$ was open as of today to the best of our knowledge. In \cite{bauer2001core}, based on numerical simulations, the physicists Bauer \& Golinelli predicted that $| \mathrm{KSCore}( \mathrm{G}(n, \frac{ \mathrm{e}}{n}))|$ should be of order $n^{3/5}$. The main result of this work (Theorem \ref{thm:maincritical}) is to settle this conjecture in the case of a random graph with degrees $1,2$ and $3$.
	
	\paragraph{Model and results.} In this paper we shall consider a random graph model closely related to $ \mathrm{G}(n, \frac{c}{n})$ but for which the analysis of the Karp--Sipser algorithm is simpler. Namely, we fix a sequence of numbers $ \mathbf{d}^n= (d_1^n, d_2^n,d_3^{n})_{n \geq 1}$ such that $$n = d_{1}^{n} + 2 d_{2}^{n} + 3 d_{3}^{n} \mbox{ is even}.$$ We imagine $ \mathbf{d}^n$ as the number of vertices of degree $1,2$ and $3$ and consider a random multi-graph $ \mathrm{CM}(  \mathbf{d}^{n})$ sampled by pairing the edges emanating from the $d_{1}^{n} + d_{2}^{n} + d_{3}^{n}$ vertices uniformly at random. This is a special instance of the so-called configuration model introduced by Bollobas \cite{Bollobas}, see \cite{RemcoRGII} for background. In the rest of the paper we shall further  assume that 
	\begin{eqnarray} \label{eq:convprop} \frac{d_{1}^{n}}{n} \xrightarrow[n\to\infty]{} p_{1},\quad  \frac{2d_{2}^{n}}{n} \xrightarrow[n\to\infty]{} p_{2}, \quad \mbox{ and } \quad \frac{3d_{3}^{n}}{n} \xrightarrow[n\to\infty]{} p_{3},  \end{eqnarray}
	\rev{where $p_1, p_2, p_3 \geq 0$,} so that the proportion of half-edges which are incident to a vertex of degree $i$ is $p_i$. Our goal will be to analyze $ \mathrm{KSCore}( \mathrm{CM}(  \mathbf{d}^{n}))$. A phase transition has been observed in \cite{jonckheere2021asymptotic} for the size of the Karp--Sipser core but its location depending on $  (p_{1}, p_{2}, p_{3})$ was not explicit. Our first contribution is to make this threshold precise. For a multi-graph $\mathfrak{g}$, we will write $|\mathfrak{g}|$ for twice the number of edges of $\mathfrak{g}$, and call this quantity the \emph{size} of $\mathfrak{g}$. If $(u_n)$ is a sequence of positive numbers and $(X_n)$ a sequence of random variables, we will write $X_n=O_{\mathbb{P}}(u_n)$ if $\left( u_n^{-1} X_n \right)$ is tight, and we will write $X_n=o_{\mathbb{P}}(u_n)$ if $u_n^{-1} X_n$ converges to $0$ in probability.
	
	\begin{theorem}[Explicit phase transition] \label{thm:phasetransition} Under the assumptions \eqref{eq:convprop}, let 
	\begin{equation}\label{eq:defTheta}\Theta = (p_3-p_1)^2 - 4p_1.\end{equation}
		\begin{itemize}
			\item \textbf{Subcritical phase.} If $\Theta<0$, 
			 then  as $n \to \infty$ we have $$ |\mathrm{KSCore}( \mathrm{CM}(  \mathbf{d}^{n})) | = O_{ \mathbb{P}}(\log^2 n).$$ 
			\item  \textbf{Supercritical phase.} If $\Theta>0$, 
			 then
			$$ n^{-1} \cdot |\mathrm{KSCore}( \mathrm{CM}(  \mathbf{d}^{n}))|  \xrightarrow[n\to\infty]{( \mathbb{P})}   \frac{4 \Theta}{3+ \Theta}.$$
			\item \textbf{Critical phase.} If $\Theta =0$,
			 then  $ |\mathrm{KSCore}( \mathrm{CM}(  \mathbf{d}^{n}))| = o_{ \mathbb{P}}(n)$.
		\end{itemize}
	\end{theorem}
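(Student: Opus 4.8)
The natural approach is to run the leaf-removal procedure as a Markov chain and feed it into Wormald's differential equation method. By the Abelian property we may realize $\mathrm{KSCore}$ as the output of the following dynamics: while the current multigraph has a vertex of degree $1$, pick one, say $u$, reveal the partner $v$ of its half-edge, delete $u$ and $v$, and for each of the other $\deg(v)-1$ half-edges of $v$ reveal its partner and lower by one the degree of the endpoint vertex (erasing any vertex that reaches degree $0$); stop when no degree-$1$ vertex remains. Since in $\mathrm{CM}(\mathbf{d}^n)$ the partner of any queried half-edge is uniform among those not yet revealed, exchangeability shows that the triple $\mathbf{S}(t)=(S_1,S_2,S_3)(t)$ counting the half-edges carried by residual vertices of degrees $1,2,3$ after $t$ steps is a Markov chain with increments bounded by an absolute constant, and whose one-step drift is an explicit polynomial in the proportions $x_i:=S_i/S$, $S:=S_1+S_2+S_3$ --- obtained by drawing $\deg(v)=i$ with probability $x_i+O(1/S)$, then the degrees of $v$'s remaining neighbours as i.i.d.\ copies of the same law, and bookkeeping the induced degree changes.

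Wormald's theorem then yields that, as long as $S\ge \varepsilon n$, the rescaled chain $n^{-1}\mathbf{S}(\lfloor sn\rfloor)$ stays within $o_{\mathbb{P}}(1)$, uniformly on compact time intervals, of the solution $(\hat S_1,\hat S_2,\hat S_3)(s)$ of the associated autonomous ODE started at $(p_1,p_2,p_3)$. This ODE is explicitly integrable ($\hat S_3$ is strictly decreasing and is tied to $\hat S:=\hat S_1+\hat S_2+\hat S_3$ by a conserved quantity), and the computation shows that the orbit first hits the hyperplane $\{\hat S_1=0\}$ at a time where $\hat S_2+\hat S_3>0$ \emph{precisely when} $\Theta>0$, the value of $\hat S_2+\hat S_3$ there being $\frac{4\Theta}{3+\Theta}$; when $\Theta\le 0$ the orbit instead reaches the origin $(0,0,0)$ in finite time, with the leaf proportion $x_1=\hat S_1/\hat S$ tending to $0$ if $\Theta=0$ but staying bounded below by a positive constant all along the orbit if $\Theta<0$. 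In the supercritical case this settles the claim: with high probability the chain reaches $S_1=0$ while $S_2+S_3=\frac{4\Theta}{3+\Theta}n+o(n)$ --- near the end the drift of $S_1$ is $\approx-\Theta<0$, so $S_1$ runs down from $o(n)$ to $0$ within $o(n)$ further steps, which move $S_2+S_3$ by only $o(n)$ --- and the residual graph at that instant is exactly $\mathrm{KSCore}$. In the critical case the fluid limit forces $\mathbf{S}$ down to $o_{\mathbb{P}}(n)$ before $S_1$ vanishes, hence $|\mathrm{KSCore}|=o_{\mathbb{P}}(n)$; the precise order $n^{3/5}$ is the content of the main theorem and is not needed here.

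The subcritical regime is the delicate one: the fluid limit only gives $|\mathrm{KSCore}|=o_{\mathbb{P}}(n)$, whereas the claim is $O_{\mathbb{P}}(\log^2 n)$. As the Karp-Sipser core can be far smaller than the $2$-core, this sharper bound is invisible to the fluid limit and forces one to control the chain once only a vanishing fraction of the half-edges survive. On the positive side, along the subcritical orbit the leaf proportion $x_1$ stays bounded below, so leaf removal keeps eroding the residual graph; re-seeding Wormald's estimate on the dyadic scales $S\asymp\varepsilon n,\varepsilon^2 n,\dots$ (legitimate since the degree proportions stay in a compact region on which the drift is Lipschitz) drives the chain, with high probability, down to $S$ polylogarithmic in $n$ while keeping $S_1\gtrsim S$. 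The last step --- showing that what remains then has Karp-Sipser core of size $O_{\mathbb{P}}(\log^2 n)$ --- is exactly a near-extinction analysis of the chain: once $S$ is polylogarithmic the fluctuations of $\mathbf{S}$ are of the same order as $\mathbf{S}$ itself, the differential equation method is useless, and one needs the finer (multi-scale) description of the end of the chain's life. This near-extinction regime is the main obstacle, and it is also what the heavier machinery of the paper is designed to handle.
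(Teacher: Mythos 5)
Your treatment of the supercritical and critical cases follows the paper's route (leaf-removal Markov chain, Wormald fluid limit, explicit integration of the ODE, extinction-time control via the negative drift of the leaf count) and is essentially correct. The gap is in the subcritical case, where you stop short: you assert that the $O_{\mathbb{P}}(\log^2 n)$ bound requires a near-extinction, multi-scale analysis of the chain once $S$ is polylogarithmic, and you do not carry it out. In fact no such analysis is needed, and the missing idea is a soft one. In the subcritical regime the explicit solution of the ODE shows not only that the orbit reaches the origin, but that for $t_1<t_{\ext}$ close enough to $t_{\ext}$ one has $\mathscr{Z}(t_1)<\mathscr{X}(t_1)$, i.e.\ fewer half-edges on degree-$3$ vertices than on degree-$1$ vertices. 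Stop the exploration at the deterministic time $\lfloor t_1 n\rfloor$: by the Markov property of the exploration, conditionally on the past the unexplored part is a uniform configuration model with degree counts $\approx n(\mathscr{X},\mathscr{Y},\mathscr{Z})(t_1)$, and $\mathscr{Z}(t_1)<\mathscr{X}(t_1)$ makes it \emph{subcritical as a configuration model} (Molloy--Reed criterion). Classical results (\cite{molloy1995critical}) then give that with high probability this residual graph has at most $c\log n$ cycles and all components of size at most $c\log n$. Since the Karp--Sipser core has no leaves, each of its components contains a cycle, so the core is contained in the union of the cycle-bearing components of the residual graph, hence has size at most $c^2\log^2 n$; by the Abelian property this residual core is exactly $\mathrm{KSCore}(\mathrm{CM}(\mathbf{d}^n))$. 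This one observation replaces your proposed dyadic re-seeding of Wormald (which, as you note yourself, you cannot close) and yields the stated $O_{\mathbb{P}}(\log^2 n)$ directly; the heavy multi-scale machinery of the paper is needed only for the critical case of Theorem \ref{thm:maincritical}, not for Theorem \ref{thm:phasetransition}.
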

	\rev{We note that by the criterion of~\cite{molloy1995critical}, there is a giant component in $\mathrm{CM}(  \mathbf{d}^{n})$ if and only if $p_3>p_1$, so the phase transition is distinct from the ``classical" giant component transition.}	
	
	\paragraph{Sketch of proof of the phase transition.}
	The proof of this theorem uses classical techniques. We shall reveal the random graph $ \mathrm{CM}( \mathbf{d}^{n})$ by pairing its half-edges two-by-two as we perform the Karp--Sipser leaf removal algorithm (a.k.a.~peeling algorithm). More precisely, when we remove a leaf, we reveal its neighbor in the graph and remove it as well, which decreases the degrees of some other vertices. During this process, the number of remaining vertices of degree $1,2$ and $3$ evolves as a $(\mathbb{Z}_{\geq 0})^3$-valued Markov chain with explicit \rev{transition probabilities}. This is, of course, a recurrent idea in random graph theory and has already been used many times for the Karp--Sipser algorithm itself \cite{karp1981maximum,aronson1998maximum}. More precisely, we shall erase leaves uniformly at random one-by-one (in contrast with  \cite{jonckheere2021asymptotic}, where all possible leaves are erased at each round) and use the \emph{differential equation method} \cite{wormald1995differential} to prove that the renormalized number of vertices of degree $1,2$ and $3$ is well approximated by a differential equation on $ \mathbb{R}^{3}$ for which we are able to find explicit solutions. In a sense, this returns to the roots of this method since it was Karp \& Sipser \cite{karp1981maximum} who first introduced it in the context of random graphs following earlier works of Kurtz \cite{kurtz1970solutions} in population models. 
	
	\begin{remark}[A spectral parallel to the Karp--Sipser phase transition] \label{rek:CS}
		The \emph{nullity} of a graph is the multiplicity of $0$ in the spectrum of its adjacency matrix. It is easy to see that the leaf-removal process on a graph $ \mathfrak{g}$ leaves its \emph{nullity} invariant and so the Karp--Sipser algorithm can also be used to study the \rev{latter}, see \cite{bauer2001random,salez2011some}. The phase transition for the emergence of a Karp--Sipser core of positive proportion in $ \mathrm{G}(n, \frac{ \mathrm{e}}{n})$ has a parallel phase transition for the emergence of extended states (an absolutely continuous part) at zero in $ \mathrm{G}(n, \frac{ \mathrm{e}}{n})$, see \cite{bauer2001random,coste2021emergence}. \rev{More precisely, if $G_n$ is a random graph of size $n$, we can associate with it the empirical eigenvalues distribution 
		$$ \mu_{G_n} = \sum_{i=1}^n \delta_{\lambda_i},$$ where $\lambda_1 \geq ... \geq \lambda_n$ are the real eigenvalues of its (symmetric) adjacency matrix. When $G_n =\mathrm{G}(n, \frac{c}{n})$, an adaptation of the celebrated result of Wigner states that $\mu_{G_n}$ converges towards a deterministic but non-explicit measure $\mu_c$, which is the expected spectral measure of the (unimodular) Poisson--Galton--Watson tree with mean $c$, see \cite{bordenave2010resolvent}. Although $\mu_c$ is quite mysterious, Bauer and Golinelli conjectured that it undergoes a phase transition at $c = \mathrm{e}= 2,718...$: if $c < \mathrm{e}$ then they predicted that
		$$ \lim_{ \varepsilon \to 0} \frac{\mu_c([- \varepsilon, \varepsilon]) - \mu_c(\{0\})}{ \varepsilon} =0,$$ in which case $\mu_c$ is said to have non-extended states at $0$, and if $c> \mathrm{e}$ then $\mu_c$ has extended states at $0$. This conjecture was proved by Coste and Salez \cite{coste2021emergence} who more generally gave an explicit criterion to decide whether the expected spectral measure $\mu_\pi$ of a unimodular Galton--Watson tree with offspring distribution $\pi$ possesses or not an extended state at $0$.  In particular, if $G_n = \mathrm{CM}( \mathbf{d}^n)$ under assumption \eqref{eq:convprop}, then it is easy to see that $G_n$ converges in the Benjamini--Schramm sense towards the unimodular version of a Galton--Watson tree with offspring distribution $ \pi = p_1 \delta_0 + p_2 \delta_1  + p_3 \delta_2$, and consequently $\mu_{G_n}$ converges towards $\mu_\pi$ by \cite{bordenave2010resolvent}. In this case, an easy calculation shows that the Coste--Salez criterion for existence of an extended state for $\mu_\pi$ is precisely $\Theta >0$ where $\Theta$ is defined by \eqref{eq:defTheta} in Theorem \ref{thm:phasetransition}. It is very tempting to conjecture that the ``continuous" part of the spectrum near $0$ in $\mu_\pi$ is indeed created by the giant component of the Karp--Sipser core, but unfortunately, we have no geometric nor spectral explanation of the coincidence of the above two thresholds which come from two very different computations.}
	\end{remark}
	
	We now turn to the detailed analysis of the \emph{critical case} which is the main goal of our work. For this we fix a particular degree sequence $ \mathbf{d}_{ \mathrm{crit}}^{n} = (d_{1,c}^{n},d_{2,c}^{n},d_{3,c}^{n})$ such that $d_{1,c}^{n} + 3 d_{3,c}^{n}=n$ is even (to be able to perform the configuration model) and
	\begin{eqnarray} \label{eq:convcrit} d_{1,c}^{n} = n \left( 1-\frac{ \sqrt{3}}{2} \right) + O(1), \quad 2 d_{2,c}^{n} =0 , \quad \mbox{ and } 3 d_{3,c}^{n} = n \frac{ \sqrt{3}}{2} +O(1).  \end{eqnarray} In particular we have $\Theta = ( \sqrt{3}-1)^{2}-4(1- \frac{ \sqrt{3}}{2})=0$ so we are indeed in the critical case of Theorem \ref{thm:phasetransition}. By definition, the core $\mathrm{KSCore}( \mathrm{CM}( \mathbf{d}_{ \mathrm{crit}}^{n}))$ has only vertices of degrees $2$ or $3$. Our main result is then the following:
	\begin{theorem}[Geometry of the critical Karp--Sipser core] \label{thm:maincritical} Let $D_{2}(n)$ (resp. $D_{3}(n)$) be the total number of half-edges attached to a vertex of degree $2$ (resp. $3$) in $\mathrm{KSCore}( \mathrm{CM}( \mathbf{d}_{ \mathrm{crit}}^{n}))$. Then we have 
		$$ \left(\begin{array}{c}n^{-3/5} \cdot D_{2}(n) \\  n^{-2/5} \cdot D_{3}(n) \end{array}\right)  \xrightarrow[n\to\infty]{(d)} \cdot \left( \begin{array}{c}3^{-3/5}2^{14/5} \cdot \vartheta^{-2} \\  
		3^{-2/5}2^{16/5} \cdot \vartheta^{-3} \end{array}\right),$$ 
		where $ \vartheta =  \inf \{ t \geq 0 : \, B_t=t^{-2}\},$
		 for a standard linear Brownian motion  $(B_t: t \geq 0)$ started from $0$. 	Moreover, conditionally on $(D_{2}(n),D_{3}(n))$, the graph $ \mathrm{KSCore}( \mathrm{CM}( \mathbf{d}_{ \mathrm{crit}}^{n}))$ is a configuration model.\end{theorem}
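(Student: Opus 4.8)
The plan is to reveal the pairing of $\mathrm{CM}(\mathbf{d}_{\mathrm{crit}}^n)$ progressively while running the Karp--Sipser algorithm one leaf at a time, exactly as in the proof of Theorem~\ref{thm:phasetransition}. Write $N_i(k)$ for the number of vertices of current degree $i\in\{1,2,3\}$ after $k$ steps; then $(N_1,N_2,N_3)$ is a Markov chain with explicit transitions, the core is revealed at the first time $k^\star$ with $N_1=0$, and the whole theorem reduces to the joint law of $(N_2(k^\star),N_3(k^\star))$ together with a soft argument for the conditional law of the core. I would organise the analysis into a \emph{fluid phase}, tracking the chain down to the scale $N_2\asymp n^{3/5}$, and a \emph{diffusive critical window} of $\asymp n^{3/5}$ steps in which the extinction takes place.

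\textbf{Fluid phase.} By the differential equation method, $(N_1,N_2,N_3)/n$ stays uniformly close to the solution of an explicit autonomous ODE whose drift is a rational function of $q_i=iN_i/(N_1+2N_2+3N_3)$. Since $\Theta=0$, the trajectory issued from $(p_1,0,p_3)$ reaches the origin, and the first task is to describe this approach precisely. Using the explicit solutions of the ODE (as for Theorem~\ref{thm:phasetransition}), one finds that along the critical trajectory, as $(N_1,N_2,N_3)/n\to(0,0,0)$,
\[ N_2 \ \text{decreases by }2\text{ per step},\qquad N_3\sim c_0\, n^{-1/2}N_2^{3/2},\qquad N_1\sim \tfrac{9c_0^2}{8}\,N_2^2/n, \]
for an explicit constant $c_0$ (the coefficient in the $(N_2/n)^{3/2}$ asymptotic of the critical trajectory; one computes $c_0=\sqrt{2/3}$). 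I would then bootstrap the differential equation estimates down dyadic windows $N_2\in[2^{-j-1}\varepsilon n,2^{-j}\varepsilon n]$, with Freedman-type concentration at each scale, to reach the input required for the next phase: when $N_2$ first drops below $An^{3/5}$ (for a large constant $A$), one has $N_1=\tfrac{9c_0^2}{8}A^2n^{1/5}(1+o(1))$ and $N_3=c_0A^{3/2}n^{2/5}(1+o(1))$, with errors negligible on the window's own fluctuation scales.

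\textbf{Critical window.} Set $\mathsf{x}=N_1n^{-1/5}$, $\mathsf{y}=N_2n^{-3/5}$, $\mathsf{z}=N_3n^{-2/5}$, and rescale time by $n^{-3/5}$. A direct computation of the one-step conditional mean and variance of the chain in this regime gives $\mathbb{E}[\Delta N_2]=-2+O(n^{-1/5})$ with fluctuations $o(n^{3/5})$, so $\mathsf{y}(\mathsf{s})=\mathsf{y}(0)-2\mathsf{s}+o(1)$; $\mathbb{E}[\Delta N_3]\sim -3N_3/N_2$ with fluctuations $o(N_3)$, so $\mathsf{z}$ remains on the curve $\mathsf{z}=c_0\mathsf{y}^{3/2}$; and, crucially, the leaf drift \emph{cancels at first order}: $\mathbb{E}[\Delta N_1]=-2N_1/N_2-\tfrac94(N_3/N_2)^2$ to leading order, while $\mathrm{Var}(\Delta N_1)\sim 3N_3/N_2$. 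A martingale functional central limit theorem (with slowly varying, self-averaging coefficients governed by $\mathsf{y},\mathsf{z}$) then shows that $\mathsf{x}$ converges in law to the solution of the linear Ornstein--Uhlenbeck type stochastic differential equation
\[ \mathrm{d}\mathsf{w}_{\mathsf{s}}=-\frac{2\,\mathsf{w}_{\mathsf{s}}}{\mathsf{y}(\mathsf{s})}\,\mathrm{d}\mathsf{s}+\sqrt{3c_0}\;\mathsf{y}(\mathsf{s})^{1/4}\,\mathrm{d}B_{\mathsf{s}},\qquad \mathsf{w}:=\mathsf{x}-\tfrac{9c_0^2}{8}\mathsf{y}^2, \]
started, in the limit $A\to\infty$, far inside the fluid regime, and the core is revealed at $\mathsf{s}^{\mathrm{ext}}=\inf\{\mathsf{s}:\mathsf{x}_{\mathsf{s}}=0\}=\inf\{\mathsf{s}:\mathsf{w}_{\mathsf{s}}=-\tfrac{9c_0^2}{8}\mathsf{y}(\mathsf{s})^2\}$. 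Solving this linear SDE explicitly, dividing the extinction identity by $\mathsf{y}(\mathsf{s})>0$, and letting $A=\mathsf{y}(0)\to\infty$ — in which limit the contribution of the initial condition is damped away by the mean-reverting drift — the extinction condition becomes $\beta_t=-\mathrm{const}\cdot t^{-2}$ with $t:=\mathsf{y}(\mathsf{s})^{-1/2}$ and $\beta$ a standard Brownian motion. Symmetry of $\beta$ together with the Brownian scaling $\inf\{t:B_t=\lambda t^{-2}\}\overset{(d)}{=}\lambda^{2/5}\,\vartheta$ identifies the first such $t$ as a constant multiple of $\vartheta$, hence $\mathsf{y}^{\mathrm{ext}}=(t^{\mathrm{ext}})^{-2}$ as a constant multiple of $\vartheta^{-2}$. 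Reading off $D_2=2N_2(k^\star)=2\mathsf{y}^{\mathrm{ext}}n^{3/5}$ and $D_3=3N_3(k^\star)=3c_0(\mathsf{y}^{\mathrm{ext}})^{3/2}n^{2/5}$ and plugging in $c_0=\sqrt{2/3}$ yields precisely the constants $3^{-3/5}2^{14/5}$ and $3^{-2/5}2^{16/5}$, jointly as functions of the same $\vartheta$.

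\textbf{The conditional law, and the main obstacle.} The last assertion is soft: the algorithm only ever reveals the partner of a half-edge attached to a vertex it is about to delete, so every edge with both endpoints in the core is left unrevealed; conditionally on all the revealed pairings — a fortiori on $(D_2(n),D_3(n))$, which determine the degree sequence of the core while the core vertices are exchangeable — the pairing of the core half-edges is uniform, i.e.\ $\mathrm{KSCore}(\mathrm{CM}(\mathbf{d}_{\mathrm{crit}}^n))$ is a configuration model with the matching degree sequence. I expect the main difficulty to be the fluid phase: the differential equation method must be pushed down to the scale $N_2\asymp n^{3/5}$ (density $n^{-2/5}$), far outside its usual comfort zone, and not merely up to $o(n)$ errors but with errors negligible on the $n^{1/5}$ and $n^{2/5}$ scales of the window; this requires a careful multi-scale bootstrap together with sharp quantitative control of the critical trajectory near the degenerate fixed point $(0,0,0)$. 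A secondary, more routine but still delicate point is the functional central limit theorem in the window, whose limiting SDE has coefficients that are themselves random through $\mathsf{y},\mathsf{z}$ and have to be shown to concentrate, with the diffusion approximation controlled uniformly up to the extinction time.
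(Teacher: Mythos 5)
Your proposal follows essentially the same route as the paper: the same leaf-by-leaf exploration of the pairing (so the ``configuration model conditionally on the degrees'' statement is the same one-line observation), the same fluid limit with the critical asymptotics near extinction (your $c_0=\sqrt{2/3}$ and $N_1\sim\frac34 N_2^2/n$ are exactly $\mathscr X\sim3\eps^2$, $\mathscr Y\sim4\eps$, $\mathscr Z\sim4\sqrt3\,\eps^{3/2}$ rewritten in vertex counts), and the same critical window. Your one-step drift and variance computations in the window are correct, and your reduction --- passing to $\mathsf{w}/\mathsf{y}$, which is a martingale run in the clock $\mathsf{y}^{-1/2}$ --- is precisely the paper's driftless process $\widetilde F_k=\widetilde A_k/\xb_k^{1/4}$ converging to $\bigl(W_{1/\sqrt t}\bigr)$ and hitting the curve $-3t$ (Proposition \ref{prop:end}); I checked that your constants $2^{14/5}3^{-3/5}$ and $2^{16/5}3^{-2/5}$ come out right, jointly in the same $\vartheta$.

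The one place where the plan, as written, would not close is the intermediate regime, and it is worth being precise about why. ``Freedman-type concentration at each dyadic scale'', naively chained, only controls the martingale part of $N_1$ by $\epsb_k^{3/4}\sqrt n$ \emph{per scale}; accumulated from the bulk down to $\epsb_k\approx n^{-2/5}$ these errors add up to order $\sqrt n\gg n^{1/5}$, which swamps $N_1$ (and likewise $\sqrt n \gg n^{2/5}$ swamps $N_3$) long before the window, so your entrance conditions $N_1=\frac34A^2n^{1/5}(1+o(1))$, $N_3=c_0A^{3/2}n^{2/5}(1+o(1))$ cannot be reached by concentration alone. What makes the multi-scale step work is the restoring drift you already wrote down ($-2w/N_2$ per step, with relaxation time comparable to one dyadic scale), exploited at \emph{every} scale and not only inside the window: this is exactly the content of the paper's drift and variance estimates and its ``good region'' argument (Propositions \ref{prop_drift_estimates}--\ref{prop:goodregion}), which keep $|A_k|\lesssim\epsb_k^{3/4}\sqrt n$ up to logarithmic factors and control $N_3$ through the mean reversion of the combination $\widetilde C-\frac{3\sqrt3}{2}\widetilde B$, together with the refined bound of Proposition \ref{prop:controlA} which is what guarantees that the initial value of your martingale $\mathsf{w}/\mathsf{y}$ at the entrance $\mathsf{y}(0)=A$ vanishes as $A\to\infty$. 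If you recast your bootstrap as a ``drift pulls the process back, the martingale part cannot fight it'' argument (Doob's inequality scale by scale, with the barrier level allowed to grow slowly in the scale index), your outline becomes the paper's proof.
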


\begin{remark}[Bauer \& Golinelli's prediction] The above theorem confirms a long-standing prediction of Bauer \& Golinelli \cite{bauer2001core} stated in the case of the Erd{\H o}s-R\'enyi random graph: based on Monte-Carlo simulations they proposed a few possible sets of critical exponents \cite[Table 1]{bauer2001core} and our theorem confirms their prediction. See also \cite{goldschmidt2019spread,kreacic2017some} for later developments. 
\end{remark}

	Note that our assumptions on the initial degree sequence are much stronger than for Theorem~\ref{thm:phasetransition} since the size of the critical core is quite sensitive to initial conditions. Our proof still works if \rev{the triple $\left( 1-\frac{\sqrt{3}}{2}, 0, \frac{\sqrt{3}}{2}\right)$ in~\eqref{eq:convcrit} is replaced by a triple $(p_1, p_2, p_3)$ that is critical in the sense of Theorem~\ref{thm:phasetransition} (this is equivalent to starting "later" along the critical curve of Figure~\ref{fig:equadiff_trajectories} below)}. Moreover, the error $O(1)$ can be replaced by $O(n^{1/2})$, and the result should remain true as long as the initial error is $o(n^{3/5})$, see Section~\ref{subsec:near_critical} for a discussion on the near-critical regime.
	\rev{On the other hand, the reason why we restricted ourselves to vertex degrees bounded by $3$ is that it is the regime where we could find explicit solutions to the differential equation which appears in the scaling limit. However,} we believe that the above limiting result holds \rev{(possibly with different constant factors)} for a large variety of random graphs which are critical for the Karp--Sipser algorithm. In particular, we expect a similar result for configuration models with bounded degrees and for the Erd{\H{o}}s--R\'enyi graph \rev{(the latter is a work in progress of the first two authors)}.

	\paragraph{Ideas of proof.} The proof of Theorem~\ref{thm:maincritical} uses the same Markov chain as the one used to study the phase transition. \rev{That is, at each step of the Karp--Sipser algorithm, the Markov chain is a triple of integers giving the number of vertices of degree $1$, $2$ and $3$ in the unexplored part of the graph.} The difference is that we need to study \rev{at a much finer scale} the behaviour of this chain right before \rev{its extinction time, i.e. the first time where the number of leaves hits $0$.} More precisely, we can expect from the differential equation approximation that $\eps n$ steps before extinction, the number of vertices of unmatched degrees $1$, $2$ and $3$ are respectively of order $\eps^2 n$, $\eps n$ and $\eps^{3/2} n$. On the other hand, a variance computation shows that the fluctuations of the number of vertices of degree $1$ are of order $\eps^{3/4} \sqrt{n}$. Finally, the time at which we can expect the Markov chain to terminate is the time when the fluctuations exceed the expected value, that is at $\eps=n^{-2/5}$. However, checking that the differential equation approximation remains good until that scale requires some careful control of the Markov chain across scales. In particular, the reason why the fluctuations become much smaller than $\sqrt{n}$ in the end of the process is that the drift of our Markov chain induces a ``self-correcting" effect.
	
	\rev{
	\paragraph{Level of generality of the method.} While Theorem~\ref{thm:maincritical} is limited to a quite specific model, we believe that the techniques developed in its proof could more generally be used to understand precisely the exit times of Markov chains from domains.
	To fix ideas, let $ (\mathbb{X}^n : k \geq 0)$ be a $ \mathbb{Z}^d$-valued Markov chain whose expected conditional drift is well-approximated by $\phi( \mathbb{X}^n/n)$ for some function $\phi : \mathbb{R}^d \to \mathbb{R}^d$. The differential equation method shows that under some mild assumptions $ (n^{-1}  \mathbb{X}^n_{\lfloor tn \rfloor} : t \geq 0)$ converges towards a solution $ \mathcal{X}$ to $ \mathcal{X}'(t) = \phi( \mathcal{X}(t))$. If $\Omega$ is a bounded domain and $\Omega^n$ is its discrete approximation, it is reasonable to believe that the exit time $\theta^n$ of $\Omega^n$ by $ \mathbb{X}^n$ should converge after normalization towards the exit time $ t_{ \mathrm{ext}}$ of $\Omega$ by $ \mathcal{X}$. However, the fine fluctuations of $\theta^n$ around $ n t_{ \mathrm{ext}}$ should depend on fine properties of $\phi$ (and its derivatives) near the exit point. In the ``generic" case, it is natural to expect $\theta^n$ to satisfy a central limit theorem, with fluctuations of order $\sqrt{n}$. On the other hand, our techniques allow to study precisely the fluctuations of $\theta^n$ in a ``non-generic" setting, where the gradient of $\phi$ is tangent to $\partial \Omega$ at the exit point or in the presence of saddle points as in \cite{turner2007convergence}. Developing a general result should have applications to many other problems. Two natural examples are the study of the $k$-core of random graphs and the critical Karp--Sipser core of the more natural Erd\"os--R\'enyi random graph model, which will be the object of a future work by the first two authors. We refer to Section~\ref{sec:comments} for a discussion on these problems.
	}
	
	\bigskip
		
	\noindent \textbf{Acknowledgments.} The last two authors were supported by ERC 740943 GeoBrown and by ANR RanTanPlan. The first author is grateful to the Laboratoire de Mathématiques d'Orsay, where most of this work was done, for its hospitality. We warmly thank Matthieu Jonckheere for a stimulating discussion about  \cite{jonckheere2021asymptotic}  and Justin Salez for enlightening explanations about maximal matchings and independent sets in random graphs. \rev{We are grateful to Adrianus Twigt who kindly checked that the Coste--Salez threshold coincides with the Karp--Sipser threshold in our configuration model, see Remark \ref{rek:CS}. We also thank two anonymous referees for their helpful remarks.}

	\section{Karp--Sipser exploration of the configuration model}
	As we mentioned in the introduction the main idea (already present in \cite{karp1981maximum,aronson1998maximum,jonckheere2021asymptotic,bohman2011karp,kreacic2017some}) is to explore the random configuration model $ \mathrm{CM}( \mathbf{d}^{n})$ at the same time as we run the Karp--Sipser algorithm to discover its core. Let us explain this in details. Fix a degree sequence $ \mathbf{d}^{n}=( d_{1}^{n}, d_{2}^{n}, d_{3}^{n})$ such that $n= d_1^n+2d^n_2+3d_3^n$ is even. We shall expose the $ \frac{n}{2}$ edges of $ \mathrm{CM}( \mathbf{d}^{n})$ one by one and create a process 
	$$(X^n_k,Y^n_k,Z^n_k : k \geq 0)$$
	where $X^{n}, Y^{n}, Z^{n}$ represent respectively the number of unmatched half-edges linked to vertices of \emph{unmatched degree} $1,2,3$ \rev{(the unmatched degree of a vertex at time $k$ is the number of half-edges attached to this vertex which are still unmatched at time $k$).} The process of the sum is denoted by $S^{n} = X^{n} + Y^{n} + Z^{n}$. In particular, we always have $(X^{n}_0, Y^{n}_0, Z^{n}_0) = (d_{1}^{n}, 2 d_{2}^{n}, 3 d_{3}^{n})$ and $S^n_0=n$ with our conventions. 
	
	As long as $X^{n}_k >0$, the process evolves as follows. Since $X^{n}_k >0$, there are still vertices of unmatched degree $1$. We pick $\ell$ (for leaf) one of these vertices  uniformly at random and reveal its neighbor $v$ in the graph. Now, in the Karp--Sipser algorithm this vertex is ``destroyed'' so we shall erase $v$ from the configuration \emph{as well as the connections \rev{it} has with other vertices of the graph}. More precisely, we reveal the neighbors of $v$ in $\mathrm{CM}(  \mathbf{d}^{n})$ and erase all the connections we create when doing so. In particular, if $v$ is connected to a vertex $w \ne \ell$ of unmatched degree $d$ via $i$ edges, then after the operation $w$ becomes a vertex of unmatched degree $d-i$. After that, the vertices of unmatched degree $0$ are simply removed. We listed all $13$ combinatorial  possibilities (recall that our vertices have degree $1,2$ or $3$) in Figure \ref{fig:table}. The stopping time of the algorithm is  $$\theta^{n} := \inf \{ k \geq 0 : X^{n}_k =0\}.$$ Finally, we extend the process $(X^n,Y^n,Z^n)$ to any $k$ by setting $(X^n_k,Y^n_k,Z^n_k)=(X^n_{\theta^n},Y^n_{\theta^n},Z^n_{\theta^n})$ for $k \geq \theta^n$. We denote by $(\mathcal{F}_k)_{k \geq 0}$ the natural filtration  generated by this exploration. The starting point of our investigations is the following.
	\begin{proposition}\label{prop:markovexplo} The process $(X^n_k, Y^n_k,Z^n_k)_{0 \leq k \leq \theta^n}$ is a Markov process whose \rev{transition probabilities} are described in Figure \ref{fig:table}. Furthermore, for any stopping time $\tau$, the remaining pairing of the unmatched edges conditionally on $\mathcal{F}_{\tau}$ is uniform.
	\end{proposition}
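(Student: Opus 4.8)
The plan is to reduce everything to the classical \emph{principle of deferred decisions} for the configuration model, which I would first isolate as a lemma. Let $M$ be a uniform perfect matching of a finite set $\mathcal{H}$ of half-edges, and suppose $h_1,h_2,\dots$ is a sequence of half-edges such that each $h_{j+1}$ is a function of the pairs $\{h_1,M(h_1)\},\dots,\{h_j,M(h_j)\}$ revealed so far together with some extra randomness independent of $M$, and such that $h_{j+1}$ is always one of the not-yet-matched half-edges. Then for each $j$, conditionally on the data $\{h_1,M(h_1)\},\dots,\{h_{j-1},M(h_{j-1})\}$ and on $h_j$, the partner $M(h_j)$ is uniform among the half-edges outside $\{h_1,M(h_1),\dots,h_{j-1},M(h_{j-1}),h_j\}$; moreover, conditionally on the set of half-edges matched so far, the restriction of $M$ to the remaining half-edges is again a uniform perfect matching. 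This is proved by a one-line induction on $j$ using invariance of the law of $M$ under permutations of $\mathcal{H}$ (see e.g. \cite{RemcoRGII}).

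Next I would run this along the Karp--Sipser exploration. When $X^n_k>0$, choosing a uniform vertex $\ell$ of unmatched degree $1$ amounts to choosing a uniform unmatched half-edge among those carried by such vertices (each carries exactly one), a choice that is a function of $\mathcal{F}_k$ and of extra randomness independent of the matching; so by the lemma its partner is uniform among the $S^n_k-1$ other unmatched half-edges, landing on some vertex $v$. Still inside the transition $k\to k+1$, one reveals one by one the partners of the at most two remaining unmatched half-edges of $v$; at each of these sub-steps the half-edge being queried is determined by what has already been exposed, so the lemma applies again and the new partner is uniform among the currently unmatched half-edges (possibly another half-edge of $v$ itself, producing a loop, or a previously hit neighbour, producing a multiple edge). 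Once $v$'s connections are fully exposed, one decreases the unmatched degrees of the hit vertices, discards those reaching unmatched degree $0$, and reads off $(X^n_{k+1},Y^n_{k+1},Z^n_{k+1})$. The law of each revealed partner, hence of the resulting increment of $(X^n,Y^n,Z^n)$, depends only on how many unmatched half-edges remain and how they split among vertices of unmatched degree $1,2,3$ --- data contained in $(X^n_k,Y^n_k,Z^n_k)$ and in the partners revealed earlier in the same step --- so the conditional law of $(X^n_{k+1},Y^n_{k+1},Z^n_{k+1})$ given $\mathcal{F}_k$ is a function of $(X^n_k,Y^n_k,Z^n_k)$ alone. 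That is the Markov property, and the explicit transitions then come out of a finite enumeration over the unmatched degree of $v$ (namely $1$, $2$ or $3$) and the combinatorial type of its surplus half-edges, which reproduces the $13$ cases of Figure~\ref{fig:table}.

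Finally, the claim for a stopping time $\tau$ is purely formal. For deterministic $k$, iterating the lemma over the finitely many pairs revealed during the first $k$ transitions shows that, conditionally on $\mathcal{F}_k$, the restriction of the global matching to the still-unmatched half-edges is a uniform perfect matching. For a stopping time $\tau$ one conditions on each event $\{\tau=k\}\in\mathcal{F}_k$: enlarging the conditioning by this event keeps the remaining matching uniform, and summing over $k$ yields the statement for $\mathcal{F}_\tau$. I expect the only laborious part of the whole argument to be the bookkeeping in the enumeration behind Figure~\ref{fig:table}, together with checking that the deferred-decisions lemma genuinely covers the degenerate within-step pictures (loops at $v$ and multiple edges emanating from $v$); neither presents a real difficulty.
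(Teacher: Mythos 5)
Your argument is correct and is exactly the standard exploration/deferred-decisions reasoning that the paper's one-line proof invokes ("the exploration is Markovian and preserves the uniformity of the remaining pairing"); you have simply written out the details the paper leaves implicit, including the within-step degeneracies and the routine passage from deterministic times to stopping times.
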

	\begin{proof} \rev{In words, the proposition says that we can construct the random graph $ \mathrm{CM}( \mathbf{d}^n)$ \textit{at the same time as} we perform the leaf-removal algorithm to reveal its Karp--Sipser core. This is a classical idea in random graph theory, but since this is the crux of the approach, we give a few details. Imagine that the degree sequence $ \mathbf{d}^n$ is fixed and label arbitrarily  the $n=1 d_1^n + 2 d_2^n + 3 d_3^n$ legs (half-edges) incident to the $d_1^n + d_2^n + d_3^n$ vertices. Consider then a uniform matching $ \mathcal{M}$, i.e. an involution of $\{1,2, ... , n\}$ without fixed points chosen  uniformly among the $ (n-1)!!$ possibilities. The graph $ \mathrm{CM}( \mathbf{d}^n)$ is then obtained by pairing the legs according to the matching $ \mathcal{M}$. We will now use repeatedly the following elementary fact: suppose that $I \in \{1,2, ... , n\}$ is a random index independent of $ \mathcal{M}$ and denote by $J \in \{1,2, ... , n\}$ its image/pair in $ \mathcal{M}$. Then $J$ is uniformly distributed over $\{1, ... ,n\} \backslash \{I\}$ and conditionally on $(I,J)$, the restriction of $ \mathcal{M}$ to $\{1, ... ,n\} \backslash \{I,J\}$ is (after relabeling) a uniform matching. In terms of the random graph $\mathrm{CM}( \mathbf{d}^n)$ this means that if we destroy the edge associated to a leg selected independently from the underlying matching, then conditionally on the remaining degree sequence $\mathbf{d}^{n-1}$, the resulting graph has law $ \mathrm{CM}(\mathbf{d}^{n-1})$. We can then iterate several times this property to perform one step of the leaf-removal in the Karp--Sipser algorithm: first delete the edge associated to a leg attached to a vertex of degree $1$ (picked arbitrarily but independently of $\mathcal{M}$) and then delete the edges associated to the legs of the possible neighbor revealed. Conditionally on the resulting degree sequence, the rest of the graph is still a configuration model and the probability transitions are indeed given by Figure \ref{fig:transitions}. The remaining properties immediately follow.  }
	\end{proof}
	\begin{figure}[!h]
		\begin{center}
			\includegraphics[width=16cm]{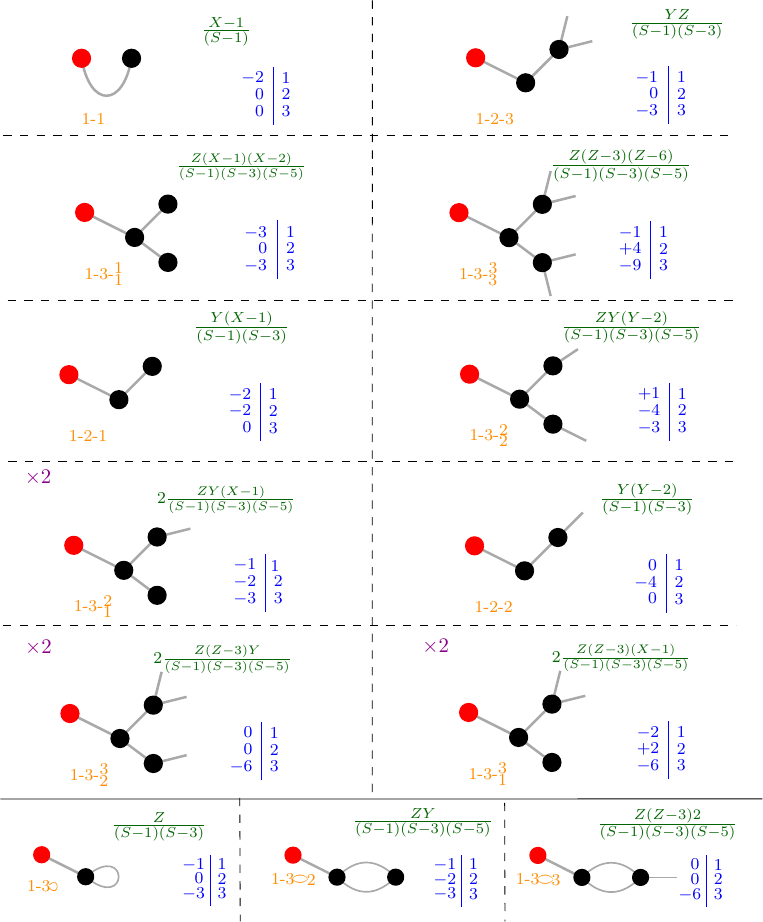}
			\caption{Transitions probabilities of the Markov chain $(X^{n}, Y^{n}, Z^{n})$: as long as $X^{n}>0$, a vertex $\ell$ of degree $1$ (in red above) is picked and its neighbor $v$ is revealed. The vertices $\ell,v$ are then removed from the configuration model as well as the connections they created. The probability of each event is indicated in green in the upper right corner \rev{and we recall that $S=X+Y+Z$}. The variation of $X,Y,Z$ are displayed in blue. A symmetry factor is indicated when needed in purple in the upper left corner. Notice in particular that the last three cases on the bottom have probabilities of smaller order $O(1/S)$, so they will not participate to the large scale limit. \label{fig:transitions}\label{fig:table}}
		\end{center}
	\end{figure}

	In particular, notice that at the stopping time $\theta^n$, the graph made by pairing the remaining unmatched edges is precisely the Karp--Sipser core of $ \mathrm{CM}(  \mathbf{d}^n)$ and so the second part of Theorem \ref{thm:maincritical} is already proved. 
		
	\section{Phase transition  \emph{via} fluid limit of the Markov chain}
	
	In this section, we prove Theorem \ref{thm:phasetransition}. The main ingredient is a deterministic fluid limit result for the  Markov chain $(X^{n},Y^{n},Z^{n})$. 
	
	\subsection{Fluid limit for the Markov chain}
For a process indexed by discrete time $(   \mathfrak{H}_k : k \geq 0)$ we use the notation $\Delta \mathfrak{H}_k = \mathfrak{H}_{k+1}- \mathfrak{H}_k$ for $k \geq 0$.	Given the \rev{transition probabilities} of the Markov chain $(X^{n}, Y^{n}, Z^{n})$ the following should come as no surprise.
	\begin{proposition}[Fluid limit] \label{prop:fluid-limit}Suppose that $ \mathbf{d}^{n}=(d_{1}^{n}, d_{2}^{n}, d_{3}^{n})$ satisfies \eqref{eq:convprop}. Then we have the following convergence in probability for the uniform norm:
		\begin{equation}\label{eq:fluid_limit}
		\left(\frac{X^n_{ \lfloor tn \rfloor}}{n}, \frac{Y^n_{ \lfloor tn \rfloor}}{n}, \frac{Z^n_{ \lfloor tn \rfloor}}{n}\right)_{0 \leq t \leq \theta^{n}/n} \xrightarrow[n\to\infty]{ (\mathbb{P})} ( \mathscr{X}(t), \mathscr{Y}(t), \mathscr{Z}(t))_{0 \leq t \leq t_{\ext}},
		\end{equation}
		where $( \mathscr{X}, \mathscr{Y}, \mathscr{Z})$ is the unique solution\footnote{More precisely, by \emph{solution}, we mean that $\left( \mathscr{X}, \mathscr{Y}, \mathscr{Z} \right)$ is a continuous function from $[0,t_{\ext}]$ to $\R^3$ such that $\mathscr{X}$ first hits $0$ at time $t_{\ext}$ and $\left( \mathscr{X}'(t), \mathscr{Y}'(t), \mathscr{Z}'(t) \right) = \phi \left( \mathscr{X}(t), \mathscr{Y}(t), \mathscr{Z}(t) \right)$ for all $0 \leq t < t_{\ext}$.} to the differential equation $(
		\mathscr{X}', \mathscr{Y}', \mathscr{Z}')
		= \phi( 
		\mathscr{X}, \mathscr{Y}, \mathscr{Z})$ with $\phi$ defined below \eqref{eqn_diff_system_primo} with initial conditions $ (p_{1}, p_{2}, p_{3})$ and where $t_{\ext}$ is the first hitting time of $0$ by the continuous process $ \mathscr{X}$. Moreover, 
		$ \theta^{n}/n \to t_{\ext}$ in probability as $n \to \infty$.
	\end{proposition}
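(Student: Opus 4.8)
The plan is to apply the differential equation method of Wormald \cite{wormald1995differential} to the Markov chain $(X^n, Y^n, Z^n)$, whose one-step transitions are read off from Figure \ref{fig:table}. First I would compute the conditional drift. From the transition table, on the event $\{X^n_k = x, Y^n_k = y, Z^n_k = z\}$ with $s = x+y+z$, the increments $\E[\Delta X^n_k \mid \mathcal F_k]$, $\E[\Delta Y^n_k \mid \mathcal F_k]$, $\E[\Delta Z^n_k \mid \mathcal F_k]$ are explicit rational functions of $(x/s, y/s, z/s)$ up to errors of size $O(1/s)$ coming from the last three rows of the table (the ones with an extra symmetry factor and probability $O(1/S)$). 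Writing $\phi = (\phi_1, \phi_2, \phi_3)$ for the resulting vector field on the simplex-like region $\{(a,b,c) : a,b,c \geq 0,\ a > 0\}$, one checks that $\phi$ is Lipschitz on any compact subset of $\{a > 0\}$, so the ODE $(\mathscr X', \mathscr Y', \mathscr Z') = \phi(\mathscr X, \mathscr Y, \mathscr Z)$ with initial condition $(p_1, p_2, p_3)$ has a unique maximal solution, and $t_{\ext}$ is well-defined as its first hitting time of $\{a = 0\}$ (finiteness of $t_{\ext}$ follows because $\phi_1$ is bounded away from $0$ — or at least stays negative enough — near the relevant part of the trajectory; if $p_1 = 0$ one starts the clock after the first step, noting $X^n_1 > 0$ with high probability, or simply observes $t_{\ext} = 0$ in degenerate cases).

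The core of the argument is then the standard concentration step. Fix a small $\delta > 0$ and restrict attention to times $k \leq \theta^n_\delta := \inf\{k : X^n_k \leq \delta n\}$, on which $S^n_k$ stays of order $n$ and the drift $\phi$ is Lipschitz with constant depending only on $\delta$. On this range the increments are bounded by an absolute constant, the conditional expectations match $\phi(X^n_k/n, Y^n_k/n, Z^n_k/n)$ up to $O(1/n)$, and Wormald's theorem (or a direct Azuma--Hoeffding argument on the martingale parts $M^X_k = X^n_k - X^n_0 - \sum_{j<k}\E[\Delta X^n_j \mid \mathcal F_j]$, etc.) gives
\[
\sup_{0 \leq t \leq \theta^n_\delta/n} \left| \frac{X^n_{\lfloor tn\rfloor}}{n} - \mathscr X(t) \right| + (\text{same for } Y, Z) \xrightarrow[n\to\infty]{(\mathbb P)} 0.
\]
To upgrade this to the interval $[0, t_{\ext}]$ I would let $\delta \to 0$: near $t_{\ext}$ the limit trajectory has $\mathscr X(t) \to 0$, so for $t$ within $\eta$ of $t_{\ext}$ the ODE solution already satisfies $\mathscr X(t) < \delta$ for $\delta = \delta(\eta)$ small; combined with the convergence up to $\theta^n_\delta/n$ and the fact that $X^n$ decreases by at most $O(1)$ per step, one concludes $\theta^n/n \to t_{\ext}$ in probability and that the uniform convergence holds on all of $[0, t_{\ext}]$. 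Here one uses that $\mathscr X$ hits $0$ ``transversally enough'' (this is exactly the place where the finer analysis of later sections is needed in the \emph{critical} case, but for Proposition \ref{prop:fluid-limit} a crude statement suffices, since we only claim convergence in probability without a rate).

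The main obstacle is the behaviour near extinction, i.e. near $\{X = 0\}$, where $\phi$ ceases to be Lipschitz (division by $s$ is fine but the combinatorial probabilities involve $x/s$ which is small, and more importantly the number of leaves is small so fluctuations are comparatively large). For the qualitative statement of Proposition \ref{prop:fluid-limit} this is handled softly by the $\delta \to 0$ limiting procedure above — we never need to track the chain once $X^n \leq \delta n$, because the contribution of the remaining $O(\delta n)$ steps to $\sup$-distance is itself $O(\delta)$ and the limit path closes up the gap as $\delta \to 0$. (The genuinely delicate multi-scale control of this regime is precisely the subject of Theorem \ref{thm:maincritical} and is deferred to the later sections.) A secondary point to be careful about is that $\theta^n$ could in principle be much smaller than $t_{\ext} n$ if the chain fluctuates down to $X^n = 0$ early; this is ruled out on $[0, t_{\ext} - \eta]$ by the concentration estimate, since there $\mathscr X$ is bounded below.
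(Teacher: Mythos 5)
Your overall strategy (Wormald's differential equation method on the region where the drift is Lipschitz, then a limiting argument $\delta \to 0$) is the same as the paper's, and the concentration step and the lower bound $\liminf \theta^n/n \geq t_{\ext}$ (no early extinction on $[0,t_{\ext}-\eta]$ because $\mathscr{X}$ is bounded below there) are fine. The genuine gap is in the upper bound, i.e.\ in showing that the chain actually dies shortly after $t_{\ext}n$. You assert that once $X^n$ drops below $\delta n$ only ``$O(\delta n)$ remaining steps'' are left, and elsewhere that $\mathscr{X}$ hits $0$ ``transversally enough''. Neither claim is justified, and neither is true as stated. First, $X^n$ is not monotone: removing a leaf's neighbour turns adjacent degree-$2$ vertices into new leaves, so $X^n$ can be replenished. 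In the supercritical case $\mathscr{S}(t_{\ext})>0$, so at time $\theta^n_\delta$ there are still $\Theta(n)$ unmatched half-edges, and nothing in your argument prevents the chain from keeping $X^n$ positive for a further macroscopic time. Second, the transversality heuristic fails exactly in the critical case (the case the paper is about): there $\mathscr{X}(t_{\ext}-\eps)\sim 3\eps^2$, so $\mathscr{X}$ hits $0$ with zero derivative.

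What is needed (and what the paper supplies) is a case analysis for the upper bound on $\theta^n$. If $\mathscr{S}(t_{\ext})=0$ (subcritical and critical cases), one uses the deterministic fact that $S^n$ decreases by at least $2$ at every step: just before $t_{\ext}n$ one has $S^n = o(n)$ with high probability, hence at most $o(n)$ further steps are possible. If $\mathscr{S}(t_{\ext})>0$ (supercritical case), one shows that along the limiting trajectory the first coordinate of $\phi$ stays bounded above by a negative constant (e.g.\ via $\phi_X \leq -\mathbf{y}\mathbf{z}(1-\mathbf{y})$ together with a lower bound on $\mathscr{Z}$ coming from $\mathscr{Z}'\geq -c\,\mathscr{Z}$, and $\mathbf{y}$ bounded away from $1$), so that the conditional drift of $X^n$ is uniformly negative near the end; a weak law of large numbers then forces $X^n$ to hit $0$ within $o_{\P}(n)$ additional steps. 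Without one of these two mechanisms, the convergence $\theta^n/n \to t_{\ext}$ — and hence the uniform convergence on the random interval $[0,\theta^n/n]$ claimed in the proposition — does not follow from the concentration estimate alone.
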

	
	\begin{proof} It is a standard application of the differential equation method. Indeed, the increments of the Markov chain $(X^n,Y^n,Z^n)$ are bounded and using the exact \rev{transition probabilities} (Figure \ref{fig:table}), the conditional expected drifts 
		\[\E \left[ \Delta X^n_k, \Delta Y^n_k, \Delta Z^n_k \, | \,  \mathcal{F}_k \right]\]
		converge for large values of $n$ towards $ \phi \left( \frac{X^n_k}{n}, \frac{Y^n_k}{n}, \frac{Z^n_k}{n} \right)$ where the function $\phi$ is defined by
		\begin{equation}\label{eqn_diff_system_primo}
		\phi\left(\begin{array}{c}
		\mathscr{X}\\ \mathscr{Y}\\ \mathscr{Z}
		\end{array}\right) = \left( \begin{array}{c}
		-2 \mathbf{x} - \mathbf{y} \mathbf{z} - 3 \mathbf{x}^2 \mathbf{z} - 2 \mathbf{y} \mathbf{x} + \mathbf{z} \mathbf{y}^2 - 2 \mathbf{z} \mathbf{x} \mathbf{y} - 
		\mathbf{z}^3 - 4 \mathbf{z}^2 \mathbf{x}\\

		4 \mathbf{z}^3 - 2\mathbf{x} \mathbf{y} - 4 \mathbf{z} \mathbf{y}^2 - 4 \mathbf{x} \mathbf{y} \mathbf{z} - 4 \mathbf{y}^2 + 4 \mathbf{z}^2 \mathbf{x}\\
		
		- 3\mathbf{y} \mathbf{z} -3 \mathbf{z} \mathbf{y}^2 - 12 \mathbf{z}^2 \mathbf{y} -3 \mathbf{z} \mathbf{x}^2 - 6 \mathbf{x} \mathbf{y} \mathbf{z} - 
		12 \mathbf{z}^2 \mathbf{x} - 9 \mathbf{z}^3
		\end{array}\right) ,  \end{equation}
		\begin{equation}  \mbox{with } \mathscr{S}:= \mathscr{X}+ \mathscr{Y} + \mathscr{Z}  \mbox{ and where }   \left(\begin{array}{c}
		\mathbf{x}\\ \mathbf{y}\\ \mathbf{z }
		\end{array}\right) :=  \frac{1}{ \mathscr{S}}\left(\begin{array}{c}
		\mathscr{X}\\ \mathscr{Y}\\ \mathscr{Z}
		\end{array}\right) \mbox{ is the proportion vector}.
		\end{equation}
	For any $ \delta>0$, the convergence of the conditional expected  drifts to $\phi$ is uniform on  $\{ n^{-1}\cdot S^n \geq \delta\}$  and $ (x,y,z) \mapsto \phi(x,y,z)$ is  Lipschitz on $\{(x,y,z) \in \mathbb{R}_+^3 : \delta^{-1} \geq x+y+z \geq \delta\}$  as $\nabla \phi (x,y,z)$ is of the form $\frac{P(x,y,z)}{(x+y+z)^4}$, where $P$ is a polynomial. Therefore, by~\cite[Theorem 1]{wormald1995differential}, the equation $(
		\mathscr{X}', \mathscr{Y}', \mathscr{Z}')
		= \phi( \mathscr{X}, \mathscr{Y}, \mathscr{Z})$ with initial condition $(p_1,p_2,p_3)$ has a unique solution until the time $t_{\ext}^{\delta}$ where $\mathscr{X}$ first hits $\delta$, and the convergence~\eqref{eq:fluid_limit} holds for $0 \leq t \leq t_{\ext}^{\delta}$. Moreover, let $t_{\ext}=\lim_{\delta \to 0} t_{\ext}^{\delta}$. Since $\phi$ is bounded by an absolute constant, the solution $\left( \mathscr{X}, \mathscr{Y}, \mathscr{Z} \right)$ is Lipschitz on $[0,t_{\ext})$, so we can extend it uniquely in a continuous way to $[0,t_{\ext}]$, and by continuity $t_{\ext}$ is indeed the first time where $\mathscr{X}$ hits $0$.		We know that~\eqref{eq:fluid_limit} holds on every compact subset of $[0,t_{\ext})$.	Moreover, the increments of $(X^n,Y^n,Z^n)$ are bounded by an absolute constant, so the functions $n^{-1}\cdot (X^n,Y^n,Z^n)$ are uniformly Lipschitz and the previous convergence extends to a uniform convergence on $[0,t_{\ext}]$.
		
		We now only need to check that $\frac{\theta^n}{n}$ converges in probability to $t_{\ext}$. We notice that deterministically, if $k < \theta^n$, then $S^n_{k+1} \leq S^n_k-2$, which implies $\theta^n \leq n$, so up \rev{taking $n$ in some subsequence,} we may assume that $\frac{\theta^n}{n}$ converges to some random variable $\widetilde{t}_{\ext}$. By convergence of the process and the definition of $t_{\ext}$, it is immediate that $\widetilde{t}_{\ext} \geq t_{\ext}$. For the other direction, we treat two cases separately:
		\begin{itemize}
			\item if $\mathscr{S}(t_{\ext})=0$, then let $\eps>0$, and let $\delta>0$ be such that $\mathscr{S}(t_{\ext}-\delta)<\eps$. With probability $1-o(1)$ as $n \to +\infty$, we have $S^n_{\lfloor( t_{\ext}-\delta)n\rfloor}<2\eps n$. Since $S^n$ decreases by at least two at each step, this implies $\theta^n \leq ( t_{\ext}-\delta)n  + \eps n$, so $\widetilde{t}_{\ext} \leq t_{\ext}$.
			\item if $\mathscr{S}(t_{\ext})>0$, we first argue that the first component of $\phi \left( \mathscr{X}, \mathscr{Y}, \mathscr{Z} \right)$ remains bounded from above by a negative constant along the whole trajectory. Indeed, since $\mathscr{S}$ is bounded from below, we have $\mathscr{Z}' \geq - c \mathscr{Z}$ for some constant $c$ along the trajectory. Hence $\mathscr{Z}$ is bounded from below by a positive constant on $[0,t_{\ext}]$, so $\mathbf{y}$ is bounded away from $1$. Since the first component of $\phi \left( \mathscr{X}, \mathscr{Y}, \mathscr{Z} \right)$ is at most $-\mathbf{y}\mathbf{z}+\mathbf{y}^2\mathbf{z}=-\mathbf{y}\mathbf{z}(1-\mathbf{y})$, this proves our claim. Therefore, with high probability, the conditional expected drift $\E \left[ \Delta X^n_k \, | \, X^n_k \right]$ is also bounded from above by a negative constant $-c$ along the trajectory. Since the increments are bounded, by the weak law of large numbers this ensures $\widetilde{t}_{\ext} \leq t_{\ext}^{\eps}+\frac{1}{c}\eps$ for all $\eps>0$, so $\widetilde{t}_{\ext}=t_{\ext}$.
		\end{itemize}	
	\end{proof}
	
			\begin{figure}[!h]
		\begin{center}
			\includegraphics[width=5.5cm]{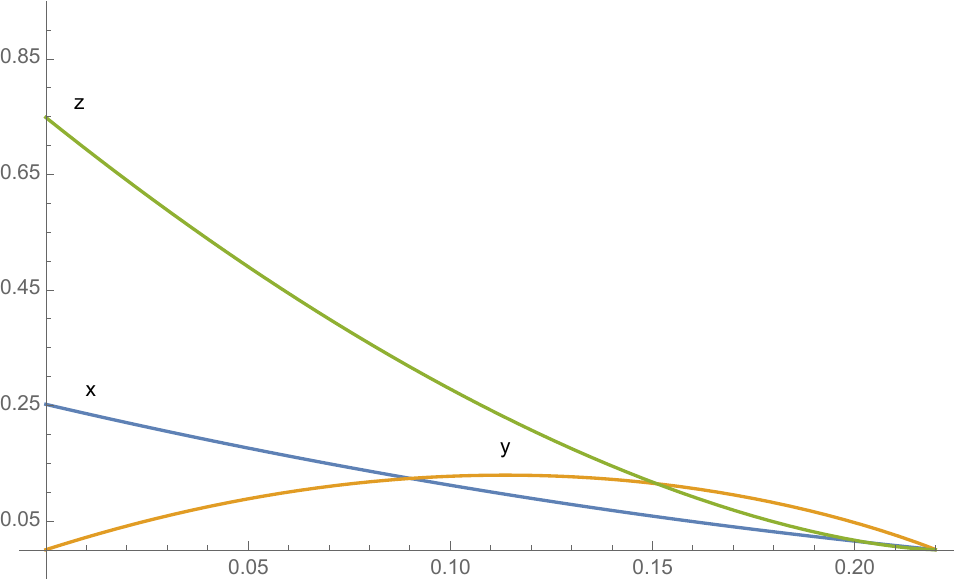}
			\includegraphics[width=5.5cm]{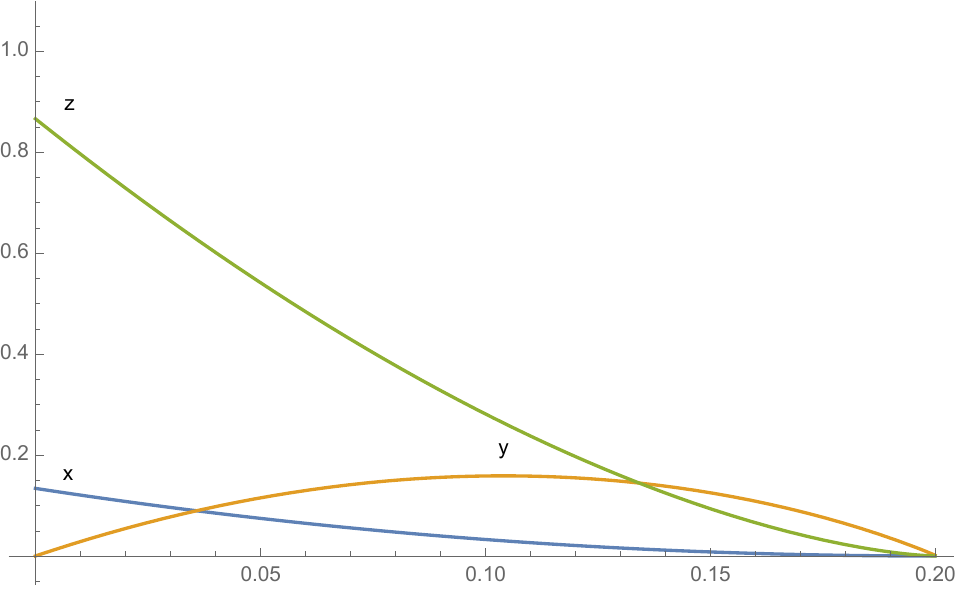}
			\includegraphics[width=5.5cm]{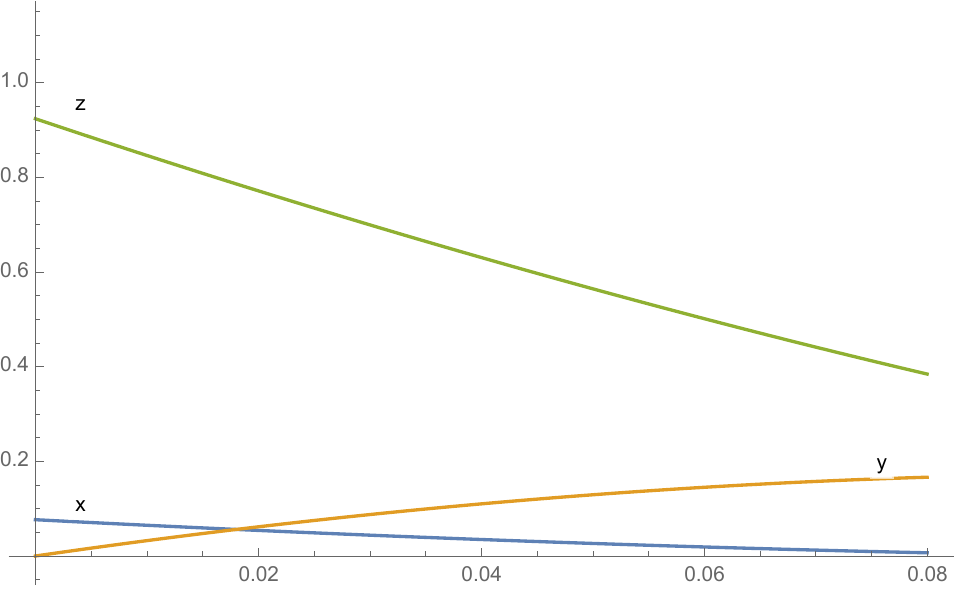}
			\caption{Illustration of the \rev{solution} $(\mathscr{X},\mathscr{Y},  \mathscr{Z})$ \rev{to the differential system} in terms of number of legs \rev{attached to vertices of degree $1$, $2$ and $3$} in the  subcritical (left), critical (center) and supercritical (right) cases.}\label{fig:equadiff_graphs}
		\end{center}
	\end{figure}
	\subsection{Solving the differential equation}
	
	In this section, our goal will be to gather information about the solutions to~\eqref{eqn_diff_system_primo}, which will give Theorem~\ref{thm:phasetransition} and be an important tool in the proof of Theorem~\ref{thm:maincritical}. As indicated by the system \eqref{eqn_diff_system_primo}, we will see that the solutions are easier to express in terms of proportions. We refer to Figures~\ref{fig:equadiff_graphs} and~\ref{fig:equadiff_trajectories} for a visualization of the trajectories of these solutions.
	
	\begin{proposition}\label{prop:solutions_equadiff}
		We fix $p_1>0$ and $p_2, p_3 \geq 0$ with $p_1+p_2+p_3=1$. Let $\left( \mathscr{X}(t), \mathscr{Y}(t), \mathscr{Z}(t) \right)_{0 \leq t \leq t_{\ext}}$ be the solution to~\eqref{eqn_diff_system_primo} with initial condition $(p_1,p_2,p_3)$.  Recall from \eqref{eq:defTheta} the definition $$\Theta = (p_3- p_1)^2 - 4 p_1 \in [-3,1].$$
		\begin{itemize}
			\item If $ \Theta <0$ (subcritical case
			), then $\mathscr{X}(t_{\ext})=\mathscr{Y}(t_{\ext})=\mathscr{Z}(t_{\ext})=0$. Moreover, for $t<t_{\ext}$ sufficiently close to $t_{\ext}$, we have $\mathscr{Z}(t)<\mathscr{X}(t)$.
			\item If $ \Theta >0$ (supercritical case
		), then 
			\begin{align}\label{eq:YZ_extinction_equadiff} \mathscr{X}(t_{\ext})=0, \quad 
			\mathscr{Y}(t_{\ext}) = \frac{\Theta }{p_3^2} \left( 1-\sqrt{\Theta} \right) >0, \quad \mbox{ and }  \quad 
			\mathscr{Z}(t_{\ext}) = \frac{\Theta}{p_3^2}\sqrt{\Theta} >0.
			\end{align}
			\item If $ \Theta =0$ (critical case), then $\mathscr{X}(t_{\ext})=\mathscr{Y}(t_{\ext})=\mathscr{Z}(t_{\ext})=0$, and more precisely as $\eps \to 0$:
			\begin{equation}\label{eqn_fluidlimit_near_end}
			\left\{ \begin{array}{rcl}
			\mathscr{X}(t_{ \mathrm{ext}}- \varepsilon) &\sim& 3 \varepsilon^{2}, \\
			\mathscr{Y}(t_{ \mathrm{ext}}- \varepsilon) &\sim& 4 \varepsilon,\\
			\mathscr{Z}(t_{ \mathrm{ext}}- \varepsilon) &\sim& 4 \sqrt{3}  \varepsilon^{3/2}.\end{array}\right.
			\end{equation}
		\end{itemize}
	\end{proposition}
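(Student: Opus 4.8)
\emph{Plan.} The key point is that, although \eqref{eqn_diff_system_primo} is a quartic polynomial system, it becomes \emph{linear} after passing to proportions and performing a well-chosen time change; every assertion of the Proposition then follows from an explicit closed form. \emph{Step 1 (linearization).} Write $\mathbf{x}=\mathscr{X}/\mathscr{S}$, $\mathbf{y}=\mathscr{Y}/\mathscr{S}$, $\mathbf{z}=\mathscr{Z}/\mathscr{S}$. Since $\phi$ is a function of the proportions only (homogeneous of degree $0$), summing the three components of $\phi$ and repeatedly using $\mathbf{x}+\mathbf{y}+\mathbf{z}=1$ collapses the higher-order terms into the clean identity $\mathscr{S}'=-(2\mathbf{x}+4\mathbf{y}+6\mathbf{z})$. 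Differentiating $\mathbf{x}=\mathscr{X}/\mathscr{S}$ gives $\mathscr{S}\,\dot{\mathbf{x}}=\phi_{1}+\mathbf{x}(2\mathbf{x}+4\mathbf{y}+6\mathbf{z})$, whose right-hand side simplifies (again using $\mathbf{x}+\mathbf{y}+\mathbf{z}=1$) to $\mathbf{z}(\mathbf{x}-\mathbf{z})$; similarly for $\mathbf{y},\mathbf{z}$. Reparametrizing time by $\mathrm{d}s=\mathscr{Z}\,\mathscr{S}^{-2}\,\mathrm{d}t$ (so $\mathrm{d}s=\tfrac{\mathbf{z}}{\mathscr{S}}\mathrm{d}t$) then turns the evolution of $(\mathbf{x},\mathbf{y},\mathbf{z})$ into the constant-coefficient linear system
\[
\dot{\mathbf{x}}=\mathbf{x}-\mathbf{z},\qquad \dot{\mathbf{y}}=2\mathbf{y}+4\mathbf{z},\qquad \dot{\mathbf{z}}=-(\mathbf{x}+2\mathbf{y}+3\mathbf{z}),
\]
whose matrix has zero trace, zero sum of principal $2\times2$ minors, and zero determinant, hence is nilpotent with cube $0$. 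Therefore $(\mathbf{x},\mathbf{y},\mathbf{z})(s)$ is a triple of quadratics: $\mathbf{x}(s)=s^{2}+(p_{1}-p_{3})s+p_{1}$, $\mathbf{z}(s)=s^{2}-(p_{1}+2p_{2}+3p_{3})s+p_{3}$, and $\mathbf{y}=1-\mathbf{x}-\mathbf{z}$. Note that the discriminant of $s\mapsto\mathbf{x}(s)$ is exactly $\Theta$.

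\emph{Step 2 (back to $\mathscr{X},\mathscr{Y},\mathscr{Z}$; locating extinction).} Combining $\mathscr{S}'=-(2\mathbf{x}+4\mathbf{y}+6\mathbf{z})$ with the $\dot{\mathbf{z}}$-equation and the reparametrization yields $\frac{\mathrm{d}}{\mathrm{d}s}\log\mathscr{S}=2\frac{\mathrm{d}}{\mathrm{d}s}\log\mathbf{z}$, hence $\mathscr{S}(s)=\mathbf{z}(s)^{2}/p_{3}^{2}$ when $p_{3}>0$ (if $p_{3}=0$ then $\mathscr{Z}\equiv0$ and $\Theta<0$, a subcase dispatched by the same computation in dimension two). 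Consequently $\mathscr{X}=\mathbf{x}\mathbf{z}^{2}/p_{3}^{2}$, $\mathscr{Y}=\mathbf{y}\mathbf{z}^{2}/p_{3}^{2}$, $\mathscr{Z}=\mathbf{z}^{3}/p_{3}^{2}$, with $t(s)=p_{3}^{-2}\int_{0}^{s}\mathbf{z}(\sigma)\,\mathrm{d}\sigma$; one checks by differentiation that this indeed solves \eqref{eqn_diff_system_primo}, so by the uniqueness in Proposition~\ref{prop:fluid-limit} it is \emph{the} solution, valid up to the smallest $s_{*}>0$ at which $\mathscr{X}$ vanishes — that is, the smaller of the first positive root $s_{-}$ of $\mathbf{x}$ (real iff $\Theta\geq0$) and the first positive root $s_{z}$ of $\mathbf{z}$.

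\emph{Step 3 (the three regimes).} If $\Theta<0$ then $\mathbf{x}\geq-\Theta/4>0$, so $s_{*}=s_{z}$; each of $\mathscr{X},\mathscr{Y},\mathscr{Z}$ carries a factor $\mathbf{z}$, hence all vanish at $t_{\ext}$, while $\mathbf{z}(s_{z})=0<\mathbf{x}(s_{z})$ gives $\mathscr{Z}<\mathscr{X}$ near $t_{\ext}$. If $\Theta>0$, then using $s_{-}^{2}=(p_{3}-p_{1})s_{-}-p_{1}$ one computes $\mathbf{z}(s_{-})=(p_{3}-p_{1})-2s_{-}=\sqrt{\Theta}$; since $s_{-}\le\frac{p_{3}-p_{1}}{2}<\frac{p_{1}+2p_{2}+3p_{3}}{2}$ is left of the vertex of the parabola $\mathbf{z}$, the latter is positive on $[0,s_{-}]$, so $s_{*}=s_{-}$ and $(\mathbf{x},\mathbf{y},\mathbf{z})(s_{-})=(0,1-\sqrt{\Theta},\sqrt{\Theta})$; feeding this into the formulas of Step~2 yields \eqref{eq:YZ_extinction_equadiff}. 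If $\Theta=0$, then $\mathbf{x}$ has a double root $s_{0}$ at which also $\mathbf{z}(s_{0})=\sqrt{\Theta}=0$, so $s_{*}=s_{0}$ and the three limits vanish; for the refined estimate, put $u:=s_{0}-s\downarrow0$, so that $\mathbf{x}(s)=u^{2}$, $\mathbf{z}(s)\sim(s_{z}'-s_{0})u$ where $s_{z}'$ is the other root of $\mathbf{z}$ (in the case $p_{2}=0$ at hand, $s_{z}'-s_{0}=2$ and $p_{3}^{2}=3/4$), $\mathbf{y}(s)\to1$, and $t_{\ext}-t(s)=p_{3}^{-2}\int_{s}^{s_{0}}\mathbf{z}\sim\frac{s_{z}'-s_{0}}{2p_{3}^{2}}u^{2}$; solving for $u$ in terms of $\eps:=t_{\ext}-t$ and substituting into $\mathscr{X}=\mathbf{x}\mathbf{z}^{2}/p_{3}^{2}$, $\mathscr{Y}=\mathbf{y}\mathbf{z}^{2}/p_{3}^{2}$, $\mathscr{Z}=\mathbf{z}^{3}/p_{3}^{2}$ produces \eqref{eqn_fluidlimit_near_end}.

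\emph{Main difficulty.} The substance is entirely in Step~1 — guessing the change of variables that linearizes \eqref{eqn_diff_system_primo}, and verifying the (somewhat lengthy) polynomial identities behind $\mathscr{S}'$, the linear system, and $\mathscr{S}=\mathbf{z}^{2}/p_{3}^{2}$. Beyond that, the only genuinely delicate points are checking, in the supercritical case, that $\mathbf{z}$ does not reach $0$ before $\mathbf{x}$ (i.e.\ $s_{-}<s_{z}$), and carrying out the $s\leftrightarrow t$ change of variables near the double root in the critical case while tracking the correct constants; both are elementary once the explicit quadratics are in hand.
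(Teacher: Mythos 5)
Your Steps 1--2 are correct and take a genuinely different route from the paper. Where the paper passes to proportions, uses the time change $\mathrm{d}u=\mathrm{d}t/\mathscr{S}$, and solves the resulting quadratic autonomous system through a second-order ODE with $\sinh$-type solutions, you use the time change $\mathrm{d}s=\frac{\mathbf{z}}{\mathscr{S}}\,\mathrm{d}t$, which makes the proportion dynamics a nilpotent linear system, so that $\mathbf{x}(s)$ and $\mathbf{z}(s)$ are explicit monic quadratics (indeed $\ddot{\mathbf{x}}=\dot{\mathbf{x}}-\dot{\mathbf{z}}=2=\ddot{\mathbf{z}}$ along solutions); I checked the identities $\mathscr{S}'=-(2\mathbf{x}+4\mathbf{y}+6\mathbf{z})$, $\mathscr{S}\dot{\mathbf{x}}=\mathbf{z}(\mathbf{x}-\mathbf{z})$, $\mathscr{S}\dot{\mathbf{z}}=\mathbf{z}(-2+\mathbf{x}-\mathbf{z})$ and they agree with the paper's intermediate system. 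Your conservation law $\mathscr{S}=\mathbf{z}^2/p_3^2$ (equivalently $\mathscr{Z}^2/\mathscr{S}^3\equiv p_3^2$) is exactly the paper's relation \eqref{eq:integrale1ere}, and your $\Theta$ appearing as the discriminant of $\mathbf{x}(s)$ plays the role of the paper's conserved quantity $4(b^2-1)$ in \eqref{eq:conservation1}. The closing-the-loop move (check the parametric formulas solve \eqref{eqn_diff_system_primo}, invoke uniqueness) is legitimate, and your subcritical and supercritical location of $s_*$ (including $\mathbf{z}(s_-)=\sqrt{\Theta}>0$, hence $s_-<s_z$) is sound.

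The genuine gap is in Step 3, at the identification of the constants. Your own (correct) formulas give, in the supercritical case, $\mathscr{S}(t_{\ext})=\Theta/p_3^2$, hence $\mathscr{Y}(t_{\ext})=(1-\sqrt{\Theta})\,\Theta/p_3^2$ and $\mathscr{Z}(t_{\ext})=\Theta^{3/2}/p_3^2$; these coincide with the displayed \eqref{eq:YZ_extinction_equadiff} only when $4p_3^2=3+\Theta$, which (given $p_1+p_2+p_3=1$) happens exactly when $p_2=0$. Since $\mathscr{Z}^2/\mathscr{S}^3$ is conserved and equals $p_3^2$ at $t=0$, the values in \eqref{eq:YZ_extinction_equadiff} would force $p_3^2=\frac{3+\Theta}{4}$, so no completion of your argument can produce them when $p_2>0$: "feeding this into the formulas of Step 2 yields \eqref{eq:YZ_extinction_equadiff}" is simply not true in that generality. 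The same issue appears in the critical case, where your general computation gives $\mathscr{X}(t_{\ext}-\eps)\sim 4p_3^2\eps^2$, $\mathscr{Y}(t_{\ext}-\eps)\sim 4\eps$, $\mathscr{Z}(t_{\ext}-\eps)\sim 8p_3\,\eps^{3/2}$, and you silently restrict to ``the case $p_2=0$ at hand'' to get the constants $3$, $4$, $4\sqrt{3}$ of \eqref{eqn_fluidlimit_near_end} -- but the statement you are proving allows $p_2>0$. The paper deals with this by reducing to ``maximal solutions'' ($p_2=0$) via a time shift; note, however, that a time shift along the proportion trajectory does not preserve the normalization $\mathscr{S}(0)=1$ (it must be combined with the scaling invariance coming from the degree-zero homogeneity of $\phi$, which rescales $\mathscr{S}(t_{\ext})$), so the unnormalized extinction values genuinely depend on $p_3$ and not on $\Theta$ alone when $p_2>0$. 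You should therefore either state the restriction $p_2=0$ explicitly (this is the only case used later, since the critical sequence \eqref{eq:convcrit} has $d_{2,c}^n=0$), or keep your constants expressed through $p_3$; as written, your Step 3 claims the stated displays in a generality in which they do not follow from (and are contradicted by) your Step 2 formulas.
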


	\begin{figure}[!h]
		\begin{center}
			\includegraphics[width=15cm]{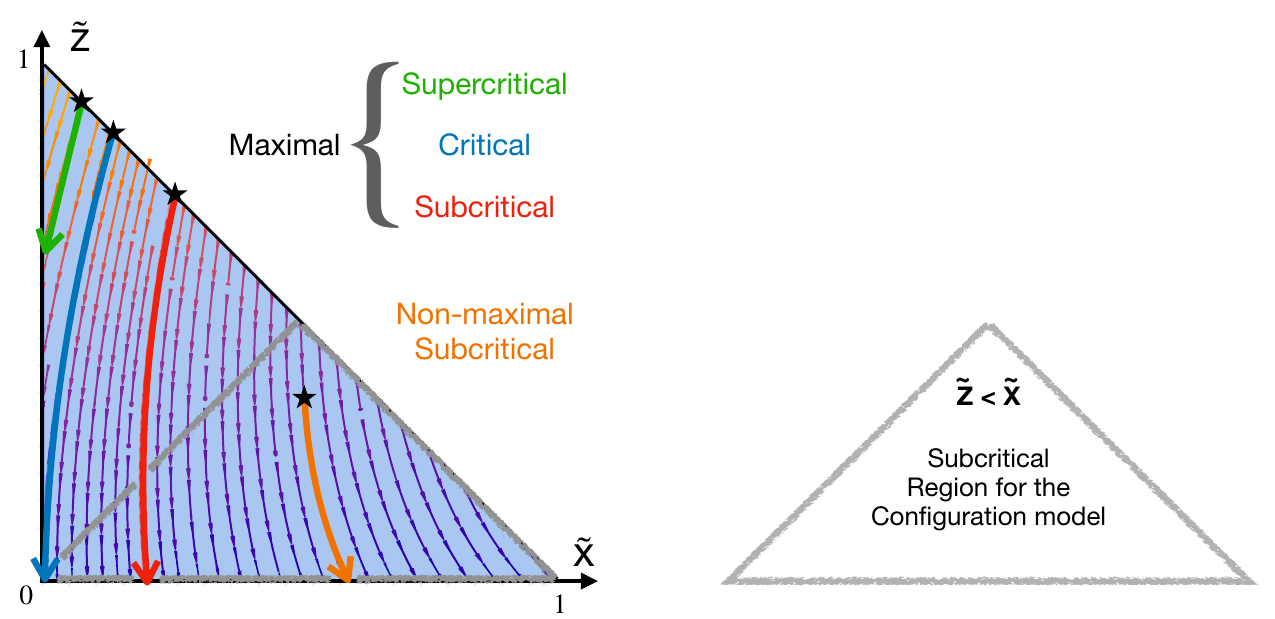}
			\caption{Illustration of the differential system $\tilde{ \mathbf{x}}, \tilde{ \mathbf{z}}$ with the vector field. The maximal solutions start from $ \tilde{ \mathbf{x}}(0)+ \tilde{ \mathbf{z}}(0) =1$. A maximal supercritical (resp.~critical, resp.~subcritical) solution is shown in green (resp.~blue, resp.~red). A non-maximal subcritical solution is displayed in orange. Note that any subcritical solution terminates in the gray region which is subcritical for the configuration model itself.}\label{fig:equadiff_trajectories}
		\end{center}
	\end{figure}

	\begin{proof}
		We will first obtain an explicit (up to time-change) solution to \eqref{eqn_diff_system_primo}. We recall that $ \mathscr{S}=  \mathscr{X}+ \mathscr{Y}+ \mathscr{Z}$ is the fluid limit of the sum process and that $ \mathbf{x}, \mathbf{y}, \mathbf{z}$ are the proportions whose sum is constant and equal to $1$.
 
		Using  $  \mathbf{y}= 1 - \mathbf{x}- \mathbf{z}$, the system \eqref{eqn_diff_system_primo} translates into the following system on $ \mathbf{x},\mathbf{z}$ and $ \mathscr{S}$:
		\begin{eqnarray} \label{eq:sysauto} \left\{ \begin{array}{rcl}
		\mathbf{x}' &=& \frac{1}{ \mathscr{S}} ( \mathbf{x}- \mathbf{z}) \mathbf{z},\\
		\mathbf{z}'&=& \frac{1}{ \mathscr{S}} (-2 + \mathbf{x} - \mathbf{z})  \mathbf{z},\\
		\mathscr{S}'&=& 2 (-2 + \mathbf{x}- \mathbf{z}),
		\end{array}\right.  \end{eqnarray}
		where again $ \mathscr{S}(0)=1$ and $\mathbf{x}(0), \mathbf{z}(0) \geq 0$ satisfy $\mathbf{x}(0)+\mathbf{z}(0) \leq 1$.
		
		In order to get rid of $\mathscr{S}$ in this system, we perform a time change: for $t \in [0,t_{\ext}]$, we write
		\[ \gamma(t)=\int_0^t \frac{\mathrm{d}s}{\mathscr{S}(s)} \in [0,+\infty].\]
		We also define the functions $\tilde{ \mathbf{x}}, \tilde{ \mathbf{y}}, \tilde{ \mathbf{z}}$ on $\left[ 0, u_{\ext}\right]$, with $u_{\ext}=\int_0^{t_{\ext}} \frac{\mathrm{d}s}{\mathscr{S}(s)}$, by $\tilde{ \mathbf{x}}(u)=\mathbf{x} \left( \gamma^{-1}(u) \right)$ and $\tilde{ \mathbf{z}}(u)=\mathbf{z} \left( \gamma^{-1}(u) \right)$. We obtain the system
		
		\begin{align*}
		\left\{ \begin{array}{rcl}
		\tilde{  \mathbf{x}}' &=&   (\tilde{  \mathbf{x}}-\tilde{  \mathbf{z}})\tilde{  \mathbf{z}},\\
		\tilde{  \mathbf{z}}' &=&   (-2+\tilde{  \mathbf{x}}- \tilde{  \mathbf{z}}) \tilde{  \mathbf{z}}.
		\end{array}\right.
		\end{align*}
		We find solutions to this system as follows: by subtracting the second line to the first one, we have $ \tilde{  \mathbf{x}}'-\tilde{  \mathbf{z}}'=2 \tilde{  \mathbf{z}}$ and the second line implies that $\tilde{  \mathbf{x}}-\tilde{  \mathbf{z}} = \left( \frac{ \tilde{  \mathbf{z}}'}{\tilde{  \mathbf{z}}}+2\right)$. Deriving the second identity and comparing, we deduce the following second-order non-linear one-dimensional differential equation:
		$$ 2 \left(\tilde{ \mathbf{z}}\right)^{3} =  \tilde{\mathbf{z}}'' \tilde{\mathbf{z}} - \left( \tilde{\mathbf{z}}'\right)^{2}.$$
		A complete family of solutions is given by
		\begin{eqnarray} \label{eq:solution}\left\{ \begin{array}{rcl}
		\tilde{ \mathbf{z}}(u) &=&   \displaystyle\frac{b^{2}}{\sinh(b(u+u_0))^{2}},\\
		\tilde{ \mathbf{x}}(u) &=&   \displaystyle \left(\frac{b}{\tanh(b(u+u_0))}-1\right)^{2}+1-b^{2},\\
		\tilde{\mathbf{y}}(u) &=& \displaystyle \frac{-2b^2}{\tanh^2 (b(u+u_0))}+\frac{2b}{\tanh (b(u+u_0))}+2b^2-1,
		\end{array}\right.  \end{eqnarray}
		where $b, u_0 \in \R$. We notice that along these solutions, the quantity $(\tilde{\mathbf{z}}-\tilde{\mathbf{x}})^2-4\tilde{\mathbf{x}}$ is constant, and is equal to $4(b^2-1)$, this quantity is equal to the $\Theta$ defined by \eqref{eq:defTheta}:
		  \begin{eqnarray} \label{eq:conservation1}  (\tilde{\mathbf{z}}-\tilde{\mathbf{x}})^2-4\tilde{\mathbf{x}} \equiv 4(b^2-1) = (p_3- p_1)^2-4 p_1  =  \Theta . \end{eqnarray}
		
		We also notice that $\tilde{\mathbf{y}}$ is always increasing and that $\tilde{\mathbf{y}}<0$ for $u$ small enough, which has no meaning in our context. Therefore, every solution is contained in a \emph{maximal} solution, i.e. a solution where the initial condition $(p_1, p_2, p_3)$ satisfies $p_2=0$. 
		From $\tilde{\mathbf{y}}(0)=0$, for such a maximal solution, we get
		\[ u_0=\frac{1}{2b} \log \left(1 + 2 b + \frac{2 b\sqrt{ ( 4 b^2-1)}}{ 2 b-1}\right)>0, \]
		so $p_1=1-\frac{1}{2}\sqrt{4b^2-1}$ and $p_3=\frac{1}{2}\sqrt{4b^2-1}$.
		
		We now come back to the true (non-necessarily maximal) solutions $(\mathscr{X},\mathscr{Y},\mathscr{Z})$ in each of the three cases of Proposition~\ref{prop:solutions_equadiff}. For this, we need to study the time change $\gamma:[0,t_{\ext}] \to [0,u_{\ext}]$. By definition of $\gamma$ and the third line of~\eqref{eq:sysauto}, for all $t \in [0,t_{\ext})$, we have
		\begin{align*} \left\{\begin{array}{rcl}
		\frac{1}{ \mathscr{S}(t)}&=&   \gamma'(t),\\
		\mathscr{S}'(t) &=&2(-2 + \tilde{ \mathbf{x}}(\gamma(t)) - \tilde{ \mathbf{z}}(\gamma(t)).
		\end{array}\right. \label{eq:dsdt}  \end{align*}
		Multiplying those lines and integrating both sides using the exact expressions of $ \tilde{ \mathbf{x}}$ and $ \tilde{ \mathbf{z}}$, we find $\frac{\mathrm{d}}{\mathrm{d}t} \log \mathscr{S}(t) = -4 \frac{\mathrm{d}}{\mathrm{d}t} \log \left( \sinh(b\cdot (\gamma(t)+u_0))\right)$ so the following quantity is constant:
		\begin{equation}
		\label{eq:integrale1ere} \mathscr{S}(t) \sinh^4 \left( b \cdot (\gamma(t)+u_0) \right) =  \mathscr{S}(t) \left( \frac{b^{2}}{ \mathbf{z}(t)}\right)^{2} = \frac{ b^4}{\tilde{ \mathbf{z}}(0)^2}= \frac{b^4}{p_3^2}.
		\end{equation}
		Note that this last equation, combined with the expression of $\mathscr{S}'(t)$, provides a differential equation satisfied by $\mathscr{S}$, from which we could express $\mathscr{S}$ as the inverse bijection of an explicit function. However, this will not be needed in the proof. Given those findings, the rest of the proof is made of easy calculations. Let us proceed. We refer to Figures \ref{fig:equadiff_graphs} and  \ref{fig:equadiff_trajectories} for visualization of the system in terms of proportions or in ``number of legs". 
		
		\paragraph{Subcritical regime.}
		For $\Theta < 0$, we have $\frac{1}{2}<b<1$. In this case, we observe that $\tilde{\mathbf{x}}(u) \geq 1-b^2$ is bounded away from $0$, so the same is true for $\mathbf{x}(t)$ on $[0,t_{\ext}]$. It follows that $\mathscr{S}(t_{\ext})=\frac{\mathscr{X}(t_{\ext})}{\mathbf{x}(t_{\ext})}=0$. Therefore, by~\eqref{eq:integrale1ere}, we have $\mathbf{z}(t_{\ext})=\sqrt{\left( \frac{4b^2-1}{4}\right) \mathscr{S}(t_{\ext})}=0$. In particular, for $t$ sufficiently close to $t_{\ext}$, we have $\mathbf{z}(t)<\mathbf{x}(t)$, so $\mathscr{Z}(t)<\mathscr{X}(t)$.  Note that this also implies $u_{\ext}=+\infty$.
		
		\paragraph{Supercritical regime.}
		For $\Theta >0$, we have $1<b<\frac{\sqrt{5}}{2}$. In this case, the function $\tilde{\mathbf{x}}$ first hits $0$ at time
		\[ \hat{u}_{\ext}=-u_0+\frac{1}{b}\mathrm{Arccoth} \frac{1+\sqrt{b^2-1}}{b}.\]
		This implies that $u_{\ext} \leq \hat{u}_{\ext}$. We claim that we have equality. Indeed, if this is not the case, we have $\mathbf{x}(t_{\ext})=\tilde{\mathbf{x}}(u_{\ext})>0$, so $\mathscr{S}(t_{\ext})=0$, so~\eqref{eq:integrale1ere} implies $\tilde{\mathbf{z}}(u_{\ext})=0$ with $u_{\ext}<+\infty$, which is not possible given the explicit expression of $\tilde{\mathbf{z}}$. Therefore, we have $\tilde{\mathbf{x}}(u_{\ext})=0$. Using \eqref{eq:conservation1} we can compute \[\mathbf{z}(t_{\ext})=\tilde{\mathbf{z}}(u_{\ext})=2\sqrt{b^2-1} \quad \mbox{and} \quad \mathbf{y}(t_{\ext})=1-2\sqrt{b^2-1}\]
		and finally, using~\eqref{eq:integrale1ere}:
		\[\mathscr{S}(t_{\ext})=\frac{4}{p_3^2} \tilde{\mathbf{z}}(u_{\ext})^2=\frac{4(b^2-1)}{p_3^2},\]
		which, once translated in terms of $\Theta$, gives~\eqref{eq:YZ_extinction_equadiff}.
		
		\paragraph{Critical regime.} For $\Theta =0$, the maximal solution starts from $p_3=\frac{\sqrt{3}}{2}$ and $p_1=1-\frac{\sqrt{3}}{2}$, and we have $b=1$. In particular, using $u_0 \geq 0$, we have $\tilde{\mathbf{x}}(u)>0$ for all $u \geq 0$. By the same argument as in the supercritical regime, this implies $u_{\ext}=+\infty$. Therefore, by the exact expression of $\tilde{\mathbf{x}}, \tilde{\mathbf{y}}, \tilde{\mathbf{z}}$, as $t \to t_{\ext}$, we have $\mathbf{x}(t), \mathbf{z}(t) \to 0$ and $\mathbf{y}(t) \to 1$. Therefore, by~\eqref{eq:integrale1ere} at $t=t_{\ext}$, we have $\mathscr{S}(t_{\ext})=0$, so $\mathscr{Y}(t_{\ext})=\mathscr{Z}(t_{\ext})=0$.
		
		Hence, letting $t \to t_{\ext}$ in the third equation of~\eqref{eq:sysauto}, we have $\mathscr{S}'(t) \to -4$ as $t \to t_{\ext}$, so ${\mathscr{S}(t_{\ext}-\eps) \sim 4\eps}$ as $\eps \to 0$. Injecting this in~\eqref{eq:integrale1ere}, we find $\mathbf{z}(t_{\ext}-\eps) \sim \sqrt{3\eps}$, so $\mathscr{Z}(t_{\ext}-\eps) \sim 4 \sqrt{3} \eps^{3/2}$. Finally, we know  from \eqref{eq:conservation1} that $(\mathbf{z}-\mathbf{x})^2-4\mathbf{x}$ is constant equal to $0$, so $\mathbf{x}(t_{\ext}-\eps) \sim \frac{1}{4} \mathbf{z}(t_{\ext}-\eps)^2 \sim \frac{3}{4} \eps$, which gives the asymptotics for $\mathscr{X}$.
	\end{proof}

	\subsection{Phase transition: proof of Theorem \ref{thm:phasetransition}}
	
	\paragraph{Subcritical regime.}
	We assume that $(p_1, p_2, p_3)$ is subcritical, and consider the associated solution $(\mathscr{X}, \mathscr{Y}, \mathscr{Z})$ to the differential equation. By Proposition~\ref{prop:solutions_equadiff}, let $t_1<t_{\ext}$ be such that $\mathscr{Z}(t_1)<\mathscr{X}(t_1)$. By Proposition~\ref{prop:fluid-limit}, we have
	\[ \frac{1}{n} \left( X^n_{\lfloor t_1 n\rfloor}, Y^n_{\lfloor t_1 n\rfloor}, Z^n_{\lfloor t_1 n\rfloor} \right) \xrightarrow[n \to +\infty]{(\P)} \left( \mathscr{X}(t_1), \mathscr{Y}(t_1), \mathscr{Z}(t_1)\right). \]
	Moreover, by Proposition~\ref{prop:markovexplo}, \rev{conditionally} on $\mathcal{F}_{\lfloor t_1 n\rfloor}$, the remaining graph after $\lfloor t_1 n\rfloor$ steps of the Karp--Sipser algorithm is a configuration model with respectively $X^n_{\lfloor t_1 n\rfloor}$, $Y^n_{\lfloor t_1 n\rfloor}$ and $Z^n_{\lfloor t_1 n\rfloor}$ half-edges belonging to vertices of degree $1$, $2$ and $3$. Since $ n^{-1}Z^n_{\lfloor t_1 n\rfloor} \approx \mathscr{Z}(t_1)<\mathscr{X}(t_1)  \approx n^{-1}X^n_{\lfloor t_1 n\rfloor}$ this is a \emph{subcritical} configuration model (do not confuse with subcriticality in terms of the Karp--Sipser core). In particular, by \cite[Theorem 1.b]{molloy1995critical}  there is a constant $c=c(p_1, p_2, p_3)$ such that with high probability the remaining subgraph after $\lfloor t_1 n\rfloor$ steps has fewer than $c \log(n)$ cycles and all of its connected components have size at most $c \log(n)$. On the other hand, by construction, the Karp--Sipser core is included in the union of all the cycles of $G^n_{[t_1 n]}$, so it has size $O_{ \mathbb{P}}( \log^2 n)$.
	
	\begin{remark}[True size of the subcritical KS-core] The above bound $ O_{\mathbb{P}}(\log^2 n)$ for the size of the subcritical Karp--Sipser core is very crude towards the end of the proof. We expect the actual order of magnitude of the KS-core to be $O_{ \mathbb{P}}(1)$ as in the Erd{\H{o}}s--R\'enyi case \cite{aronson1998maximum}.
	\end{remark}

	\paragraph{Critical and supercritical regime.} 
In this case, combining Proposition \ref{prop:fluid-limit} and our explicit computations of the solutions, we obtain that $ ({X}^n/ S^n, {Y}^n /S^n, Z^n/ S^n, n^{-1}\cdot S^n) (\theta^n)$ converges to $$( \mathbf{x} (t_{ \ext}) , \mathbf{y} (t_{\ext}), \mathbf{z} (t_{\ext}),  \mathscr{S} (t_{\ext})) = \left(0, 1 - 2 \sqrt{b^2-1}, 2 \sqrt{b^2-1}, \frac{16(b^2-1)}{4b^2-1}\right).$$ 
	In particular the number of half-edges of the Karp-Sipser core is equal to $S^n_{\theta^n} = Y^n_{\theta^n}+ Z^n_{\theta^n}$, so it is asymptotically $o_{ \mathbb{P}} (n)$ if $b=1$ (critical case). If $b>1$, it is linear in $n$, which concludes the proof of Theorem~\ref{thm:phasetransition} after a quick computation.

\section{Analysis of the critical case}

In this section, we shall prove our main result Theorem \ref{thm:maincritical}. In the rest of the paper, we shall thus suppose that the initial conditions \eqref{eq:convcrit} are in force. Let us first explain the heuristics to help the reader follow the proof. We refer to Figure \ref{fig:explained} for an illustration.

\begin{figure}[!h]
 \begin{center}
 \includegraphics[width=13cm]{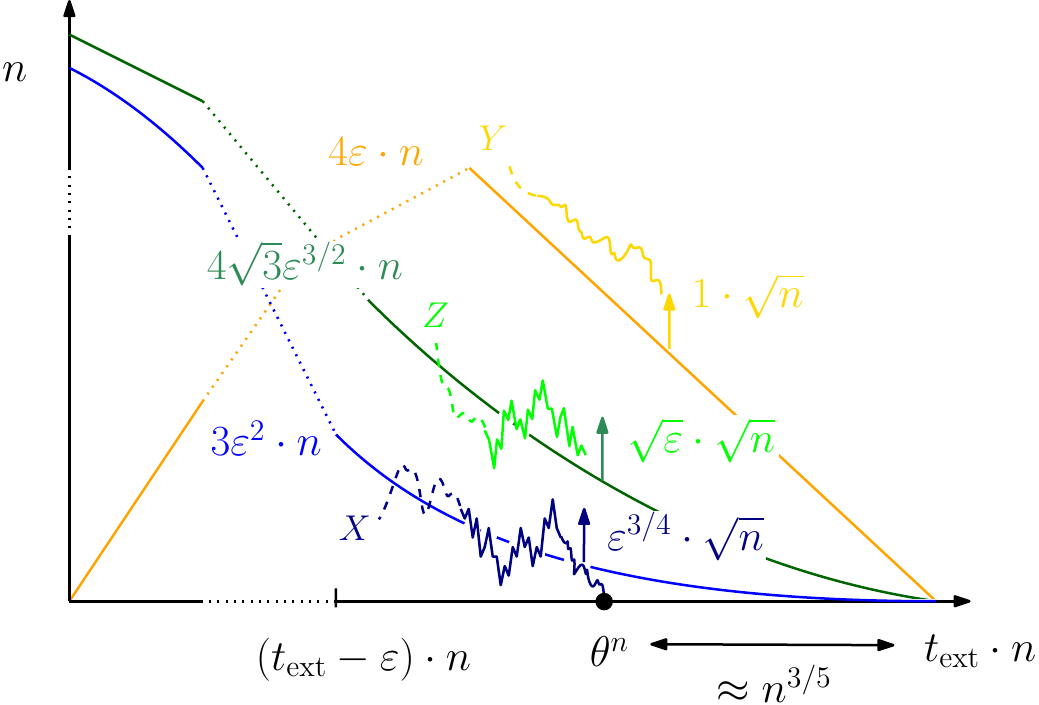}
 \caption{Heuristics for the proof of Theorem \ref{thm:maincritical}. The variations of the processes $(X,Y,Z)$ around its deterministic fluid limit when $ k = (t_{ \mathrm{ext}}- \epsb_k)n$ 	are displayed above. In particular, in the case of $X$, the number of degree $1$ vertices, those variations may cause $X$ to touch $0$ when $ \epsb_k  \approx n^{{-2/5}}$ so that there are $  \epsb_k n \approx n^{3/5}$ vertices of degree $2$ and $ \epsb_k^{{3/2}} n \approx n^{{2/5}}$ vertices of degree $3$ remaining in the graph. \label{fig:explained}}
 \end{center}
 \end{figure}

We have seen above that in the critical regime, the asymptotic size of the Karp-Sipser core is $ o_ \mathbb{P} (n)$ and that almost all vertices have degree 2 (i.e.\ with density $1$ since $ \mathbf{y}(t_ \mathrm{ext})=1$). Recall that the process stops at time $$ \theta^n = \inf \{ k \geq 0 :  X^{n}_{k} = 0 \},$$ which by Proposition \ref{prop:fluid-limit} is $\approx t_{ \mathrm{ext}} \cdot n$. To analyse this stopping time  and understand the size of the KS-core,  we need to be more precise in the analysis of the fluctuations of the process $ (X^n,Y^n,Z^n)$ around its fluid limit $n \cdot (  \mathscr{X},  \mathscr{Y}, \mathscr{Z})$. 
 To this end, we define the fluctuations processes $(A_k^n,B_k^n,C_k^n)_{0 \leq k \leq \theta^n}$ by  \begin{equation*}
\left\{ \begin{array}{rcl}
X_k^n &=& n\mathscr{X}\left( \frac{k}{n}\right) + A_k^n \\
Y_k^n &=& n\mathscr{Y}\left( \frac{k}{n}\right) + B_k^n \\
Z_k^n &=& n\mathscr{Z}\left( \frac{k}{n}\right) + C_k^n\end{array}\right.
\end{equation*}
To simplify notation, the $n$ in the exponent will be implicit for the rest of the paper when there is no ambiguity, even if we will often look at the asymptotic as $n$ goes to infinity.

When we are sufficiently far from the end of the process, i.e. when $ k \approx tn$ for $0 \leq t < t_ \mathrm{ext}$ we know from  Proposition \ref{prop:fluid-limit} that $(X,Y,Z)$ is well approximated by $ n \cdot (  \mathscr{X},  \mathscr{Y}, \mathscr{Z})$ and classical results (see Lemma~\ref{lem:enterregion}) will show that the fluctuations $A,B$ and $C$ renormalized by a factor $1/ \sqrt{n}$ converge to Gaussian variables whose variances depend on $t$. To analyse the algorithm towards the end we will use the notation, for $ 0 \leq k \leq (t_ \mathrm{ext} n) \wedge \theta$, 
 \begin{eqnarray} \label{eq:notationreste} \boxed{\epsb_k := t_ \mathrm{ext} - \frac{k}{n}\geq 0 \qquad \mbox{so that} \qquad k = ( t_ \mathrm{ext}  - \epsb_k)n. } \end{eqnarray}
Notice the bold font for $\epsb$ to avoid confusion. Recall from Equation \eqref{eqn_fluidlimit_near_end} that $ \mathscr{X} (k/n), \mathscr{Y}(k/n)$ and $ \mathscr{Z}(k/n)$ are of order respectively $ \epsb_k^{2}, \epsb_k$ and $ \epsb_k^{3/2}$. We will see below that the order of magnitude of $n^{-1/2}\cdot A_{k}, n^{-1/2}\cdot B_{k}$ and $n^{-1/2}\cdot C_{k}$  are respectively $ \epsb_k^{3/4}, 1$ and  $\epsb_k^{{1/2}}$. In particular, the fluctuations $A$ of $X$ become of the same order of magnitude as its deterministic approximation $ n \mathscr{X}$ when $$ n \epsb_k^{2} \approx n \mathscr{X}( t_{ \mathrm{ext}}- \epsb_k) \approx A_{k} \approx  \sqrt{n} \cdot\epsb_k^{{3/4}}  \quad \mbox{ i.e. when }  \quad  \epsb_k \approx n^{-2/5} \iff  n t_{ \mathrm{ext}}-k \approx n^{3/5},$$ and this explains  heuristically why  $ \theta_{n} =  t_{ \mathrm{ext}}n + O(n^{{3/5}})$ and why the size of the Karp-Sipser core is given essentially by $Y_{\theta_{n}} \approx   n^{3/5}$. The rest of this section makes those \rev{heuristics} rigorous and proves our main result Theorem \ref{thm:maincritical}.

We first provide estimations of the conditional expected  drifts and variances of the increments of the fluctuation processes $(A,B,C)$ in Propositions \ref{prop_drift_estimates} and \ref{prop_variance_estimates}. These propositions support the above heuristics and lead us to introduce the renormalized fluctuations processes
\begin{equation*}
\widetilde{A}_k = \displaystyle \frac{ A_k}{\boldsymbol{\varepsilon}_k^{3/4} \sqrt{n}}, \qquad 
\widetilde{B}_k = \displaystyle \frac{ B_k}{  \sqrt{n}}, \quad \mbox{ and } \quad
\widetilde{C}_k = \displaystyle \frac{ C_k}{ \boldsymbol{\varepsilon}_k^{1/2} \sqrt{n}},
\end{equation*}
which, at least heuristically, should be tight in $k$. After that, our proof consists in two main steps. First we will show that with high probability as $n \to \infty$, we can bound --with some $\log$'s-- the process $ ( \widetilde{A}, \widetilde{B}, \widetilde{C})$ up to time $ O(n^{3/5})$ before $ t_ \mathrm{ext} n$, see Proposition \ref{prop:goodregion}. To do so we will extensively use the fact that for $ \widetilde{C}$ and $ \widetilde{A}$, the conditional expected drifts tend ``to pull them back to $0$''  so that the processes remain small over all scales. Finally, in a second step, we will show that when $ k = n t_{ \mathrm{ext}} - t n^{3/5}$ for $x \in \mathbb{R}$ the fluctuation process $ \widetilde{A}$ is  well approximated by a stochastic differential equation, see Proposition \ref{prop:end}. The fluctuations $ \widetilde{B}$ and $ \widetilde{C}$ are, at this scale, still negligible in front of their differential method approximation.
 
\subsection{Drift and variance estimates}\label{sec:driftvariance}
In this section we compute the conditional expected drift and variance of the fluctuations processes $A,B,C$. Recall the very important notation $ \epsb_k$ introduced in \eqref{eq:notationreste}. 
As explained above, it will turn out that $ \theta \equiv \theta^{n}$ is located around $ t_{ \mathrm{ext}}n - O( n^{{3/5}})$ and in the forthcoming Propositions \ref{prop_drift_estimates} and \ref{prop_variance_estimates} we shall allow a little room and only look at times $k < \theta$ such that $ \epsb_{k} \geq n^{-2/5-1/100}$ (and indeed the fraction $1/100$ is somehow arbitrary). We thus put  
 \begin{eqnarray} \label{eq:defthetatilde} \tilde{\theta} = \theta \wedge \left(t_{ \mathrm{ext}}n - n^{3/5-1/100}\right). \end{eqnarray} 
  Recall from above the notation 
$$ \widetilde{A}_k := \frac{X_{k} - n\mathscr{X}\left( \frac{k}{n}\right)}{  \epsb_{k} ^{3/4}\sqrt{n}}  \quad 
 \widetilde{B}_k := \frac{Y_k - n\mathscr{Y}\left( \frac{k}{n}\right)}{  \sqrt{n}}  \quad \mbox{ and } \quad 
  \widetilde{C}_k := \frac{Z_k - n\mathscr{Z}\left( \frac{k}{n}\right)}{ \epsb^{1/2}_{k} \sqrt{n}} .$$
Recall also that $\mathcal{F}_k$ is the $\sigma$-algebra generated by $\left( X_i, Y_i, Z_i \right)_{0 \leq i \leq k}$. We have chosen the normalization so that the processes $ \widetilde{A}_{k}, \widetilde{B}_{k}$ and $\widetilde{C}_{k}$ are of order $1$ and fluctuate at the time-scale  $ \epsb_{k}n$, which is why the conditional expected drift and variances are all of order $ \frac{1}{ \epsb_{k}n}$.

\begin{proposition}[Drift estimates]\label{prop_drift_estimates}
There exists a constant $K>0$ such that for all $\delta>0$, there is $\eta \equiv \eta(\delta)>0$ such that the following holds for $n$ large enough. For any $   (t_{\ext}-\eta) n  \leq k < \tilde{\theta}$,  
 if we have $|\widetilde{A}_k|, |\widetilde{B}_k|, |\widetilde{C}_k|<1000 \log n$ then:
\begin{align}
\left| \E \left[ \Delta \widetilde{A}_k | \mathcal{F}_k \right] +\frac{1}{\epsb_k n} \frac{1}{4} \widetilde{A}_k \right| & \leq \frac{\delta}{\epsb_k n} |\widetilde{A}_k| + \frac{K \epsb_k^{1/4}}{\epsb_k n} \max \left( |\widetilde{B}_k|, |\widetilde{C}_k| \right) + \frac{K}{\epsb_k n} n^{-1/30}, \label{eqn_drift_estimate_A}\\
\left| \E \left[ \Delta \widetilde{B}_k | \mathcal{F}_k \right] \right| & \leq \frac{K}{\epsb_k n} \sqrt{\epsb_k} \max \left( |\widetilde{A}_k|, |\widetilde{B}_k|, |\widetilde{C}_k| \right) + \frac{K}{\epsb_k n} n^{-1/30}, \label{eqn_drift_estimate_B}\\
\left| \E \left[ \Delta \widetilde{C}_k | \mathcal{F}_k \right] -\frac{1}{\epsb_k n} \left( \frac{3\sqrt{3}}{2} \widetilde{B}_k - \widetilde{C}_k \right) \right| & \leq \frac{\delta}{\epsb_k n} \max \left( |\widetilde{B}_k|, |\widetilde{C}_k| \right) + \frac{K}{\epsb_k n} \epsb_k^{3/4} |\widetilde{A}_k| + \frac{K}{\epsb_k n} n^{-1/30}. \label{eqn_drift_estimate_C}
\end{align}
\end{proposition}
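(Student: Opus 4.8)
The plan is to compute the exact conditional expected drifts of $(X,Y,Z)$ from the transition table in Figure~\ref{fig:table}, then expand everything in powers of $\epsb_k$ to the required precision. I would start by writing, for each of the $13$ transitions, its probability as an explicit rational function of $(x,y,z)=(X_k,Y_k,Z_k)/S_k$, keeping in mind that the last three transitions contribute only $O(1/S_k)$. Summing against the displayed increments $(\Delta X,\Delta Y,\Delta Z)$ gives $\E[\Delta X_k\mid\mathcal F_k]=\phi_1(X_k/n,Y_k/n,Z_k/n)+O(1/S_k)$ and similarly for $Y,Z$, with $\phi$ as in~\eqref{eqn_diff_system_primo}. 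Near extinction we have $S_k\asymp Y_k\asymp \epsb_k n$, so the $O(1/S_k)$ error is $O(1/(\epsb_k n))$; this is where the final $n^{-1/30}$ (really $\epsb_k^{-1}\cdot n^{-1}\cdot$ something) error terms come from, once we also use $\epsb_k\geq n^{-2/5-1/100}$ to convert $1/(\epsb_k n)=O(\epsb_k^{-1}n^{-1}\cdot n^{-1/30})$ style bounds.

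Next I would Taylor-expand the drift around the fluid limit. Writing $X_k=n\mathscr X(k/n)+A_k$ etc., and using $A_k=\epsb_k^{3/4}\sqrt n\,\widetilde A_k$, $B_k=\sqrt n\,\widetilde B_k$, $C_k=\epsb_k^{1/2}\sqrt n\,\widetilde C_k$, one has
\[ \E[\Delta X_k\mid\mathcal F_k]=\phi_1\!\left(\tfrac{X_k}{n},\tfrac{Y_k}{n},\tfrac{Z_k}{n}\right)=\phi_1(\mathscr X,\mathscr Y,\mathscr Z)+\nabla\phi_1\cdot\tfrac1n(A_k,B_k,C_k)+O\!\left(\tfrac{\|(A,B,C)\|^2}{n^2}\right),\]
where the arguments of $\mathscr X,\mathscr Y,\mathscr Z$ and $\nabla\phi_1$ are all at time $k/n=t_\ext-\epsb_k$. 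Then $\Delta\widetilde A_k=\frac{X_{k+1}-n\mathscr X(\frac{k+1}{n})}{\epsb_{k+1}^{3/4}\sqrt n}-\widetilde A_k$; expanding $\mathscr X(\frac{k+1}{n})=\mathscr X(\frac kn)+\frac1n\mathscr X'(\frac kn)+O(n^{-2})$ and $\epsb_{k+1}^{3/4}=\epsb_k^{3/4}(1+O(\frac1{\epsb_k n}))$, the deterministic pieces cancel because $\mathscr X'=\phi_1(\mathscr X,\mathscr Y,\mathscr Z)$, and what survives is the linearized drift plus the reparametrization term $\frac34\frac{1}{\epsb_k n}\widetilde A_k$ coming from $\partial_k(\epsb_k^{-3/4})$. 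The crux is then to identify the leading coefficient: using the near-extinction asymptotics~\eqref{eqn_fluidlimit_near_end} ($\mathscr X\sim3\epsb_k^2$, $\mathscr Y\sim4\epsb_k$, $\mathscr Z\sim4\sqrt3\,\epsb_k^{3/2}$, so proportions $\mathbf x\sim\frac34\epsb_k$, $\mathbf z\sim\sqrt3\,\epsb_k^{1/2}$, $\mathbf y\to1$) one evaluates $\partial_{\mathscr X}\phi_1$, $\partial_{\mathscr Y}\phi_1$, $\partial_{\mathscr Z}\phi_1$ along the trajectory, multiplies by the respective normalizations $\epsb_k^{3/4}\sqrt n$, $\sqrt n$, $\epsb_k^{1/2}\sqrt n$, and divides by $\epsb_k^{3/4}\sqrt n$; this is a somewhat delicate bookkeeping of powers of $\epsb_k$, and one checks that the $\widetilde A_k$-coefficient tends to $-\frac14\cdot\frac1{\epsb_k n}$, the $\widetilde B_k,\widetilde C_k$-coefficients are $O(\epsb_k^{1/4}\cdot\frac1{\epsb_k n})$, and the self-coupling of $\widetilde B,\widetilde C$ produces the explicit linear operator $\widetilde B_k\mapsto\frac{3\sqrt3}{2}\widetilde B_k-\widetilde C_k$ in~\eqref{eqn_drift_estimate_C} while the back-reaction of $\widetilde A$ on $\widetilde C$ is down by $\epsb_k^{3/4}$. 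The $\delta/(\epsb_k n)$ terms absorb the difference between the exact coefficients and their $\epsb_k\to0$ limits, which is legitimate since on $k\geq(t_\ext-\eta)n$ we have $\epsb_k\leq\eta$ and we may choose $\eta(\delta)$ small.

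I would organize this as: (i) an exact-drift lemma giving $\E[\Delta X_k\mid\mathcal F_k]$ etc. as $\phi+O(1/S_k)$; (ii) substitution of the fluctuation variables and cancellation of the fluid-limit part, isolating a ``linearized drift'' term $L_k(A_k,B_k,C_k)/n$ plus a ``reparametrization'' term plus a quadratic remainder; (iii) the asymptotic evaluation of the entries of $L_k$ using~\eqref{eqn_fluidlimit_near_end} and~\eqref{eq:conservation1}; (iv) collecting errors, where the quadratic remainder is controlled using the hypothesis $|\widetilde A_k|,|\widetilde B_k|,|\widetilde C_k|<1000\log n$ (so $\|(A,B,C)\|\leq C\sqrt n\log n$ and $\|(A,B,C)\|^2/n^2\leq C\log^2 n/n$, which is $\leq\epsb_k^{-1}n^{-1}\cdot n^{-1/30}$ for $\epsb_k\geq n^{-2/5-1/100}$) and the $1/S_k$ error is handled the same way. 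The main obstacle is step (iii): keeping track of \emph{all} lower-order terms in the $\epsb_k$-expansion of $\nabla\phi$ along the critical trajectory, because several naively-leading contributions cancel (this is exactly the ``self-correcting'' mechanism mentioned in the introduction), and one must be careful that subleading corrections to $\mathscr X,\mathscr Y,\mathscr Z$ beyond~\eqref{eqn_fluidlimit_near_end} — which we would need to extract from~\eqref{eq:solution}, \eqref{eq:integrale1ere} — do not feed back into the claimed leading coefficients; verifying that they only affect the $\delta/(\epsb_k n)$ and $n^{-1/30}$ error terms is the delicate point.
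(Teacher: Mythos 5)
Your overall strategy coincides with the paper's: write the exact one-step drift as $\phi+O(1/S_k)$, linearize $\phi$ around the fluid limit, evaluate $\nabla\phi$ along the critical trajectory via \eqref{eqn_fluidlimit_near_end}, and keep the reparametrization term coming from $\epsb_{k+1}^{-3/4}$ versus $\epsb_k^{-3/4}$; the paper organizes this exactly as your steps (i)--(iv), including the input $S_k\gtrsim\epsb_k n$ (Lemma~\ref{lem_XYZ_not_too_small}) and the ODE-Taylor term controlled by a bound on $\mathscr{X}''$. However, there is a genuine gap in your steps (ii)/(iv): you write the Taylor remainder of $\phi$ as $O\bigl(\|(A_k,B_k,C_k)\|^2/n^2\bigr)$ and bound it by $\log^2 n/n$. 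This silently assumes that the Hessian of $\phi$ is bounded near the evaluation point, which is false: the second derivatives of $\phi_X$ are of the form $P(x,y,z)/(x+y+z)^4$ and along the critical trajectory $x+y+z\asymp\epsb_k$, so entries such as $\partial^2\phi_X/\partial x^2$ are of size $\epsb_k^{-2}$ (up to small powers of $n$ coming from the $\log n$-sized fluctuations allowed by the hypothesis). Moreover, even the corrected crude bound $\|v\|^2\cdot\|D^2\phi_X\|\asymp\epsb_k^{-2}\log^2 n/n$ does not suffice: after dividing by the normalization $\epsb_k^{3/4}\sqrt n$ it is of order $\frac{1}{\epsb_k n}\cdot\frac{\log^2 n}{\epsb_k^{7/4}\sqrt n}$, which for $\epsb_k\approx n^{-2/5}$ exceeds the main drift term by a positive power of $n$. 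The paper flags exactly this and closes the gap by an anisotropic, term-by-term estimate of $\sum_{i,j}|v_i|\,|v_j|\,\bigl|\partial^2_{ij}\phi_X\bigr|$ (see \eqref{eqn_bd_second_derivative_A} and the accompanying table): the directions in which the Hessian is worst ($x$ and $z$) are precisely those in which the displacement is small, $|v_1|\lesssim\epsb_k^{3/4}\log n/\sqrt n$ and $|v_3|\lesssim\epsb_k^{1/2}\log n/\sqrt n$, and only this matching of powers yields a remainder $O\bigl(n^{-1/30}/(\epsb_k n)\bigr)$. (Also, your numerical comparison ``$\log^2 n/n\leq\epsb_k^{-1}n^{-1}n^{-1/30}$ for $\epsb_k\geq n^{-2/5-1/100}$'' goes the wrong way: that inequality would require an \emph{upper} bound on $\epsb_k$ of order $n^{-1/30}$, whereas $\epsb_k$ may be of constant order $\eta$; with the missing normalization $\epsb_k^{3/4}\sqrt n$ reinstated the comparison would be harmless, but only if the Hessian were bounded, which it is not.)

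By contrast, the step you single out as the main obstacle --- step (iii), the evaluation of $\nabla\phi$ along the trajectory and the worry that corrections beyond \eqref{eqn_fluidlimit_near_end} feed back into the constants --- is comparatively benign and is handled in the paper with no extra input: only the $\widetilde{A}_k$-coefficient in \eqref{eqn_drift_estimate_A} (which combines with the $\tfrac34$ reparametrization term to produce the $\tfrac14$) and, for \eqref{eqn_drift_estimate_C}, the coefficients $\tfrac{3\sqrt3}{2}$ and $\tfrac32$ need to be identified exactly, and the discrepancy between these gradient entries and their $\epsb_k\to0$ limits is absorbed into the $\delta/(\epsb_k n)$ terms simply by choosing $\eta(\delta)$ small; the remaining gradient entries only require one-sided bounds $O(1)$ and $O(\epsb_k^{-1/2})$, which produce the $K\epsb_k^{1/4}$- and $K\epsb_k^{1/2}$-type terms. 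A smaller omission in your step (ii): for the $\widetilde{B}$ and $\widetilde{C}$ estimates the ODE-Taylor error involves $\mathscr{Y}''$ and $\mathscr{Z}''$, which blow up like $(t_{\ext}-t)^{-1/2}$ rather than staying bounded as $\mathscr{X}''$ does; this still fits inside $O\bigl(n^{-1/30}/(\epsb_k n)\bigr)$, but it must be checked rather than subsumed under a generic $O(n^{-2})$.
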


\begin{proposition}[Variance estimates]\label{prop_variance_estimates}
There exists a constant $K$ such that for all $\delta>0$, there is $\eta\equiv \eta(\delta)>0$ such that the following holds for $n$ large enough. For any $   (t_{\ext}-\eta) n  \leq k < \tilde{\theta}$,  
if we have $|\widetilde{A}_k|, |\widetilde{B}_k|, |\widetilde{C}_k|<1000 \log n$ then:
\begin{align}
\left| \var \left( \Delta \widetilde{A}_k | \mathcal{F}_k \right) - \frac{2\sqrt{3}}{\epsb_k n} \right| & \leq \frac{\delta}{\epsb_k n} + \frac{K}{\epsb_k n} n^{-1/30} + \frac{K\epsb_k^{1/2}}{\epsb_k n} \widetilde{A}_k^2 + \frac{K}{n} \max \left( \widetilde{B}_k^2, \widetilde{C}_k^2 \right), \label{eqn_variance_estimate_A} \\
\var \left( \Delta \widetilde{A}_k | \mathcal{F}_k \right) & \leq \frac{2 \sqrt{3}+\delta}{\epsb_k n} + \frac{K}{\epsb_k n} n^{-1/30}, \label{eqn_variance_estimate_A_bis} \\
 \var \left(\Delta \widetilde{B}_k | \mathcal{F}_k \right) & \leq \frac{K \epsb_k}{\epsb_k n} \label{eqn_variance_estimate_B} \\
 \var \left( \Delta \widetilde{C}_k | \mathcal{F}_k \right) & \leq \frac{K \epsb_k^{1/2}}{\epsb_k n}. \label{eqn_variance_estimate_C}
\end{align}
\end{proposition}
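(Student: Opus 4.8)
\textbf{Proof plan for Propositions \ref{prop_drift_estimates} and \ref{prop_variance_estimates}.}

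The plan is to start from the exact transition probabilities of Figure \ref{fig:table}, which give the conditional law of $(\Delta X_k, \Delta Y_k, \Delta Z_k)$ as an explicit rational function of the proportions $\mathbf{x}_k = X_k/S_k$, $\mathbf{y}_k = Y_k/S_k$, $\mathbf{z}_k = Z_k/S_k$. Summing over the $13$ cases, one gets $\E[\Delta X_k \mid \mathcal F_k] = \phi_1(\mathbf{x}_k,\mathbf{y}_k,\mathbf{z}_k) + O(1/S_k)$ and similarly for the other two components, where $\phi = (\phi_1,\phi_2,\phi_3)$ is the drift from \eqref{eqn_diff_system_primo}; the $O(1/S_k)$ collects the three subleading cases at the bottom of the table. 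The conditional second moments are handled the same way: each increment is bounded, so $\var(\Delta X_k \mid \mathcal F_k)$ is again an explicit rational function $\psi_1(\mathbf{x}_k,\mathbf{y}_k,\mathbf{z}_k) + O(1/S_k)$, and likewise for $Y,Z$. So the first step is purely bookkeeping: write down $\phi$ and the analogous variance function $\psi$ from the table, noting in particular that near extinction $S_k \approx 4\epsb_k n$, so the $O(1/S_k)$ error is $O(1/(\epsb_k n))$, which matches the target scale.

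The second step is a Taylor expansion of the drift around the fluid limit. By definition of $(A_k,B_k,C_k)$ we have $X_k = n\mathscr X(k/n) + A_k$, etc., and $n\mathscr X, n\mathscr Y, n\mathscr Z$ themselves solve the ODE up to a discretization error. Thus
\begin{equation*}
\Delta X_k = \mathscr X'\!\left(\tfrac kn\right) + \Delta A_k + O\!\left(\tfrac1n\right),
\end{equation*}
and taking conditional expectations, $\E[\Delta A_k \mid \mathcal F_k] = \phi_1(\mathbf{x}_k,\mathbf{y}_k,\mathbf{z}_k) - \mathscr X'(k/n) + O(1/(\epsb_k n))$. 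Now one linearizes: writing $X_k = n\mathscr X(k/n)(1 + A_k/(n\mathscr X))$ and using $\mathscr X(t_\ext - \eps)\sim 3\eps^2$, $\mathscr Y \sim 4\eps$, $\mathscr Z \sim 4\sqrt 3\,\eps^{3/2}$ from \eqref{eqn_fluidlimit_near_end}, the proportions are $\mathbf{x}_k \approx \tfrac34\epsb_k$, $\mathbf{y}_k \approx 1 - \tfrac34\epsb_k - \sqrt 3\,\epsb_k^{1/2}$, $\mathbf{z}_k \approx \sqrt 3\,\epsb_k^{1/2}$, perturbed by $A_k/(n\mathscr X) = \widetilde A_k/(3\epsb_k^{5/4}\sqrt n \cdot$const$)$ and similar. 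One plugs these into $\phi$, expands to first order in the fluctuations (the quadratic terms in $\widetilde A, \widetilde B, \widetilde C$ contribute to the $\delta$-error since these are $O(\log n)$ and multiplied by small powers of $\epsb_k$ and $1/\sqrt n$), and collects powers of $\epsb_k$. The leading linear contribution is what produces the $-\tfrac{1}{4\epsb_k n}\widetilde A_k$ self-correcting term in \eqref{eqn_drift_estimate_A} and the $\tfrac{1}{\epsb_k n}(\tfrac{3\sqrt3}{2}\widetilde B_k - \widetilde C_k)$ term in \eqref{eqn_drift_estimate_C}; the absence of a leading linear term in $\widetilde B$ (i.e.\ \eqref{eqn_drift_estimate_B}) comes from the fact that $\phi_2$ vanishes to higher order near the extinction point. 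A subtle point is the passage from $\Delta A_k$ to $\Delta\widetilde A_k$: since $\widetilde A_k = A_k/(\epsb_k^{3/4}\sqrt n)$ and $\epsb_k$ itself moves by $1/n$ per step, there is an extra ``drift from the normalization'' term $A_k \cdot \Delta(\epsb_k^{-3/4})/\sqrt n = \tfrac34\,\widetilde A_k/(\epsb_k n) + \cdots$, and this is precisely where part of the constant $\tfrac14$ comes from (the raw drift of $A_k$ contributes another piece). One must be careful to track these normalization corrections for all three processes; they are the reason the coefficients $\tfrac14$, $\tfrac{3\sqrt3}{2}$, $-1$ take the values they do.

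For the variance estimates the computation is analogous but shorter: $\var(\Delta\widetilde A_k \mid \mathcal F_k) = \var(\Delta A_k \mid \mathcal F_k)/(\epsb_k^{3/2} n)$ up to negligible cross terms, and $\var(\Delta A_k \mid \mathcal F_k) = \psi_1(\mathbf{x}_k,\mathbf{y}_k,\mathbf{z}_k) + O(1/(\epsb_k n))$ where $\psi_1$ is read off the table; substituting $\mathbf{z}_k \approx \sqrt3\,\epsb_k^{1/2}$ and keeping the leading term gives $\psi_1 \approx 2\sqrt3\,\epsb_k^{3/2}$, hence the $\tfrac{2\sqrt3}{\epsb_k n}$ in \eqref{eqn_variance_estimate_A}; the corrections involving $\widetilde A_k^2$ and $\max(\widetilde B_k^2, \widetilde C_k^2)$ come from the next-order terms in the expansion of $\psi_1$. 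The bounds \eqref{eqn_variance_estimate_B} and \eqref{eqn_variance_estimate_C} just require identifying the order of $\psi_2$ and $\psi_3$ near extinction ($O(\epsb_k)$ and $O(\epsb_k^{1/2})$ respectively after renormalization). I expect the main obstacle to be neither conceptual nor the $13$-case enumeration itself, but rather the careful organization of the error terms: one needs the expansions to be valid uniformly over the allowed range $\epsb_k \geq n^{-2/5-1/100}$ and over all fluctuation values up to $1000\log n$, which forces one to check that every discarded term (discretization error in the ODE, quadratic terms in the fluctuations, the $O(1/S_k)$ from rare transitions, normalization corrections of subleading order) is genuinely $o(1/(\epsb_k n))$ times the relevant factor — in particular that the $n^{-1/30}$ slack in the statements really does absorb all of them, which is why the somewhat generous exponents $1/100$ and $1/30$ appear.
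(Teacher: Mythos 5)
Your overall strategy is the paper's: read off from the transition table the second-moment functions $\psi_X,\psi_Y,\psi_Z$, write the conditional variance as second moment minus squared drift, replace the second moment by $\psi$ evaluated on the fluid limit using \eqref{eqn_fluidlimit_near_end}, and check that the discretization, $O(1/S_k)$ and linearization errors are absorbed by the $n^{-1/30}$ slack uniformly over $\epsb_k \geq n^{-2/5-1/100}$ and fluctuations up to $1000\log n$. The bounds \eqref{eqn_variance_estimate_B} and \eqref{eqn_variance_estimate_C} indeed need only the order of magnitude of the second moments (the paper even settles $\widetilde B$ with the trivial bound $|\Delta Y_k|\leq 3$, and $\widetilde C$ by noting every monomial of $\psi_Z$ carries a factor $z/s=O(\epsb_k^{1/2})$).

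There is, however, a concrete misstep in where you source the error terms of \eqref{eqn_variance_estimate_A}, and it leaves \eqref{eqn_variance_estimate_A_bis} unproved. You claim the corrections $\frac{K\epsb_k^{1/2}}{\epsb_k n}\widetilde A_k^2$ and $\frac{K}{n}\max\left(\widetilde B_k^2,\widetilde C_k^2\right)$ come from higher-order terms in the expansion of $\psi_X$. In the paper they do not: the difference $\psi_X\left(\frac{X_k}{n},\frac{Y_k}{n},\frac{Z_k}{n}\right)-\psi_X\left((\mathscr X,\mathscr Y,\mathscr Z)(k/n)\right)$ is handled by a first-order gradient bound, with the partial derivatives of $\psi_X$ estimated term by term via Lemma~\ref{lem_XYZ_not_too_small}, and is entirely swallowed by $\frac{K}{\epsb_k n}n^{-1/30}$; the quadratic fluctuation terms come solely from the subtracted squared drift $-\frac{1}{\epsb_{k+1}^{3/2}n}\E\left[\Delta X_k\mid\mathcal F_k\right]^2$, which is estimated using Proposition~\ref{prop_drift_estimates}. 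This attribution is not cosmetic: since that term is nonpositive, simply dropping it gives the one-sided bound \eqref{eqn_variance_estimate_A_bis} with no quadratic terms, and that is precisely the version needed later when summing the variance of $M^A$ over scales while $|\widetilde A_k|$ may be of order $\log n$. With your attribution the quadratic errors would be two-sided and \eqref{eqn_variance_estimate_A_bis} would not follow; your plan in fact never addresses it. Two smaller points: your leading-order evaluation is internally inconsistent --- near extinction $\psi_X\sim 2\sqrt3\,\epsb_k^{1/2}$ (from $\frac{yz}{s^2}+\frac{y^2z}{s^3}$ with $y\sim4\epsb_k$, $z\sim4\sqrt3\,\epsb_k^{3/2}$, $s\sim4\epsb_k$), not $2\sqrt3\,\epsb_k^{3/2}$, and only the former yields $\frac{2\sqrt3}{\epsb_k n}$ after dividing by $\epsb_{k}^{3/2}n$; and the identity $\var\left(\Delta\widetilde A_k\mid\mathcal F_k\right)=\frac{1}{\epsb_{k+1}^{3/2}n}\var\left(\Delta X_k\mid\mathcal F_k\right)$ is exact, because the renormalization correction is $\mathcal F_k$-measurable, so no ``negligible cross terms'' need to be argued.
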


The proofs of the above two propositions follow by examining precisely the \rev{transition probabilities} of the Markov chain $(X,Y,Z)$ given by Figure \ref{fig:table} and basic (though important) analysis of the behavior of 
the function $\phi$ (defined by \eqref{eqn_diff_system_primo}) and its gradient $\nabla \phi$ near $t_{ \mathrm{ext}}$. Let us start with a deterministic lemma based on \eqref{eqn_fluidlimit_near_end} controlling $X,Y,Z$ from the processes $\widetilde{A},\widetilde{B},\widetilde{C}$:

\begin{lemma}\label{lem_XYZ_not_too_small}
There are absolute constants $C,c>0$ such that if $|\widetilde{A}_k|, |\widetilde{B}_k|, |\widetilde{C}_k|<1000 \log n$ and $X_k>0$ and $ \epsb_{k} \in [n^{-2/5-1/100}, \eta]$, for $n$ large enough we have
\[ X_k \leq C \epsb_k^2 n \times n^{1/100}, \quad Y_k \leq C \epsb_k n, \quad Z_k \leq C \epsb_k^{3/2} n\]
and
\[S_k \geq Y_k \geq c \epsb_k n.\]
\end{lemma}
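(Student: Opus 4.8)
The plan is to derive all four bounds directly from the near-extinction asymptotics~\eqref{eqn_fluidlimit_near_end} together with the definitions of the renormalized fluctuations $\widetilde{A},\widetilde{B},\widetilde{C}$. The point is simply that if the fluctuations are polylogarithmic then they are negligible compared to the powers of $n$ appearing in the deterministic part, so $X_k,Y_k,Z_k$ are, up to lower-order corrections, equal to $n\mathscr{X}(k/n),n\mathscr{Y}(k/n),n\mathscr{Z}(k/n)$.

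\medskip

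\noindent\textbf{Step 1: upper bounds.} First I would unfold the definitions:
\[ X_k = n\mathscr{X}(k/n) + A_k = n\mathscr{X}(t_{\ext}-\epsb_k) + \epsb_k^{3/4}\sqrt n\,\widetilde{A}_k, \]
and similarly for $Y_k,Z_k$. By~\eqref{eqn_fluidlimit_near_end} and the fact that $\eta$ is small, there is an absolute constant $C_0$ with $n\mathscr{X}(t_{\ext}-\epsb_k)\leq C_0\epsb_k^2 n$, $n\mathscr{Y}(t_{\ext}-\epsb_k)\leq C_0\epsb_k n$ and $n\mathscr{Z}(t_{\ext}-\epsb_k)\leq C_0\epsb_k^{3/2}n$. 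Using $|\widetilde{A}_k|,|\widetilde{B}_k|,|\widetilde{C}_k|<1000\log n$, the fluctuation contributions are at most $1000\log n\cdot\epsb_k^{3/4}\sqrt n$, $1000\log n\cdot\sqrt n$ and $1000\log n\cdot\epsb_k^{1/2}\sqrt n$ respectively. It remains to check that each such term is dominated (up to the advertised factor $n^{1/100}$ in the case of $X$) by the corresponding deterministic term on the range $\epsb_k\in[n^{-2/5-1/100},\eta]$. For $Y$: $\log n\cdot\sqrt n = o(\epsb_k n)$ precisely because $\epsb_k\geq n^{-2/5-1/100}\gg n^{-1/2}\log n$; so $Y_k\leq 2C_0\epsb_k n$ for $n$ large, giving the $Y$ bound (and any constant $C\geq 2C_0$ works uniformly). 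For $Z$: $\log n\cdot\epsb_k^{1/2}\sqrt n = o(\epsb_k^{3/2} n)$ iff $\log n\cdot\sqrt n = o(\epsb_k n)$, the same condition, so $Z_k\leq 2C_0\epsb_k^{3/2}n$. For $X$: $\log n\cdot\epsb_k^{3/4}\sqrt n$ versus $\epsb_k^2 n$ — here $\epsb_k^{3/4}\sqrt n\big/(\epsb_k^2 n)=\epsb_k^{-5/4}n^{-1/2}$, which at the worst case $\epsb_k=n^{-2/5-1/100}$ equals $n^{(2/5+1/100)\cdot 5/4-1/2}=n^{1/2\cdot 5/4\cdot(2/5)+\dots-1/2}$; a quick check shows this is $n^{1/80}\cdot(\text{subpolynomial})$, which is absorbed into the allowed factor $n^{1/100}$. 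Hence $X_k\leq C\epsb_k^2 n\cdot n^{1/100}$ for a suitable absolute $C$ and $n$ large.

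\medskip

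\noindent\textbf{Step 2: lower bounds.} For $S_k\geq Y_k\geq c\epsb_k n$, I use the matching lower bound: $n\mathscr{Y}(t_{\ext}-\epsb_k)\sim 4\epsb_k n$, so $n\mathscr{Y}(t_{\ext}-\epsb_k)\geq 3\epsb_k n$ for $n$ large (equivalently $\eta$ small). Subtracting the fluctuation term $|B_k|\leq 1000\log n\sqrt n = o(\epsb_k n)$, as in Step 1, gives $Y_k\geq 2\epsb_k n$ for $n$ large, so $c=2$ works; and $S_k=X_k+Y_k+Z_k\geq Y_k$ trivially since $X_k,Z_k\geq 0$ (recall $X_k>0$ is assumed, and $Z_k\geq 0$ always).

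\medskip

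\noindent\textbf{Main obstacle.} There is no deep obstacle here — the lemma is a bookkeeping statement. The only thing requiring care is the exponent arithmetic in the $X$ bound: one must confirm that the crude polylog fluctuation of $X$, namely $\epsb_k^{3/4}\sqrt n\log n$, when divided by the deterministic size $\epsb_k^2 n$, stays below $n^{1/100}$ on the whole range $\epsb_k\geq n^{-2/5-1/100}$. This is a monotone-in-$\epsb_k$ quantity so it suffices to evaluate it at the endpoint $\epsb_k=n^{-2/5-1/100}$, and the resulting exponent is a fixed number strictly less than $1/100$ once $n$ is large enough to swallow the $\log n$; I would simply record that computation. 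Everything else (the $Y$ and $Z$ bounds, the lower bound) reduces to the single comparison $\log n\cdot\sqrt n = o(\epsb_k n)$, valid on the stated range with room to spare.
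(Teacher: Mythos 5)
Your argument is essentially identical to the paper's proof: write each of $X_k,Y_k,Z_k$ as its fluid-limit value plus the renormalized fluctuation, bound the deterministic part via~\eqref{eqn_fluidlimit_near_end} (with $\eta$ small), bound the fluctuation by $1000\log n$ times its normalization, and compare exponents using $\epsb_k\ge n^{-2/5-1/100}$; your treatment of the $Y$ and $Z$ upper bounds and of the lower bound $S_k\ge Y_k\ge c\,\epsb_k n$ matches the paper exactly. The one thing to fix is the endpoint arithmetic for $X$: at $\epsb_k=n^{-2/5-1/100}$ the ratio $\epsb_k^{-5/4}n^{-1/2}\log n$ equals $n^{1/80}\log n$, and $n^{1/80}$ is \emph{larger} than $n^{1/100}$ (since $1/80>1/100$), so it is not ``absorbed into the allowed factor $n^{1/100}$'' and the resulting exponent is not ``strictly less than $1/100$'' as you assert; with the exponent taken literally, the $X$-bound fails by a factor of order $n^{1/400}\log n$ at the bottom of the range. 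To be fair, the paper's own proof glosses over the same point (it asserts the fluctuation term is $O\left(\epsb_k^2 n\times n^{1/100}\right)$ under the same hypothesis), and the slip is immaterial for every later use of the lemma, where only some fixed small polynomial factor is needed and there is ample slack (for instance stating the $X$-bound with $n^{1/50}$, or restricting to $\epsb_k\ge n^{-2/5-1/200}$, repairs it); but as written your ``quick check'' records a false inequality, so you should redo that one line rather than claim $1/80<1/100$.
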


\begin{proof} Recall the asymptotics \eqref{eqn_fluidlimit_near_end}. We simply write
\[ X_k \leq n \mathscr{X} \left( \frac{k}{n} \right) + \epsb_k^{3/4} \sqrt{n} \widetilde{A}_k \underset{\eqref{eqn_fluidlimit_near_end}}{\leq} C'\epsb_k^2 n + 1000\epsb_k^{3/4} \sqrt{n} \log n.\]
The assumption $k \leq t_{\ext} n-n^{3/5-1/100}$, i.e. $\epsb_k \geq n^{-2/5-1/100}$, implies that the second term is $O \left( \epsb_k^2 n \times n^{1/100} \right)$. The other two upper bounds can be proved in the same way. Finally, we have
\[ S_k \geq Y_k = n \mathscr{Y} (k/n) + \sqrt{n} \widetilde{B}_k \geq c' \epsb_k n-1000 \sqrt{n} \log n,\]
which is enough to prove the lower bound on $S_k$ since $\epsb_k n \geq n^{3/5-1/100}$ is much larger than $\sqrt{n} \log n$ if $n$ is large enough.
\end{proof}

\begin{proof}[Proof of Proposition~\ref{prop_drift_estimates}] Recall the definition of $\phi$ in \eqref{eqn_diff_system_primo} given in terms of proportions, so that using the notation $s=x+y+z$ we have 
\begin{align*}
\phi_X(x,y,z) &= -2\frac{x}{s}-\frac{yz}{s^2}-3\frac{x^2z}{s^3} -2\frac{xy}{s^2}+ \frac{y^2 z}{s^3}-2\frac{xyz}{s^3}-\frac{z^3}{s^3}-4\frac{x z^2}{s^3},\\
\phi_Y(x,y,z) &= 2 \left( 2 \frac{z^3}{s^3}-\frac{xy}{s^2}-2\frac{y^2z}{s^3}-2\frac{xyz}{s^3}-2\frac{y^2}{s^2}+2\frac{xz^2}{s^3} \right),\\
\phi_Z(x,y,z) &= 3 \left( -\frac{yz}{s^2}-\frac{y^2z}{s^3}-4\frac{yz^2}{s^3}-\frac{x^2z}{s^3}-2\frac{xyz}{s^3}-4\frac{xz^2}{s^3}-3\frac{z^3}{s^3} \right),
\end{align*}
and the fluid limit equation is $\mathscr{X}'=\phi_X(\mathscr{X}, \mathscr{Y}, \mathscr{Z})$, and similarly for the two other coordinates.

\textsc{We start with the estimate~\eqref{eqn_drift_estimate_A} on $\widetilde{A}$}. We first decompose the conditional expected drift as follows:
\begin{align*}
\E \left[ \Delta \widetilde{A}_k | \mathcal{F}_k \right] &= \frac{1}{\epsb_{k+1}^{3/4} \sqrt{n}} \E \left[ \Delta A_k | \mathcal{F}_k \right] + \left( \frac{\epsb_k^{3/4}}{\epsb_{k+1}^{3/4}} - 1 \right) \widetilde{A}_k \\
&= \frac{1}{\epsb_{k+1}^{3/4} \sqrt{n}} \left( \E \left[ \Delta X_k | \mathcal{F}_k \right] -n \left( \mathscr{X} \left( \frac{k+1}{n} \right) - \mathscr{X} \left( \frac{k}{n} \right) \right) \right) + \left( \frac{\epsb_k^{3/4}}{\epsb_{k+1}^{3/4}} - 1 \right) \widetilde{A}_k
\end{align*}
Therefore, by decomposing $1/4 = 1 - 3/4 $, we can decompose the left-hand side of~\eqref{eqn_drift_estimate_A} as follows:
\begin{align}
&\nonumber \left| \E \left[ \Delta \widetilde{A}_k | \mathcal{F}_k \right] +\frac{1}{\epsb_k n} \frac{1}{4} \widetilde{A}_k \right|\\
& \leq \left| \left( \frac{\epsb_k^{3/4}}{\epsb_{k+1}^{3/4}} - 1 \right) \widetilde{A}_k - \frac{3}{4} \frac{1}{\epsb_k n} \widetilde{A}_k  \right| \label{eqn_drift_estimate_A_term_1} \\
&+ \frac{1}{\epsb_{k+1}^{3/4} \sqrt{n}} \left| \E \left[ \Delta X_k | \mathcal{F}_k \right] - \phi_X \left( \frac{X_k}{n}, \frac{Y_k}{n}, \frac{Z_k}{n} \right) \right| \label{eqn_drift_estimate_A_term_2}\\
&+ \frac{1}{\epsb_{k+1}^{3/4} \sqrt{n}} \left| \phi_X \left( \frac{X_k}{n}, \frac{Y_k}{n}, \frac{Z_k}{n} \right) - \phi_X \left( (\mathscr{X}, \mathscr{Y}, \mathscr{Z}) \left( \frac{k}{n} \right) \right) - \left( \frac{A_k}{n}, \frac{B_k}{n}, \frac{C_k}{n} \right) \cdot \nabla \phi_X \left( (\mathscr{X}, \mathscr{Y}, \mathscr{Z}) \left( \frac{k}{n} \right) \right) \right| \label{eqn_drift_estimate_A_term_3}\\
&+ \frac{1}{\epsb_{k+1}^{3/4} \sqrt{n}} \left|  \left( \frac{A_k}{n}, \frac{B_k}{n}, \frac{C_k}{n} \right) \cdot \nabla \phi_X \left( (\mathscr{X}, \mathscr{Y}, \mathscr{Z}) \left( \frac{k}{n} \right) \right) + \frac{1}{\epsb_k} \frac{A_{k}}{n} \right| \label{eqn_drift_estimate_A_term_4}\\
&+ \frac{1}{\epsb_{k+1}^{3/4} \sqrt{n}} \left| \phi_X \left( (\mathscr{X}, \mathscr{Y}, \mathscr{Z}) \left( \frac{k}{n} \right) \right) - n \left( \mathscr{X} \left( \frac{k+1}{n} \right) - \mathscr{X} \left( \frac{k}{n} \right) \right) \right|. \label{eqn_drift_estimate_A_term_5}
\end{align}
We will bound each of these five error terms one by one. More precisely, we will prove that the terms~\eqref{eqn_drift_estimate_A_term_1}, \eqref{eqn_drift_estimate_A_term_2}, \eqref{eqn_drift_estimate_A_term_3} and~\eqref{eqn_drift_estimate_A_term_5} are all $O \left( \frac{n^{-1/30}}{\epsb_k n} \right)$, whereas the other terms in~\eqref{eqn_drift_estimate_A} come from~\eqref{eqn_drift_estimate_A_term_4}. We start with \eqref{eqn_drift_estimate_A_term_1}, which is easy. We simply write $\epsb_k=t_{\ext}-\frac{k}{n}$ and $\epsb_{k+1}=t_{\ext}-\frac{k+1}{n}$. This implies $\frac{\epsb_{k+1}}{\epsb_k}=1-\frac{1}{\epsb_{k} n}$, so
\[ \frac{\epsb_k^{3/4}}{\epsb_{k+1}^{3/4}} - 1 = \frac{3}{4} \frac{1}{\epsb_k n} + O \left( \frac{1}{(\epsb_k n)^2} \right), \]
where the constant is absolute. Finally, using $\epsb_k n \geq n^{3/5-1/100}$ we have
\[ \frac{|\widetilde{A}_k|}{(\epsb_k n)^2} \leq \frac{100 n^{-3/5+1/100} \log n}{\epsb_k n},\]
so we can bound~\eqref{eqn_drift_estimate_A_term_1} by $\frac{K}{\epsb_k n} n^{-1/30}$.

We now move on to~\eqref{eqn_drift_estimate_A_term_2}. The drift $\E \left[ \Delta X_k | \mathcal{F}_k \right]$ can be expressed as the sum over all the cases of Figure~\ref{fig:transitions} of the probability of each case multiplied by the variation of $X$ in this case. For example, the probability for the first case is $\frac{X_k-1}{S_k-1}$. Approximating $\E \left[ \Delta X_k | \mathcal{F}_k \right]$ by $\phi_X \left( \frac{X_k}{n},\frac{Y_k}{n}, \frac{Z_k}{n} \right)$ is then equivalent to approximating $\frac{X_k-1}{S_k-1}$ by $\frac{X_k/n}{S_k/n}$, and similarly for all the other terms. But we have
\[ \frac{X_k-1}{S_k-1}=\frac{X_k/n}{S_k/n} \times \frac{1-\frac{1}{X_k}}{1-\frac{1}{S_k}} = \frac{X_k/n}{S_k/n} \left( 1-O \left( \frac{1}{X_k} \right) +O \left( \frac{1}{S_k} \right)\right) = \frac{X_k/n}{S_k/n} + O \left( \frac{1}{S_k} \right), \]
since $S_k \geq X_k$. When we do the same computation for all the cases of Figure~\ref{fig:transitions}, we also get an error $O ( \frac{1}{S_k} )$. Note that for the last three cases on the bottom right of Figure~\ref{fig:transitions}, the probability is already $O ( \frac{1}{S_k} )$, so these cases do not contribute to $\phi_X (x,y,z)$. So we can bound~\eqref{eqn_drift_estimate_A_term_2} by
\[ \frac{1}{\epsb_{k+1}^{3/4} \sqrt{n}}  O\left( \frac{1}{S_k} \right) \underset{ \mathrm{Lem.} \ref{lem_XYZ_not_too_small}}{=} O \left( \frac{1}{\epsb_{k+1}^{3/4} \sqrt{n}} \times \frac{1}{\epsb_k n} \right) \underset{\epsb_{k} \geq n^{- \frac{2}{5}-\frac{1}{100}}}{=} O \left( \frac{1}{\epsb_k n} \times \frac{1}{(n^{- \frac{2}{5}-\frac{1}{100}})^{3/4} \sqrt{n}} \right) = O \left( \frac{n^{-1/30}}{\epsb_k n} \right).  \]

We move on to~\eqref{eqn_drift_estimate_A_term_3}. We want to estimate the error when we do a linear approximation of $\phi_X$ near $(\mathscr{X}, \mathscr{Y}, \mathscr{Z}) \left( \frac{k}{n} \right)$, so we will need to bound the second derivatives of $\phi_X$ near this point. More precisely, we write $(v_1,v_2,v_3)=\left( \frac{A_k}{n}, \frac{B_k}{n}, \frac{C_k}{n} \right)$. By the Taylor-Lagrange formula we can bound~\eqref{eqn_drift_estimate_A_term_3} by
\begin{equation}\label{eqn_bd_second_derivative_A}
\frac{1}{\epsb_{k+1}^{3/4}\sqrt{n}} \sum_{1 \leq i,j \leq 3} |v_i| \times |v_j| \times \max_{\substack{|u_1-\mathscr{X}(k/n)|\leq |v_1|\\ |u_2-\mathscr{Y}(k/n)|\leq |v_2|\\ |u_3-\mathscr{X}(k/n)|\leq |v_3|}} \left| \frac{\partial^2 \phi_X}{\partial x_i \partial x_j} (u_1,u_2,u_3) \right|.
\end{equation}
By the assumptions of the proposition, we have the bounds:
\begin{equation}\label{eqn_bd_v1v2v3}
|v_1| \leq 1000 \, \epsb_k^{3/4} \frac{\log n}{\sqrt{n}}, \quad |v_2| \leq 1000 \frac{\log n}{\sqrt{n}}, \quad |v_3| \leq 1000 \, \epsb_k^{1/2} \frac{\log n}{\sqrt{n}}.
\end{equation}
On the other hand, we can compute the second order derivatives of $\phi_X$, which are of the form $\frac{P(x,y,z)}{(x+y+z)^4}$ for some polynomial $P$. By Lemma~\ref{lem_XYZ_not_too_small}, we know that $u_1$, $u_2$ and $u_3$ are respectively $O \left( \epsb_k^2 n^{1/100} \right)$, $O \left( \epsb_k n^{1/100} \right)$ and $O \left( \epsb_k^{3/2} n^{1/100} \right)$, and the sum $u_1+u_2+u_3$ is of order $\epsb_k$.  Hence, we can consider the term with the highest order in the numerator. For example, we find
\[\frac{\partial^2 \phi_X}{\partial x_1^2}=\frac{12u_2^2+28u_2u_3+10u_3^2 }{(u_1+u_2+u_3)^4}, \]
and the highest order term in the numerator is $u_2^2=O \left( \epsb_k^2 n^{1/50} \right)$. On the other hand, the denominator is of order $\epsb_k^4$, so we get
\[\frac{\partial^2 \phi_X}{\partial x_1^2} (u_1, u_2, u_3)= O \left( \epsb_k^{-2} n^{1/50} \right).\]
The bounds on $\frac{\partial^2 \phi_X}{\partial x_i \partial x_j} (u_1,u_2,u_3)$ that we obtain for all second-order partial derivatives  are summarized in the following table:

\begin{center}
\begin{tabular}{|c|c|c|c|}
	\hline
	$i \backslash j$ & 1 & 2 & 3 \\
	\hline
	1 & $O \left( \epsb_k^{-2} n^{1/50} \right)$ & $O \left( \epsb_k^{-3/2} n^{1/50} \right)$ & $O \left( \epsb_k^{-2} n^{1/50} \right)$ \\
	\hline
	2 & $O \left( \epsb_k^{-3/2} n^{1/50} \right)$ & $O \left( \epsb_k^{-1} n^{1/50} \right)$ & $O \left( \epsb_k^{-3/2} n^{1/50} \right)$ \\
	\hline
	3 & $O \left( \epsb_k^{-2} n^{1/50} \right)$ & $O \left( \epsb_k^{-3/2} n^{1/50} \right)$ & $O \left( \epsb_k^{-2} n^{1/50} \right)$ \\
	\hline
\end{tabular}
\end{center}

Combining this with~\eqref{eqn_bd_v1v2v3}, we find that each term of~\eqref{eqn_bd_second_derivative_A} is $$O \left( \frac{\epsb_k^{-1} n^{1/50} \log^2 n}{\epsb_{k+1}^{3/4} n \sqrt{n}} \right)=O \left( \frac{1}{\epsb_k n} \times \frac{n^{1/50} \log^2 n}{\epsb_{k+1}^{3/4} \sqrt{n}} \right) \underset{2 \epsb_{k+1} \geq n^{-2/5-1/100}}{=}O \left( \frac{n^{-1/30}}{\epsb_k n} \right),$$ which bounds~\eqref{eqn_drift_estimate_A_term_3}. Note that it was necessary to handle one by one the terms of~\eqref{eqn_bd_second_derivative_A} and not to bound everything crudely by $\| v \|^2 \times \| D^2 \phi_X \|$ (we would have obtained an additional factor $\epsb_k^{-1}$, which is too large).

Let us now bound~\eqref{eqn_drift_estimate_A_term_4}. We first compute the gradient of $\nabla \phi_X$:
\begin{equation}\label{eqn_gradient_phiX}
\nabla \phi_X (x,y,z) = \frac{1}{(x+y+z)^3} \left( -4y^2-9yz+xz-3z^2, 4xy+6xz+2z^2, -x^2-2yz+3xy+3xz \right).
\end{equation}
On the other hand, by~\eqref{eqn_fluidlimit_near_end}, when $ \varepsilon \to 0$, we have
\[ \mathscr{X}(t_{\ext}- \varepsilon) \sim 3  \varepsilon^2, \quad \mathscr{Y}(t_{\ext}- \varepsilon) \sim 4  \varepsilon \quad, \mathscr{Z}( t_{\ext}- \varepsilon) \sim 4\sqrt{3}  \varepsilon^{3/2}. \]
Therefore, we can replace $(x,y,z)$ in~\eqref{eqn_gradient_phiX} by $\left( \mathscr{X}(t), \mathscr{Y}(t), \mathscr{Z}(t) \right)$ and let $t \to t_{\ext}$. We find that there are constants $K, \eta>0$ such that, for any $0<  \varepsilon <  \eta$, we have:
\begin{align*}
\left| \frac{\partial \phi_X}{\partial x} \left(\left( \mathscr{X}, \mathscr{Y}, \mathscr{Z} \right)(t_{\ext}- \varepsilon)\right)-\frac{1}{ \varepsilon} \right| & \leq \frac{\delta}{ \varepsilon},\\
\left| \frac{\partial \phi_X}{\partial y} \left(\left( \mathscr{X}, \mathscr{Y}, \mathscr{Z} \right)(t_{\ext}- \varepsilon)\right) \right| & \leq K,\\
\left| \frac{\partial \phi_X}{\partial y} \left(\left( \mathscr{X}, \mathscr{Y}, \mathscr{Z} \right)(t_{\ext}- \varepsilon)\right) \right| & \leq \frac{K}{ \varepsilon^{1/2}}.
\end{align*}
This is the value of $\eta$ that we take in Proposition~\ref{prop_drift_estimates}. We can now replace $ \varepsilon$ by $ \epsb_{k} \in (0, \eta)$ and we obtain the following bound on~\eqref{eqn_drift_estimate_A_term_4}:
\begin{align*}
& \frac{\delta}{\epsb_k n} \times \frac{1}{\epsb_{k+1}^{3/4} \sqrt{n}} | A_k | + \frac{K}{n} \times \frac{1}{\epsb_{k+1}^{3/4} \sqrt{n}} |B_k| + \frac{K}{\epsb_{k}^{1/2} n} \times \frac{1}{\epsb_{k+1}^{3/4} \sqrt{n}} |C_k|\\
&= \frac{\delta}{\epsb_k n} |\widetilde{A}_k| + \frac{K}{\epsb_k n} \epsb_k^{1/4} |\widetilde{B}_k| + \frac{K}{\epsb_k n} \epsb_k^{1/4} |\widetilde{C}_k|\\
& \leq \frac{\delta}{\epsb_k n} |\widetilde{A}_k| + \frac{1000 K}{\epsb_k n} \epsb_k^{1/4} \log n.
\end{align*}

We finally treat the term~\eqref{eqn_drift_estimate_A_term_5}. We recall that $\mathscr{X}$ solves the equation $\mathscr{X}'=\phi_X (\mathscr{X}, \mathscr{Y}, \mathscr{Z})$, so this is just a linear approximation, so we will need to bound the second derivative $\mathscr{X}''$. More precisely,~\eqref{eqn_drift_estimate_A_term_5} is bounded by
\begin{equation}\label{eqn_drift_term_5_bound}
\frac{n}{\epsb_{k+1}^{3/4} \sqrt{n}} \times \left( \frac{1}{n} \right)^2 \times \max_{\left[ \frac{k}{n}, \frac{k+1}{n} \right]} \left| \mathscr{X}'' \right|.
\end{equation}
Moreover, by differentiating $\mathscr{X}'=\phi_X \left( \mathscr{X},\mathscr{Y},\mathscr{Z}\right)$, we have
\begin{align*}
\mathscr{X}''(t) &= \left( \phi_X \frac{\partial \phi_X}{\partial x} + \phi_Y \frac{\partial \phi_X}{\partial y} + \phi_Z \frac{\partial \phi_X}{\partial z} \right) \left(  \mathscr{X}(t), \mathscr{Y}(t), \mathscr{Z}(t) \right) \\
&= \frac{\mathscr{Z}(t)}{\mathscr{S}(t)^4} \left( \mathscr{X}(t)^2 -2 \mathscr{X}(t) \mathscr{Y}(t) +8 \mathscr{Y}(t) \mathscr{Z}(t) +11 \mathscr{Z}(t)^2 \right).
\end{align*}
This is a continuous function of $t$ on $[0,t_{\ext})$. Moreover, by~\eqref{eqn_fluidlimit_near_end}, we have
\[ \mathscr{X}(t_{\ext}-\eps) \sim_{\eps \to 0} \frac{4 \sqrt{3} \eps^{3/2}}{(4\eps)^4} \times 8 \times 4 \eps \times 4 \sqrt{3} \eps^{3/2} = 6,\]
so $\mathscr{X}''$ is bounded by a constant $K$. Plugging this into~\eqref{eqn_drift_term_5_bound}, we can bound~\eqref{eqn_drift_estimate_A_term_5} by $\frac{K}{\epsb_{k+1}^{3/4} n^{3/2}}=O \left( \frac{n^{-1/30}}{\epsb_k n} \right)$.

\textsc{We now move on to the estimates~\eqref{eqn_drift_estimate_B} and~\eqref{eqn_drift_estimate_C}}. Since the  proof is similar, we will not do it in full details and only stress the differences with the proof of~\eqref{eqn_drift_estimate_A}. The decomposition of the error into five terms is the same with the following modifications:
\begin{itemize}
	\item the first term~\eqref{eqn_drift_estimate_A_term_1} becomes $\left| \left( \frac{\epsb_k^{1/2}}{\epsb_{k+1}^{1/2}} - 1 \right) \widetilde{C}_k - \frac{1}{2} \frac{1}{\epsb_k n} \widetilde{C}_k  \right|$ for $\widetilde{C}$, and disappears completely for $\widetilde{B}$;
	\item in the terms~\eqref{eqn_drift_estimate_A_term_2}, \eqref{eqn_drift_estimate_A_term_3}, \eqref{eqn_drift_estimate_A_term_4} and~\eqref{eqn_drift_estimate_A_term_5}, the factors $\frac{1}{\epsb_{k+1}^{3/4}\sqrt{n}}$ become $\frac{1}{\sqrt{n}}$ for $\widetilde{B}$ and $\frac{1}{\epsb_{k+1}^{1/2}\sqrt{n}}$ for $\widetilde{C}$;
	\item in the fourth term~\eqref{eqn_drift_estimate_A_term_4}, the drift $\frac{1}{\epsb_k n} \widetilde{A}_k$ becomes $0$ for $\widetilde{B}$ and $\frac{3\sqrt{3}}{2} \widetilde{B}_k - \frac{3}{2} \widetilde{C}_k$ of $\widetilde{C}$. 
\end{itemize}
The first and second term can then be bounded by $O \left( \frac{n^{-1/30}}{\epsb_k n} \right)$ in the exact same way as for $\widetilde{A}$ (this bound actually becomes cruder for~\eqref{eqn_drift_estimate_A_term_2}, since now the factor $\epsb_{k+1}^{3/4}$ in the denominator disappears or become larger).

The bound on the fifth term~\eqref{eqn_drift_estimate_A_term_5} is also very similar: we now have
\[ \mathscr{Y}''(t)=-2\frac{\mathscr{Z}}{\mathscr{S}^4} \left( \mathscr{X} \mathscr{Y} + 4 \mathscr{Y}^2 + 8 \mathscr{X}\mathscr{Z} + 21\mathscr{Y}\mathscr{Z} + 20 \mathscr{Z}^2 \right)(t) = O \left( \left(t_{\ext}-t \right)^{-1/2} \right)\]
\[ \mathscr{Z}''(t) = 3\frac{\mathscr{Z}}{\mathscr{S}^4} \left( \mathscr{X}^2 + 4\mathscr{X}\mathscr{Y} + 4\mathscr{Y}^2 + 8\mathscr{X}\mathscr{Z} + 14\mathscr{Y}\mathscr{Z} + 11\mathscr{Z}^2 \right)(t) = O \left( \left(t_{\ext}-t \right)^{-1/2} \right). \]
Therefore, the analog of~\eqref{eqn_drift_estimate_A_term_5} for $\widetilde{B}$ (resp. $\widetilde{C}$) is $O \left( \frac{1}{\sqrt{n}} \times n \times \left( \frac{1}{n} \right)^2 \times \epsb_k^{-1/2} \right)=O \left( \frac{\epsb_k^{-1/2}}{n^{3/2}} \right)$ (resp. $O \left( \frac{\epsb_k^{-1}}{n^{3/2}} \right)$). In both cases, this is $O \left( \frac{n^{-1/30}}{\epsb_k n} \right)$.

The analog of the third term~\eqref{eqn_drift_estimate_A_term_3} is still very similar, but requires to be more careful. Indeed~\eqref{eqn_bd_second_derivative_A} becomes respectively
\begin{equation}\label{eqn_bd_second_derivative_B}
\frac{1}{\epsb_{k+1}^{1/2}\sqrt{n}} \sum_{1 \leq i,j \leq 3} |v_i| \times |v_j| \times \max_{\substack{|u_1-\mathscr{X}(k/n)|\leq |v_1|\\ |u_2-\mathscr{Y}(k/n)|\leq |v_2|\\ |u_3-\mathscr{Z}(k/n)|\leq |v_3|}} \left| \frac{\partial^2 \phi_Y}{\partial x_i \partial x_j} (u_1,u_2,u_3) \right|.
\end{equation}
and
\begin{equation}\label{eqn_bd_second_derivative_C}
\frac{1}{\sqrt{n}} \sum_{1 \leq i,j \leq 3} |v_i| \times |v_j| \times \max_{\substack{|u_1-\mathscr{X}(k/n)|\leq |v_1|\\ |u_2-\mathscr{Y}(k/n)|\leq |v_2|\\ |u_3-\mathscr{Z}(k/n)|\leq |v_3|}} \left| \frac{\partial^2 \phi_Z}{\partial x_i \partial x_j} (u_1,u_2,u_3) \right|.
\end{equation}
for $\widetilde{B}$ and $\widetilde{C}$. Moreover, when we compute the second order partial derivatives $\frac{\partial^2 \phi_Y}{\partial x_i \partial x_j}$ and $\frac{\partial^2 \phi_Y}{\partial x_i \partial x_j}$, we get respectively the following tables:
\begin{center}
	\begin{tabular}{|c|c|c|c|}
		\hline
		$i \backslash j$ & 1 & 2 & 3 \\
		\hline
		1 & $O \left( \epsb_k^{-2} n^{1/50} \right)$ & $O \left( \epsb_k^{-2} n^{1/50} \right)$ & $O \left( \epsb_k^{-2} n^{1/50} \right)$ \\
		\hline
		2 & $O \left( \epsb_k^{-2} n^{1/50} \right)$ & $O \left( \epsb_k^{-3/2} n^{1/50} \right)$ & $O \left( \epsb_k^{-2} n^{1/50} \right)$ \\
		\hline
		3 & $O \left( \epsb_k^{-2} n^{1/50} \right)$ & $O \left( \epsb_k^{-2} n^{1/50} \right)$ & $O \left( \epsb_k^{-3/2} n^{1/50} \right)$ \\
		\hline
	\end{tabular}

	\begin{center}
		\begin{tabular}{|c|c|c|c|}
			\hline
			$i \backslash j$ & 1 & 2 & 3 \\
			\hline
			1 & $O \left( \epsb_k^{-3/2} n^{1/50} \right)$ & $O \left( \epsb_k^{-3/2} n^{1/50} \right)$ & $O \left( \epsb_k^{-2} n^{1/50} \right)$ \\
			\hline
			2 & $O \left( \epsb_k^{-3/2} n^{1/50} \right)$ & $O \left( \epsb_k^{-3/2} n^{1/50} \right)$ & $O \left( \epsb_k^{-2} n^{1/50} \right)$ \\
			\hline
			3 & $O \left( \epsb_k^{-2} n^{1/50} \right)$ & $O \left( \epsb_k^{-2} n^{1/50} \right)$ & $O \left( \epsb_k^{-2} n^{1/50} \right)$ \\
			\hline
		\end{tabular}
	\end{center}
\end{center}
In both cases, using~\eqref{eqn_bd_v1v2v3}, we find that each term of~\eqref{eqn_bd_second_derivative_B} or~\eqref{eqn_bd_second_derivative_C} is $$O \left( \frac{\epsb_k^{-3/2} n^{1/50} \log^2 n}{\epsb_{k+1}^{1/2} n^{3/2}} \right) = O\left( \frac{1}{\epsb_k n} \times \frac{n^{1/50} \log^2 n}{\epsb_k \sqrt{n}} \right) \underset{\epsb_k \geq n^{- \frac{2}{5}- \frac{1}{100}}}{=} O \left( \frac{n^{-1/30}}{\epsb_k n} \right).$$

Finally, to handle the analog of the fourth term~\eqref{eqn_drift_estimate_A_term_4}, we just need to compute the gradients of $\phi_Y$ and $\phi_Z$:
\[ \nabla \phi_Y(x,y,z) = \frac{1}{(x+y+z)^3} \left( 2xy+6y^2+6yz-8z^2, -2x^2-6xy-6xz+4yz+12z^2, 8xz+4y^2+12yz \right), \]
\[ \nabla \phi_Z(x,y,z) = \frac{1}{(x+y+z)^3} \left( 3xz+9yz+15z^2, 6yz+12z^2, -3x^2-9xy-15xz-6y^2-12yz \right).\]
As in the first case, we can now replace $(x,y,z)$ by $\left( \mathscr{X}(t), \mathscr{Y}(t), \mathscr{Z}(t) \right)$, use~\eqref{eqn_fluidlimit_near_end} and identify the highest order terms in $t_{\ext}-t$. We find that there is a constant $K$ such that for all $0 \leq t < t_{\ext}$:
\begin{align*}
\left| \frac{\partial \phi_Y}{\partial x} \left( \mathscr{X}(t), \mathscr{Y}(t), \mathscr{Z}(t) \right) \right| &\leq \frac{K}{t_{\ext}-t},\\
\left| \frac{\partial \phi_Y}{\partial y} \left( \mathscr{X}(t), \mathscr{Y}(t), \mathscr{Z}(t) \right) \right| &\leq \frac{K}{(t_{\ext}-t)^{1/2}},\\
\left| \frac{\partial \phi_Y}{\partial z} \left( \mathscr{X}(t), \mathscr{Y}(t), \mathscr{Z}(t) \right) \right| &\leq \frac{K}{t_{\ext}-t},\\
\left| \frac{\partial \phi_Z}{\partial x} \left( \mathscr{X}(t), \mathscr{Y}(t), \mathscr{Z}(t) \right) \right| &\leq \frac{K}{(t_{\ext}-t)^{1/2}}.
\end{align*}
Moreover, there is $\eta>0$ (depending on $\delta$) such that, if $t_{\ext}-\eta \leq t < t_{\ext}$, then
\begin{align*}
\left| \frac{\partial \phi_Z}{\partial y} \left( \mathscr{X}(t), \mathscr{Y}(t), \mathscr{Z}(t) \right) - \frac{3\sqrt{3}}{2} \frac{1}{(t_{\ext}-t)^{1/2}} \right| &\leq \frac{\delta}{(t_{\ext}-t)^{1/2}},\\
\left| \frac{\partial \phi_Z}{\partial y} \left( \mathscr{X}(t), \mathscr{Y}(t), \mathscr{Z}(t) \right) + \frac{3}{2} \frac{1}{t_{\ext}-t} \right| &\leq \frac{\delta}{t_{\ext}-t}.
\end{align*}
From here, taking $t=\frac{k}{n}$ and replacing $\left( A_k, B_k, C_k \right)$ by $\left( \epsb_k^{3/4} \sqrt{n} \widetilde{A}_k, \sqrt{n} \widetilde{B}_k, \epsb_k^{1/2} \sqrt{n} \widetilde{C}_k \right)$, we easily obtain the claimed bound on~\eqref{eqn_drift_estimate_A_term_4}.
\end{proof}

\begin{proof}[Proof of Proposition~\ref{prop_variance_estimates}]
Just like in the proof of Proposition~\ref{prop_drift_estimates}, we first introduce the following functions (again with the notation $s=x+y+z$):
\begin{align*}
\psi_X(x,y,z) &= 4\frac{x}{s}+4\frac{xy}{s^2}+\frac{yz}{s^2}+\frac{y^2z}{s^3}+9\frac{x^2 z}{s^3}+2\frac{xyz}{s^3}+2\frac{xz^2}{s^3}+\frac{z^3}{s^3},\\
\psi_Y(x,y,z) &= \frac{xy}{s^2}+4\frac{y^2}{s^2}+4\frac{y^2z}{s^3}+2\frac{xyz}{s^3}+2\frac{xz^2}{s^3}+4\frac{z^3}{s^3},\\
\psi_Z(x,y,z) &= \frac{yz}{s^2}+\frac{y^2z}{s^3}+8\frac{yz^2}{s^3}+\frac{x^2 z}{s^3}+2\frac{xyz}{s^3}+8\frac{xz^2}{s^3}+9\frac{z^3}{s^3}.
\end{align*}
These functions are respectively the fluid limit approximations of $ \mathbb{E}[(\Delta X_{k})^{2}\mid \mathcal{F}_{k}]$, $\mathbb{E}[(\Delta Y_{k})^{2}\mid \mathcal{F}_{k}]$ and $\mathbb{E}[(\Delta Z_{k})^{2}\mid \mathcal{F}_{k}]$ and can be computed from the \rev{transition probabilities} given in Figure \ref{fig:transitions} as before. \\	
\noindent \textsc{Variance of $\widetilde{A}$}.	Let us start by establishing \eqref{eqn_variance_estimate_A}. We first note that, since adding a function of $A_k$ does not change the conditional variance on $\mathcal{F}_k$, we have
\[ \var \left( \Delta \widetilde{A}_k | \mathcal{F}_k \right) = \var \left( \Delta \widetilde{A}_k + \left( \frac{1}{\epsb_k^{3/4} \sqrt{n}}-\frac{1}{\epsb_{k+1}^{3/4} \sqrt{n}} \right) A_k | \mathcal{F}_k \right)= \frac{1}{\epsb_{k+1}^{3/2} n} \var \left( \Delta A_k | \mathcal{F}_k \right) = \frac{1}{\epsb_{k+1}^{3/2} n} \var \left( \Delta X_k | \mathcal{F}_k \right).\]
Therefore, we can write
\[\var \left( \Delta \widetilde{A}_k | \mathcal{F}_k \right) = \frac{1}{\epsb_{k+1}^{3/2} n} \E \left[ (\Delta X_k)^2 | \mathcal{F}_k \right] - \frac{1}{\epsb_{k+1}^{3/2} n} \E \left[ \Delta X_k | \mathcal{F}_k \right]^2, \]
so
\begin{align} \label{eqn_decomposition_variance_A}
\left| \var \left( \Delta \widetilde{A}_k | \mathcal{F}_k \right) - \frac{2\sqrt{3}}{\epsb_{k} n} \right| &\leq \frac{1}{\epsb_{k+1}^{3/2} n} \E \left[ \Delta X_k | \mathcal{F}_k \right]^2\\ \nonumber
&+ \frac{1}{\epsb_{k+1}^{3/2}n} \left| \E \left[ (\Delta X_k)^2 | \mathcal{F}_k \right] - \psi_X \left( \frac{X_k}{n}, \frac{Y_k}{n}, \frac{Z_k}{n} \right) \right|\\ \nonumber
&+ \frac{1}{\epsb_{k+1}^{3/2}n} \left| \psi_X \left( \frac{X_k}{n}, \frac{Y_k}{n}, \frac{Z_k}{n} \right) -  \psi_X \left( \mathscr{X} \left( \frac{k}{n} \right), \mathscr{Y} \left( \frac{k}{n} \right), \mathscr{Z} \left( \frac{k}{n} \right) \right) \right|\\
&+ \frac{1}{\epsb_{k+1}^{3/2}n} \left| \psi_X \left( \mathscr{X} \left( \frac{k}{n} \right), \mathscr{Y} \left( \frac{k}{n} \right), \mathscr{Z} \left( \frac{k}{n} \right) \right) -  {\left({2\sqrt{3} \sqrt{\epsb_{k}}}\right) }\right|  \nonumber \\ 
& {+ \frac{1}{\epsb_{k+1}^{3/2}n} \left| \left(2\sqrt{3} \sqrt{\epsb_{k}}\right)- \left({2\sqrt{3} \sqrt{\epsb_{k+1}}}\right) \right|} \nonumber
\end{align}
The first term can be bounded by using Proposition~\ref{prop_drift_estimates} and $ \epsb_{k} \geq n^{-2/5-1/100}$.
Moreover, by the exact same argument as for term~\eqref{eqn_drift_estimate_A_term_2} in the proof of Proposition~\ref{prop_drift_estimates}, the second term is
\[ O  \left( \frac{1}{\epsb_{k+1}^{3/2} n} \times \frac{1}{S_k} \right) \underset{ \mathrm{Lem.} \ref{lem_XYZ_not_too_small}}= O \left( \frac{1}{\epsb_k^{5/2} n^2} \right) \underset{\epsb_k \geq n^{- \frac{2}{5}- \frac{1}{100}}}{=} O \left( \frac{n^{-1/30}}{\epsb_k n} \right).\]

We now bound the third term of~\eqref{eqn_decomposition_variance_A}. If we write $(v_1,v_2,v_3)=\left( \frac{A_k}{n}, \frac{B_k}{n}, \frac{C_k}{n} \right)$, this is bounded by
\begin{equation}\label{eqn_bound_derivative_psi}
\frac{1}{\epsb_{k+1}^{3/2} \sqrt{n}} \sum_{i=1}^3 |v_i| \times \max_{\substack{|x-\mathscr{X}(k/n)|\leq |v_1| \\ |y-\mathscr{Y}(k/n)|\leq |v_2| \\ |z-\mathscr{Z}(k/n)|\leq |v_3|}} \left| \frac{\partial \psi_X}{\partial x}(x,y,z) \right|.
\end{equation}
Just like for $\phi_X$ in the proof of Proposition~\ref{prop_drift_estimates}, we can compute the gradient of $\psi_X$: the partial derivatives are of the form $\frac{P(x,y,z)}{(x+y+z)^4}$, where $P$ is a homogeneous polynomial of degree $3$. By using Lemma~\ref{lem_XYZ_not_too_small}, just like in~\eqref{eqn_bd_second_derivative_A}, we have that $x$, $y$ and $z$ are respectively $O \left( \epsb_k^2 n^{1/100} \right)$, $O \left( \epsb_k n^{1/100} \right)$ and $O \left( \epsb_k^{3/2} n^{1/100} \right)$ and that the sum $x+y+z$ is of order $\epsb_k$. Therefore, by considering the higher order terms in the polynomial $P(x,y,z)$, we obtain the following estimates:
\[ \frac{\partial \psi_X}{\partial x}(x,y,z) = O \left( \epsb_k^{-1} n^{3/100} \right), \quad \frac{\partial \psi_X}{\partial y}(x,y,z) = O \left( \epsb_k^{-1/2} n^{3/100} \right), \quad \frac{\partial \psi_X}{\partial z}(x,y,z) = O \left( \epsb_k^{-1} n^{3/100} \right).\]
Combining this with~\eqref{eqn_bound_derivative_psi}, we get that the third term of~\eqref{eqn_decomposition_variance_A} is
\[ O \left( \frac{n^{3/100} \log n}{\epsb_k^2 n^{3/2}} \right).\]
using $\epsb_k \geq n^{-2/5-1/100}$, this is $O \left( \frac{n^{-1/30}}{\epsb_k n} \right)$.

We now bound the fourth term of~\eqref{eqn_decomposition_variance_A}. For this, we use again~\eqref{eqn_fluidlimit_near_end}. In particular, when we write down $\psi_X \left( \mathscr{X}, \mathscr{Y}, \mathscr{Z}  \right)(t_{\ext}-\eps)$, the highest order terms in $\eps$ are $\frac{\mathscr{Y}\mathscr{Z}}{\mathscr{S}^2} \sim \sqrt{3} \sqrt{\eps}$ and $\frac{\mathscr{Y}^2\mathscr{Z}}{\mathscr{S}^3} \sim \sqrt{3} \sqrt{\eps}$, so we have
\[ \psi_X \left( \mathscr{X}, \mathscr{Y}, \mathscr{Z}  \right)(t_{\ext}-\eps) \sim_{\eps \to 0} 2\sqrt{3} \sqrt{\eps}.\]
In particular, taking $\eps=\epsb_k$, if we choose $\eta$ small enough the fourth term of~\eqref{eqn_decomposition_variance_A} is bounded by $\frac{\delta}{\epsb_k n}$. Finally the fifth term is also smaller than $\frac{\delta}{\epsb_k n}$ if $ \epsb_{k}$ is small enough. Gathering-up the pieces we have established \eqref{eqn_variance_estimate_A}.

The bound~\eqref{eqn_variance_estimate_A_bis} follows from the same proof by noticing that the only term of \eqref{eqn_decomposition_variance_A} which makes the errors $\frac{\widetilde{A}^2}{\epsb_k^{1/2} n}, \frac{\widetilde{B}^2}{n}, \frac{\widetilde{C}^2}{n}$ appear is the $-\E \left[ \Delta X_k | \mathcal{F}_k \right]^2$, which is negative.

\noindent \textsc{Variance of $\widetilde{B}$}. The bound~\eqref{eqn_variance_estimate_B} is immediate: for the same reason as with $\widetilde{A}$, we have
\[\var \left( \Delta \widetilde{B}_k | \mathcal{F}_k \right) = \frac{1}{n} \var \left( \Delta Y_k | \mathcal{F}_k \right) \leq \frac{9}{n},\]
since $|\Delta Y_k|$ is bounded by $3$.

\noindent \textsc{Variance of $\widetilde{C}$}. Finally, we prove~\eqref{eqn_variance_estimate_C}: as before, we can write
\begin{align*}
\var \left( \Delta \widetilde{C}_k | \mathcal{F}_k \right) &= \frac{1}{\epsb_{k+1} n} \E \left[ (\Delta Z_k)^2 | \mathcal{F}_k \right] - \E \left[ \Delta Z_k | \mathcal{F}_k \right]^2\\
&\leq \frac{1}{\epsb_{k+1} n} \left| \E \left[ (\Delta Z_k)^2 | \mathcal{F}_k \right] - \Psi_Z \left( \frac{X_k}{n}, \frac{Y_k}{n}, \frac{Z_k}{n}\right) \right|
+ \frac{1}{\epsb_{k+1} n} \Psi_Z \left( \frac{X_k}{n}, \frac{Y_k}{n}, \frac{Z_k}{n}\right).
\end{align*}
By the exact same argument as for $\widetilde{A}$, the first term is
\[O \left( \frac{1}{\epsb_{k+1} n} \frac{1}{S_k} \right)=O \left( \frac{1}{\epsb_k^2 n^2} \right) = O \left( \frac{\epsb_k^{1/2}}{\epsb_k n} \right),\]
where the first equality comes from Lemma~\eqref{lem_XYZ_not_too_small} and the second from $\epsb_k \geq n^{-2/5-1/100}$. On the other hand, noticing that every term in $\psi_Z(x,y,z)$ has a factor $\frac{z}{s}$, we can write
\[ \psi_Z \left( \frac{X_k}{n}, \frac{Y_k}{n}, \frac{Z_k}{n} \right) = O \left( \frac{Z_k}{S_k} \right) = O \left( \frac{\epsb_k^{3/2} n}{\epsb_k n} \right) = O \left( \epsb_k^{1/2} \right),\]
where the second inequality comes from Lemma~\ref{lem_XYZ_not_too_small}. This proves~\eqref{eqn_variance_estimate_C}.

\end{proof}

\subsection{ Rough behaviour of $ \widetilde{A}, \widetilde{B}$ and $ \widetilde{C}$}
In this section we will use our drift and variance estimates to control $ \widetilde{A}, \widetilde{B}, \widetilde{C}$. We shall get a rather rough control on $ \widetilde{A}, \widetilde{B}$ and $ \widetilde{C}$ (Proposition \ref{prop:goodregion}) and later refine the one on $ \widetilde{A}$. In the rest of this subsection, on top of the constant $K>0$ given by Propositions \ref{prop_drift_estimates} and \ref{prop_variance_estimates}, we fix $$\delta = \frac{1}{100}$$ for definiteness and let  $ 0< \eta\equiv \eta(\delta) <1/2$ so that we can apply the above propositions. In particular, the value of $\eta$ does not depend on $n$, nor on the coming $ \epsilon>0$ and \emph{its value may be decreased for convenience} by keeping the same $\delta$. In the coming pages $ \mathrm{Cst}>0$ is a constant (which may depend on the constant $K$ or the now-fixed $\delta= \frac{1}{100}$) and  that may increase from line to line, but whose value does not depend upon $n$ (provided it is large enough), nor $ \eta$, nor on the forthcoming $ \epsilon$. On the contrary $K_{ \epsilon}$ is a constant that depends upon $ \epsilon$ but also upon $\eta$ in an implicit way. 

\rev{Recall the notation $\epsb_k$ from~\eqref{eq:notationreste} and the notation $\widetilde{\theta}$ from~\eqref{eq:defthetatilde}}.
The value $\eta$ shall give our ``starting scale" $k_{0}= \lfloor (t_{ \mathrm{ext}}-\eta)n \rfloor $ which is such that $ \epsb_{k_0} = \eta$ and we shall then look at times $ k_0 \leq k \leq \tilde{\theta}$.   
We start by controlling the fluctuations at $k_{0}$. 

\begin{lemma}[Fluctuations in the bulk] \label{lem:enterregion}
For all $ \epsilon >0$ there exists $K_ { \epsilon}>0$ so that for all $n$ large enough, with probability at least $ 1 - \epsilon$ we have
\begin{equation}\label{eq:initialisationk0} \max (| \widetilde{A}_{k_0}|, |\widetilde{B}_{k_0}|,| \widetilde{C}_{k_0}|) < K_ \epsilon \mbox{ and } \widetilde{\theta} > k_0 . \end{equation} 
\end{lemma}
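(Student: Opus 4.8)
Since $\eta$ is a \emph{fixed} constant (depending only on $\delta=1/100$), the claimed bound on $\widetilde A_{k_0},\widetilde B_{k_0},\widetilde C_{k_0}$ is equivalent to the estimate $(A_{k_0},B_{k_0},C_{k_0})=O_{\mathbb{P}}(\sqrt n)$, where $A_k=X_k-n\mathscr{X}(k/n)$, $B_k=Y_k-n\mathscr{Y}(k/n)$ and $C_k=Z_k-n\mathscr{Z}(k/n)$; that is, a Gaussian-scale refinement of the fluid limit of Proposition~\ref{prop:fluid-limit} over the \emph{compact} interval $[0,t_{\ext}-\eta]$. The part $\widetilde\theta>k_0$ is immediate: $\theta^n/n\to t_{\ext}>t_{\ext}-\eta$ in probability, while $t_{\ext}n-n^{3/5-1/100}>k_0$ for $n$ large. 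For the main estimate I would run the standard second-moment (martingale) version of the differential equation method, as follows.

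First I would set up the good region: fix a small $\eta_0>0$ and let $\mathcal{K}\subseteq\mathbb{R}_{\geq0}^3$ be the closed $2\eta_0$-tube around the fluid-limit trajectory on $[0,t_{\ext}-\eta]$; since $\mathscr{S}$ is continuous and strictly positive on this compact set, $\eta_0$ can be taken so that $x+y+z\geq c_0>0$ on $\mathcal{K}$. On $\mathcal{K}$ the drift $\phi$ is a rational map with non-vanishing denominator, hence $C^\infty$ and Lipschitz with a fixed constant $L$, and $\mathscr{X}'',\mathscr{Y}'',\mathscr{Z}''$ are bounded on $[0,t_{\ext}-\eta]$ (they are rational expressions in $\mathscr{X},\mathscr{Y},\mathscr{Z},\mathscr{S}$, exactly as in the computations at the end of the proof of Proposition~\ref{prop_drift_estimates}). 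Introduce the $(\mathcal{F}_k)$-stopping time $\tau=k_0\wedge\inf\{k:n^{-1}(X_k,Y_k,Z_k)\notin\mathcal{K}\}$, so that $S_k\geq c_0 n$ for all $k<\tau$. On $\{k<\tau\}$ I would bound the one-step conditional increments of $(A,B,C)$: exactly as for term~\eqref{eqn_drift_estimate_A_term_2} in the proof of Proposition~\ref{prop_drift_estimates}, the transitions of Figure~\ref{fig:table} give $\E[\Delta X_k\mid\mathcal{F}_k]=\phi_X(X_k/n,Y_k/n,Z_k/n)+O(1/S_k)$; a Taylor expansion of $\mathscr{X}$ with $\mathscr{X}''$ bounded gives $n(\mathscr{X}((k+1)/n)-\mathscr{X}(k/n))=\phi_X(\mathscr{X},\mathscr{Y},\mathscr{Z})(k/n)+O(1/n)$; and the Lipschitz property of $\phi_X$ on $\mathcal{K}$ bounds the remaining difference by $\tfrac{L}{n}\|(A_k,B_k,C_k)\|$. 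Together with the analogues for $Y,Z$ this yields $|\E[\Delta A_k\mid\mathcal{F}_k]|,|\E[\Delta B_k\mid\mathcal{F}_k]|,|\E[\Delta C_k\mid\mathcal{F}_k]|\leq\tfrac{C}{n}(1+\|(A_k,B_k,C_k)\|)$ on $\{k<\tau\}$, while $\E[(\Delta A_k)^2\mid\mathcal{F}_k]$ etc.\ are $O(1)$ by boundedness of the increments of $(X,Y,Z)$ and of $\mathscr{X}',\mathscr{Y}',\mathscr{Z}'$.

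Then, setting $\Phi_k=A_k^2+B_k^2+C_k^2$ and expanding $\Delta\Phi_k=\sum 2A_k\Delta A_k+(\Delta A_k)^2+\dots$, the previous bounds give $\E[\Delta\Phi_k\mid\mathcal{F}_k]\leq\tfrac{C'}{n}\Phi_k+C'$ on $\{k<\tau\}$, hence $\E[\Phi_{(k+1)\wedge\tau}]\leq(1+\tfrac{C'}{n})\E[\Phi_{k\wedge\tau}]+C'$. Since $\Phi_0=O(1)$ by~\eqref{eq:convcrit} and $k_0\leq n$, discrete Gronwall gives $\E[\Phi_{k_0\wedge\tau}]=O(n)$, and Markov's inequality produces $M=M(\epsilon)$ with $\mathbb{P}(\Phi_{k_0\wedge\tau}>Mn)\leq\epsilon/2$ for $n$ large. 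On the other hand, Proposition~\ref{prop:fluid-limit} provides an event $\Omega_1$ of probability $\geq1-\epsilon/2$ (for $n$ large) on which $\theta^n>k_0$ and $n^{-1}(X_k,Y_k,Z_k)$ stays within $\eta_0$ of $(\mathscr{X},\mathscr{Y},\mathscr{Z})(k/n)$ for all $0\leq k\leq k_0$; on $\Omega_1$ one automatically has $\tau=k_0$ (as $\eta_0<2\eta_0$) and $\widetilde\theta>k_0$. On $\Omega_1\cap\{\Phi_{k_0\wedge\tau}\leq Mn\}$, of probability $\geq1-\epsilon$, one reads off $|A_{k_0}|,|B_{k_0}|,|C_{k_0}|\leq\sqrt{Mn}$, hence $|\widetilde A_{k_0}|\leq\sqrt M\,\eta^{-3/4}$, $|\widetilde B_{k_0}|\leq\sqrt M$, $|\widetilde C_{k_0}|\leq\sqrt M\,\eta^{-1/2}$ together with $\widetilde\theta>k_0$; taking $K_\epsilon=\sqrt M\max(1,\eta^{-3/4})$ concludes.

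I do not expect a genuine obstacle: this is a textbook CLT-scale refinement of the fluid limit, and all the transition-probability estimates it needs were already produced in the course of proving Proposition~\ref{prop_drift_estimates}. The two things to be careful about are the bookkeeping with the stopping time $\tau$ (the drift/variance bounds hold only inside $\mathcal{K}$, which is why one localises rather than conditioning on the non-$\mathcal{F}_k$-measurable fluid-limit event), and the conceptual point underlying the whole argument: on $[0,t_{\ext}-\eta]$ the mass $\mathscr{S}$ is bounded away from $0$, so $\phi$ is Lipschitz with a fixed constant. This breaks down as $t\to t_{\ext}$, where $\nabla\phi$ blows up like $\epsb_k^{-1}$, and handling that regime is precisely the purpose of the multi-scale analysis carried out in the rest of this section.
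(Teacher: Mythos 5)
Your argument is correct, but it follows a genuinely different route from the paper. The paper disposes of this lemma by citing the Ethier--Kurtz diffusion-approximation theorem for density-dependent Markov chains: since $\mathscr{X},\mathscr{Y},\mathscr{Z}$ stay bounded away from $0$ on $[0,t_{\ext}-\eta]$ (this is exactly \eqref{eq:inf>0}) and the initial fluctuations are $O(1)$ by \eqref{eq:convcrit}, the processes $n^{-1/2}(A_{\lfloor tn\rfloor},B_{\lfloor tn\rfloor},C_{\lfloor tn\rfloor})$ converge weakly on $[0,t_{\ext}-\eta]$ to a Gaussian diffusion, which gives both tightness of the terminal fluctuations and $X_k>0$ up to $k_0$ with high probability. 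You instead run a self-contained second-moment argument: localize with the stopping time $\tau$ at the exit of a tube where $\phi$ is Lipschitz and $\mathscr{X}''$ bounded, bound one-step drifts by $\tfrac{C}{n}(1+\|(A_k,B_k,C_k)\|)$ and one-step second moments by $O(1)$, apply discrete Gronwall to $\Phi_k=A_k^2+B_k^2+C_k^2$, and conclude by Markov's inequality together with the in-probability fluid limit of Proposition~\ref{prop:fluid-limit} to ensure $\tau=k_0$ and $\widetilde\theta>k_0$. This buys elementarity (no black-box functional CLT, only the estimates already needed for Proposition~\ref{prop_drift_estimates}, restricted to the bulk where $\mathscr{S}$ is bounded below), at the price of yielding only tightness rather than the Gaussian limit --- which is all the lemma requires. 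One small point to make explicit: on $\{k<\tau\}$ your drift computation uses the transition probabilities of Figure~\ref{fig:table}, which are only valid while $X_k>0$ (the chain is frozen after $\theta^n$); you impose only $x+y+z\geq c_0$ on $\mathcal{K}$, so you should also shrink $\eta_0$ below $\min_{[0,t_{\ext}-\eta]}\mathscr{X}>0$ (again \eqref{eq:inf>0}) so that the tube avoids $\{x=0\}$ and $k<\tau$ automatically entails $k<\theta^n$; with that adjustment the bookkeeping goes through exactly as you describe.
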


\proof Classical results entail that on top of the law of large numbers for the process $ n^{-1}\cdot ({X}^n, {Y}^n, {Z}^n)$ given in Proposition \ref{prop:fluid-limit}, we have a functional central limit theorem for their fluctuations, as long as we stay in the bulk. More precisely, for $ 0 \leq t \leq (t_ \mathrm{ext}- \eta)$, the solution given by the differential equation \eqref{eqn_diff_system_primo} is bounded away from $0$, i.e.   \begin{eqnarray} \label{eq:inf>0}\inf \{\min( \mathscr{X}(t), \mathscr{Y}(t), \mathscr{Z}(t)) :   0 \leq t \leq (t_ \mathrm{ext}- \eta)\} >0, \end{eqnarray} and thanks to our hypothesis \eqref{eq:convcrit}, the initial fluctuations $A_0$, $B_0$ and $C_0$ are bounded so that $(A_0, B_0,C_0)/ \sqrt{n}$ converges to $(0,0,0)$\footnote{We could have allowed $o( \sqrt{n})$ fluctuations, but not $o(n)$ as in Theorem \ref{thm:phasetransition}.}. 
Therefore, we can apply \cite[Theorem 2.3 p 458]{ethier2009markov}, which implies that
$$ \left(\left(\frac{A_{\lfloor t n\rfloor}}{ \sqrt{n}}, \frac{B_{\lfloor t n\rfloor}}{ \sqrt{n}},\frac{C_{\lfloor t n\rfloor}}{ \sqrt{n}}\right): 0 \leq t \leq t_ \mathrm{ext} - \eta\right) $$ converges as $n$ goes to infinity weakly to a continuous random processes driven by a nice stochastic differential equation. Furthermore \cite[Theorem 2.3 p 458]{ethier2009markov} entails that the terminal value $$\left(\frac{A_{\lfloor (t_ \mathrm{ext}- \eta) n\rfloor}}{ \sqrt{n}}, \frac{B_{\lfloor (t_ \mathrm{ext}- \eta) n\rfloor}}{ \sqrt{n}},\frac{C_{\lfloor (t_ \mathrm{ext}- \eta) n\rfloor}}{ \sqrt{n}}\right)$$ converges towards a Gaussian law whose covariance depends on $\eta$ only. Given \eqref{eq:inf>0}, this implies that w.h.p.~we have $X_{k}>0$ for all $0 \leq k \leq ( t_{ \mathrm{ext}}-\eta)n$ (in other words $\theta > (t_ \mathrm{ext} - \eta) n$) and that $ |\widetilde{A}_{\lfloor (t_ \mathrm{ext}- \eta) n\rfloor}|,|\widetilde{B}_{\lfloor (t_ \mathrm{ext}- \eta) n\rfloor}|, |\widetilde{C}_{\lfloor (t_ \mathrm{ext}- \eta) n\rfloor}|$ are tight. The statement of the lemma follows.  \endproof

After this initial control, we shall provide a rough upper bound on the fluctuation processes. 
\begin{proposition}[Rough upper bounds]\label{prop:goodregion} For all $ \epsilon >0$,  there exists a constant $ K_{ \epsilon} > 0$  such that for $n$ large enough, with probability at least $ 1 - \epsilon$ we have 
\begin{align}
\max_{k_{0} \leq k < \tilde{\theta}}\left\{\frac{|  \widetilde{A}_{ k}|}{|\log ( \epsb_k)|^{{3/4}}}, | \widetilde{B}_{ k}|,| \widetilde{C}_{k}| \right\} & \leq K_{ \epsilon}.
\end{align}
\end{proposition}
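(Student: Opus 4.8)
The plan is to run the three renormalised fluctuation processes simultaneously and control the first time $\tau\ge k_0$ at which one of the three inequalities in the statement fails (with $\tau=\tilde{\theta}$ if none does), and to show $\P(\tau<\tilde{\theta})\le\epsilon$; since the increments of $\widetilde{A},\widetilde{B},\widetilde{C}$ are $o(1)$, the overshoot at $\tau$ is negligible. I would fix $\epsilon$, then \emph{decrease} $\eta$ (keeping $\delta=\tfrac1{100}$, as allowed) so that the geometric and integral sums below become small, and finally choose $K_\epsilon$ large. On $\{k<\tau\}$ and for $n$ large one has $|\widetilde{A}_k|,|\widetilde{B}_k|,|\widetilde{C}_k|<1000\log n$ because $|\log\epsb_k|^{3/4}\le(\log n)^{3/4}=o(\log n)$ on $\epsb_k\ge n^{-2/5-1/100}$, so Propositions~\ref{prop_drift_estimates} and~\ref{prop_variance_estimates} apply to the $\tau$-stopped process throughout; since every feedback between the coordinates carries a positive power of $\epsb_k$, running the three estimates under a single $\tau$ is free of circularity. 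I would then work on dyadic scales: for integers $j$ with $2^{-j}\in[n^{-2/5-1/100},\eta]$ — there are $O(\log n)$ of them — set $T_j=\inf\{k\ge k_0:\epsb_k\le2^{-j}\}$, so scale $j$ lasts of order $2^{-j-1}n$ steps, $|\log\epsb_k|$ is of order $j$ on it, and $\sum_{T_j\le l<T_{j+1}}\tfrac1{\epsb_l n}=\ln2+o(1)$. Lemma~\ref{lem:enterregion} supplies the initial control at $T_{j_0}\approx k_0$ outside probability $\epsilon/4$.

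\textbf{The mild coordinates.} I would treat $\widetilde{B}$ and $\widetilde{D}:=\widetilde{C}-\tfrac{3\sqrt3}{2}\widetilde{B}$ first, as neither carries destabilising feedback. For $\widetilde{B}$: by~\eqref{eqn_variance_estimate_B} the martingale part of its Doob decomposition has total predictable bracket $\sum_k\var(\Delta\widetilde{B}_k|\mathcal{F}_k)\le K\eta$, hence exceeds $K_\epsilon/4$ only with probability $O(K\eta/K_\epsilon^2)\le\epsilon/8$ by Doob's $L^2$ inequality; and by~\eqref{eqn_drift_estimate_B} the total variation of its drift is $\le K\sum_k\tfrac1{\sqrt{\epsb_k}\,n}\max(|\widetilde{A}_k|,|\widetilde{B}_k|,|\widetilde{C}_k|)$, which, using the bounds valid before $\tau$ and the change of variable $\epsb=\mathrm e^{-u}$, is $O\!\big(KK_\epsilon(\log\tfrac1\eta)^{3/4}\sqrt\eta\big)\le K_\epsilon/8$ once $\eta$ is small. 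So $|\widetilde{B}|<K_\epsilon$ before $\tau$ with the stated probability. Subtracting $\tfrac{3\sqrt3}{2}$ times~\eqref{eqn_drift_estimate_B} from~\eqref{eqn_drift_estimate_C} shows $\E[\Delta\widetilde{D}_k|\mathcal{F}_k]=-\tfrac1{\epsb_k n}\widetilde{D}_k+O\!\big(\tfrac{\delta+\sqrt{\epsb_k}}{\epsb_k n}|\widetilde{B}_k|+\tfrac{\sqrt{\epsb_k}+\epsb_k^{3/4}}{\epsb_k n}|\widetilde{A}_k|+\tfrac{n^{-1/30}}{\epsb_k n}\big)$, a genuine restoring drift of rate $\tfrac1{\epsb_k n}$, while $\var(\Delta\widetilde{D}_k|\mathcal{F}_k)\le C/(\sqrt{\epsb_k}\,n)$ by~\eqref{eqn_variance_estimate_B}--\eqref{eqn_variance_estimate_C}. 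Writing $\widetilde{D}_{l+1}=(1-\tfrac1{\epsb_l n})\widetilde{D}_l+\rho_l+\Delta M^{D}_l$ with $\mathcal{F}_l$-measurable error $\rho_l$ and a martingale increment $\Delta M^{D}_l$, the coefficient is deterministic, so the discrete variation-of-constants formula applies; the damping decays geometrically in $\log(\epsb_i/\epsb_k)$, so the error sum is dominated by its terms with $\epsb_i$ of order $\epsb_k$ and is $O(\delta K_\epsilon)$, the noise has bracket $O(\sqrt{\epsb_k})\to0$, and the initial value is geometrically forgotten — so $|\widetilde{D}|$, and with it $|\widetilde{C}|\le|\widetilde{D}|+\tfrac{3\sqrt3}{2}|\widetilde{B}|$, stays below a fixed multiple of $K_\epsilon$.

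\textbf{The critical coordinate $\widetilde{A}$.} This is the heart of the proof and must be done scale by scale, exploiting the restoring drift $-\tfrac1{4\epsb_k n}\widetilde{A}_k$ of~\eqref{eqn_drift_estimate_A}. Since $\delta<\tfrac14$, the deterministic coefficients $1-\tfrac1{4\epsb_l n}$ of the corresponding linear recursion (with the remaining $\tfrac\delta{\epsb_l n}|\widetilde{A}_l|$ absorbed into the error) lie in $(0,1)$, so the variation-of-constants formula on scale $j$ gives: if $|\widetilde{A}_{T_j}|\le\tfrac12K_\epsilon|\log\epsb_{T_j}|^{3/4}$, then for every $T_j\le k\le T_{j+1}$,
\[ |\widetilde{A}_k|\ \le\ |\widetilde{A}_{T_j}|+O\!\big(KK_\epsilon2^{-j/4}\big)+o(1)+|N_k|, \]
where $N$ is, up to a bounded deterministic factor, a martingale with bracket $O(1)$ (equal to $(2\sqrt3+\delta)\ln2+o(1)$, by~\eqref{eqn_variance_estimate_A_bis}) and increments $|\Delta N_k|\le C\,2^{3j/4}/\sqrt n=o(1)$, and the $O(KK_\epsilon2^{-j/4})$ collects the feedback of $\widetilde{B},\widetilde{C}$ and the $n^{-1/30}$ error summed over the scale (using $\sum_{T_j\le l<T_{j+1}}\epsb_l^{-3/4}/n=O(2^{-j/4})$). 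The decisive idea is to assign a \emph{geometrically decaying} failure budget $p_j=\epsilon\,2^{-(j-j_0)}/8$ to scale $j$ (so $\sum_jp_j\le\epsilon/8$): by Freedman's inequality — the exponential tail for martingales with small increments — one gets $\P(\sup_{[T_j,T_{j+1}]}|N_k|>\lambda_j)\le p_j$ with $\lambda_j=C_\epsilon\sqrt j$ and $C_\epsilon$ not depending on $K_\epsilon$. Because $|\log\epsb_{T_j}|^{3/4}\gtrsim j^{3/4}\ge\sqrt j$, choosing $\eta$ small (so $O(KK_\epsilon2^{-j/4})=O(K_\epsilon\eta^{1/4})\cdot\mathrm{Cst}$ is a tiny fraction of $K_\epsilon j^{3/4}$) and then $K_\epsilon$ large compared with $C_\epsilon$, the displayed bound yields simultaneously $|\widetilde{A}_k|\le K_\epsilon|\log\epsb_k|^{3/4}$ throughout scale $j$ \emph{and} $|\widetilde{A}_{T_{j+1}}|\le\tfrac12K_\epsilon|\log\epsb_{T_{j+1}}|^{3/4}$ — the second because over the \emph{whole} scale the damping is $\exp\!\big(-(\tfrac14-O(\delta))\ln2\big)\le0.87<1$, which restores the factor $\tfrac12$ with room to spare. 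Hence the induction propagates; summing the $O(\log n)$ scale failure probabilities and those of $\widetilde{B}$, $\widetilde{D}$ and the initialisation, the total is $\le\epsilon$, which gives $\P(\tau<\tilde{\theta})\le\epsilon$ after enlarging $K_\epsilon$ to unify the three coordinate radii.

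\textbf{Main obstacle.} The hard part is precisely the $\widetilde{A}$ step: one must verify that the $-\tfrac14$ drift produces a true multiplicative contraction over each dyadic scale dominating the order-one noise injected per scale, carry through the induction a bound with built-in slack (the factor $\tfrac12$, equivalently the gap $1-0.87$) so that it self-improves, and realise that the exponent $\tfrac34>\tfrac12$ in $|\log\epsb_k|^{3/4}$ is exactly what makes a geometrically shrinking failure budget — and hence noise of size only $\lesssim\sqrt j$ at scale $j$, obtained from an exponential martingale tail rather than from $L^2$ alone — enough to close the argument uniformly over all $\Theta(\log n)$ scales. Everything else is bookkeeping: choosing $\eta$ small and $K_\epsilon$ large so that each accumulated error term is a small fraction of the relevant bound, and tracking the (a priori distinct) constants for the three coordinates until unifying them at the end.
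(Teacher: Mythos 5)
Your proposal is correct in substance but takes a genuinely different route from the paper on the two delicate coordinates. For $\widetilde{B}$ the arguments coincide (drift made small by shrinking $\eta$, martingale via Doob's $L^2$ inequality, initialisation via Lemma~\ref{lem:enterregion}). For $\widetilde{E}=\widetilde{C}-\frac{3\sqrt{3}}{2}\widetilde{B}$ (your $\widetilde{D}$) and for $\widetilde{A}$, the paper never unrolls the linear recursion: it uses an excursion argument -- look at the last time before the exit time at which the process sat below half the barrier, observe that on that excursion the restoring drift works \emph{against} the process, so the martingale part alone must climb half the barrier -- and then only Doob's $L^2$ inequality; for $\widetilde{A}$ the resulting double sum over the scales $(I,J)$ of the excursion endpoints converges precisely because the barrier $K_\epsilon(j+1)^{3/4}$ grows, giving a summable factor $(j+1)^{-3/2}$ (any exponent $>1/2$ would do, as the paper remarks). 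You instead run a per-scale contraction induction by variation of constants and control the per-scale noise with Freedman's exponential inequality and a geometrically decaying failure budget, using the exponent only through $j^{3/4}\gg\sqrt{j}$. Both close; yours uses heavier concentration but is modular and would give a slightly sharper barrier, while the paper stays at second moments and never has to control a stochastic convolution uniformly in time. Two points of yours need firming up: (i) for $\widetilde{D}$, the pointwise bracket bound $O(\sqrt{\epsb_k})$ does not by itself control the supremum of the convolution over all times in the range; you need a per-scale maximal inequality (factor out the one-scale damping, which is bounded below, and use that the per-scale brackets $O(\sqrt{\eta\,2^{-m}})$ are summable), or simply the paper's excursion trick; (ii) in the $\widetilde{A}$ step you both absorb $\frac{\delta}{\epsb_l n}|\widetilde{A}_l|$ into the error and later credit the damping as $\exp(-(\frac14-O(\delta))\ln 2)$ -- choose one bookkeeping; either works since $\delta=\frac{1}{100}$, but the absorbed version adds an $O(\delta K_\epsilon j^{3/4})$ term to your displayed within-scale bound that must be weighed against the slack between $\frac12 K_\epsilon j^{3/4}$ and $K_\epsilon j^{3/4}$.
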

\begin{remark}[The truth] The proof of the proposition shows that we can replace the $3/4$ exponent by $1/2 +\delta$ for all $\delta >0$.  We anyway expect an ``iterated logarithm'' behavior for $ \widetilde{A}$ so that we could replace $|\log ( \epsb_{k})|$ by $| \log  \log  (\epsb_{k})|$. In the same vein, a little more effort would  yield that $ \widetilde{B}$ and $ \widetilde{C }$ ``converge"\footnote{To be precise, and stressing the dependence in $n$,  the processes $ (\widetilde{B}^n_{[tn] \wedge \theta^n} : t \in [0, t_{ \mathrm{ext}}])$ converge in law for the $\|\|_\infty$ distance towards a limiting process $ (\mathcal{B}_t : t \in [0, t_{ \mathrm{ext}}])$ which is continuous and in particular continuous at $ t_{ \ext}$. Similarly $ (\widetilde{C}^n_{[tn] \wedge \theta^n} : t \in [0, t_{ \mathrm{ext}}]) \to (\mathcal{C}_t : t \in [0, t_{ \mathrm{ext}}])$ for a random continuous process and furthermore $ \mathcal{C}_{t_{ \mathrm{ext}}} = \frac{3 \sqrt{3}}{2} \mathcal{B}_{t_ \mathrm{ext}}$.} but our estimates will be largely sufficient for our purposes. 
\end{remark}

\proof In light of the form of the drift of $\widetilde{C}$ obtained in Equation \eqref{eqn_drift_estimate_C}, we will rather consider the process $ \widetilde{E}_k = \widetilde{C}_k - \frac{3 \sqrt{3}}{2} \widetilde{B}_k$ instead of $ \widetilde{C}$, but notice we can control $ | \widetilde{C}_{k}| \leq \frac{3 \sqrt{3}}{2} | \widetilde{B}_{k}| + | \widetilde{E}_{k}|$ using the processes $ \widetilde{B}$ and $ \widetilde{E}$ so that it is sufficient to prove the proposition after replacing $ \widetilde{C}$ by $ \widetilde{E}$. Introduce $L$ the first time at which one of the those three processes becomes large, i.e.\ 
\begin{align*}
 L &= \tilde{\theta} \wedge \min\left\{k \geq k_0 :  \max\left( \frac{| \widetilde{A_{k}}|}{|\log  \epsb_{k}|^{3/4}} , |\widetilde{B}_{k}|,|\widetilde{E}_{k}|\right) > K_{ \epsilon}  \right\}.
 \end{align*}
We call the region defined by the above inequalities on $( \widetilde{A}, \widetilde{B}, \widetilde{C})$ the \emph{good region} for the processes and evaluate separately the probability that we exit this region (i.e. that $L < \tilde{\theta}$) via one of the three processes $ \widetilde{A}, \widetilde{B}$ or $\widetilde{E}$. By definition  \eqref{eq:defthetatilde} of  $ \tilde{\theta}$ and since we will always take $n$ large enough to have
$$K_{\epsilon}  (1+\log_2^{3/4} (n))< \log(n),$$ 
 as long as $k_0 \leq k < L$, we can apply the estimates obtained in Propositions \ref{prop_drift_estimates} and \ref{prop_variance_estimates}.  Specifically, we will  decompose the processes $ \widetilde{A}, \widetilde{B}$ and $ \widetilde{E}$ into their predictable and martingale parts and use Doob's maximal inequality and $L^2$ estimates to control the martingales. 
 \\ 
\textsc{Let us start with $ \widetilde{B}$}. 
We write for $ k_0 \leq k \leq L $, 
$$ \widetilde{B}_{ k} =  \widetilde{B}_{ k_0} + \sum_{ \ell = k_0}^{ k -1} \E \left[ \Delta \widetilde{B}_\ell | \mathcal{F}_\ell \right] + M_{ k}^{B}$$ where $(M_{ k \wedge L }^{B})_{ k \geq k_0}$ is an $ (\mathcal{F}_k)$-martingale which starts from $0$ at time $k_{0}$. We first evaluate the drift/predictable part. To ease the calculation and readability, we will deliberately drop the integer-part notation $\lfloor \cdot \rfloor $ and introduce scales. Recall that the value of $\eta = \epsb_{k_0}$ has been fixed above, but we may decrease it for convenience as long as it is independent of $ n$ and $ \epsilon$. We start from $k_0 = (t_{ \mathrm{ext}}- \eta)n$ and  we let $$k_j =  (t_{\mathrm{ext}}- \eta 2^{-j})n,$$ for $ 0 \leq j \leq (\frac{2}{5} + \frac{1}{100}) \log_2(n)$. In particular we have $  j + |\log_{2} \eta| \leq |\log_{2} \epsb_{k}| \leq (j+1) + |\log_{2}\eta|$ for all $k_{j} \leq  k \leq k_{j+1}$. With this notation, and using our estimate \eqref{eqn_drift_estimate_B}, we know that if $k \geq k_0$ we have
\begin{eqnarray*} 
\mathbbm{1}_{k \leq L} \left| \sum_{ \ell = k_0}^{ k -1} \E \left[ \Delta \widetilde{B}_\ell | \mathcal{F}_\ell \right]\right|  
&\underset{\eqref{eqn_drift_estimate_B}}{\leq} & \mathrm{Cst} \sum_{ \ell = k_0}^{ \infty} \mathbbm{1}_{ \ell < L} \left( \frac{ \max \left( | \widetilde{A}_\ell |, |\widetilde{B}_\ell |, |\widetilde{C}_\ell |\right)}{ \sqrt{ \epsb_\ell} n}+ \frac{ 1 }{ \epsb_\ell n} n^{-1/30}\right) \\
&\underset{\text{good region}}{\leq}&  \mathrm{Cst} \cdot K_{ \epsilon} \cdot \sum_{ \ell =k_{0}}^{\infty}\mathbbm{1}_{ \ell < L} \left( \frac{|\log \epsb_{\ell}|^{3/4} +1}{ \sqrt{  \epsb_{\ell}} n} + \frac{1}{  \epsb_{\ell}n } n^{-1/30}\right)\\
 &\underset{\text{scales}}{\leq}&  \mathrm{Cst} \cdot K_{ \epsilon} \cdot \sum_{j=1}^{ \log_{2}(n)} \sum_{\ell = k_{j-1}}^{k_j - 1} \left( \frac{(j + |\log_{2} \eta|)^{3/4}}{ \sqrt{ \eta 2^{-j}}n} + \frac{1}{ \eta 2^{-j} n } n^{-1/30}\right) \\
  &{\leq}&  \mathrm{Cst} \cdot K_{ \epsilon} \cdot \sum_{j=1}^{ \log_{2}(n)}  \left( \sqrt{\eta} \frac{(j + |\log_{2} \eta|)^{3/4}}{ 2^{{j/2}}} +  n^{-1/30}\right) \\
    &{\leq}&  \mathrm{Cst} \cdot K_{ \epsilon}  \cdot  |\log \eta | \cdot \sum_{j=1}^{ \log_{2}(n)}  \left( \sqrt{\eta} \frac{j + 1}{ 2^{{j/2}}} +  n^{-1/30}\right) \\ & \leq&  K_{ \epsilon} \cdot \left( \mathrm{Cst} \cdot   \sqrt{\eta} |\log \eta |\right),
 \end{eqnarray*} 
for $n$ large enough where $\mathrm{Cst}>0$ is a constant that may vary from line to line but that does not depend on $n$, nor on $\epsilon$ nor on $ \eta$ as long as it is small. In particular, we may decrease the value of $\eta$ so that the parenthesis in the last display is smaller than $1/4$ say. We obtain that the sum of the absolute values of the expected conditional drifts of $ \widetilde{B}$ between $k_0$ and $L $ is bounded by $ K_ \epsilon /4$ (deterministicaly).

We deduce that the event $\{ L < \tilde{\theta}  \mbox{ and } | B_{k_0} | \leq K_ \epsilon /4 \mbox{ and } |\widetilde{B}_{ L } |> K_ \epsilon \}$ is included in the event $\{ L < \tilde{\theta} \mbox{ and } | M_{ L }^B |> K_ \epsilon/2\}$ so that in particular we can write 
\begin{align*} \mathbb{P}\left( L < \tilde{\theta} \mbox{ and we exit the region by } \widetilde{B} \right) 
&\underset{\text{\color{white} Doob}}{\leq}  \mathbb{P} \left( |\widetilde{B}_{k_0}| > K_ \epsilon /4\right) + \mathbb{P} \left( L   < \tilde{\theta} \mbox{ and } |M_{ L }^B |> K_ \epsilon / 2\right)   \\
&\underset{\text{\color{white} Doob}}{\leq} \mathbb{P} \left( |\widetilde{B}_{k_0}| > K_ \epsilon /4\right) + \mathbb{P} \left( \sup_{k_0 \leq k  < L} |M_{ k }^B| >  K_ \epsilon /2 \right) \\
&\underset{\text{Doob}}{\leq}  \mathbb{P} \left( |\widetilde{B}_{k_0}| > K_ \epsilon /4\right) + 4 \frac{ \mathbb{E}\left[ (M_{L}^B)^2\right]}{ K_ \epsilon^2 /4}.
\end{align*}
Up to increasing $ K_ \epsilon$ we can bound the first term by $ \epsilon$ using Lemma \ref{lem:enterregion}. To  bound the second term, we use our variance estimate \eqref{eqn_variance_estimate_B} which gives in the good region
$$  \mathbb{E}\left[ (\Delta M_k^B)^2 | \mathcal{F}_k,\,  { k \leq L}\right] =  \mathrm{Var} \left( \Delta M_{k}^B | \mathcal{F}_k,\,  { k \leq L}\right)= \mathrm{Var} ( \Delta \widetilde{B}_{k}  | \mathcal{F}_k,\,  { k \leq L}) \underset{ \eqref{eqn_variance_estimate_B}}{\leq} \frac{K}{n}. $$
By the orthogonality of martingale increments in $L^{2}$  we deduce that 
$$ \mathbb{E} [ (M_ L ^B)^2] = \sum_{k = k_0}^{ \infty} \E \left[  \mathbbm{1}_{ k \leq L} \mathbb{E}[(\Delta M_k^B)^2 | \mathcal{F}_k] \right] \leq \frac{ K (t_ \mathrm{ext} n - k_0)}{n} = K \eta.$$
Hence we obtain 
\begin{align*} 
\mathbb{P}(L < \tilde{\theta} \mbox{ and we exit the good region by } \widetilde{B}  ) &\leq  \epsilon + 16 \frac{ \mathbb{E}[ (M_{ L}^B)^2 ]}{K_ \epsilon^2} 
  \leq \epsilon + \frac{16 K \eta}{ K_ \epsilon^2 }.
 \end{align*}
 If $K_ \epsilon$ is large enough, the second term is also less than $ \epsilon$ so that the probability in the left-hand side is small. Conclusion: it is unlikely that we exit first the good region because of the process $ \widetilde{B}$. 
 
\noindent \textsc{Case of $\widetilde{E}$}. The proof is similar, but we shall use more precisely the form of the conditional expected drifts. As before, we write 
$$ \widetilde{E}_{ k} =  \widetilde{E}_{ k_0} + \sum_{ \ell = k_0}^{ k -1} \E \left[ \Delta \widetilde{E}_\ell | \mathcal{F}_\ell \right] + M_{ k}^{E}$$ where $(M_{ k \wedge L}^{E})_{ k \geq k_0}$ is an $ (\mathcal{F}_ k)$-martingale which starts from $0$ at time $k_{0}$. We will bound $ \mathbb{P}( L < \tilde{\theta} \mbox{ and } \widetilde{E}_L > K_ \epsilon )$ and the case $ \widetilde{E}_L <- K_ \epsilon$ will be treated similarly. Let us introduce $ L^{-}_{E}$, the last time before $ L$ where $ \widetilde{E}$ is smaller that $ K_ \epsilon /2$. In particular on the event $\{L < \tilde{\theta} \mbox{ and } \widetilde{E}_L > K_ \epsilon\}$, for $L^{-}_{E} < k \leq L$ the process $ \widetilde{E}$ is larger than $ K_ \epsilon /2$ and its conditional expected drift therefore satisfies   \begin{eqnarray*}\left| \E \left[ \Delta \widetilde{E}_k | \mathcal{F}_k \right] -\frac{1}{\epsb_k n}  \underbrace{\widetilde{E}_{k}}_{\geq K_{\epsilon}/2}\right| &\underset{ \eqref{eqn_drift_estimate_B} \&\eqref{eqn_drift_estimate_C}}{\leq}&  \frac{3 \sqrt{3}}{2} \left(\frac{K}{\epsb_k n} \sqrt{\epsb_k} \max \left( |\widetilde{A}_k|, |\widetilde{B}_k|, |\widetilde{C}_k| \right) + \frac{K}{\epsb_k n} n^{-1/30}\right) \\ 
   & &+ \frac{\delta}{\epsb_k n} \max \left( |\widetilde{B}_k|, |\widetilde{C}_k| \right) + \frac{K}{\epsb_k n} \epsb_k^{3/4} |\widetilde{A}_k| + \frac{K}{\epsb_k n} n^{-1/30} \\
  & \leq &   \frac{(\delta + 3K \sqrt{ \epsb_{k}})}{\epsb_k n} \max \left( |\widetilde{B}_k |, |\widetilde{C}_k | \right) + \frac{4K}{\epsb_k n} \epsb_k^{1/2} |\widetilde{A}_k| + \frac{4K}{\epsb_k n} n^{-1/30}\\
     &\underset{\substack{\text{good region} \\ \text{$n$ large enough}}}{\leq}& K_{ \epsilon} \cdot  \left( \frac{2(\delta + 3K\sqrt{ \epsb_{k}})}{  \epsb_{k} n} + \frac{ 4K \epsb_{k}^{1/2}|\log \epsb_{k}|^{3/4}}{ \epsb_{k} n}\right).  \end{eqnarray*}  Up to further diminishing $\eta$ (which forces $  \epsb_{k} < \eta$ to be small), we can assume that the right-hand side is smaller than $ K_{ \epsilon}/(4 \epsb_{k} n)$ for $n$ large enough so that we are sure that the conditional expected drift $\E \left[ \Delta \widetilde{E}_k | \mathcal{F}_k \right]$ is less than $- K_{ \epsilon}/(4 \epsb_{k} n)$ for $L^{-}_{E} < k <L$ and in particular it is negative and pulls back the process towards $0$.

\begin{figure}[!h]
 \begin{center}
 \includegraphics[width=14cm]{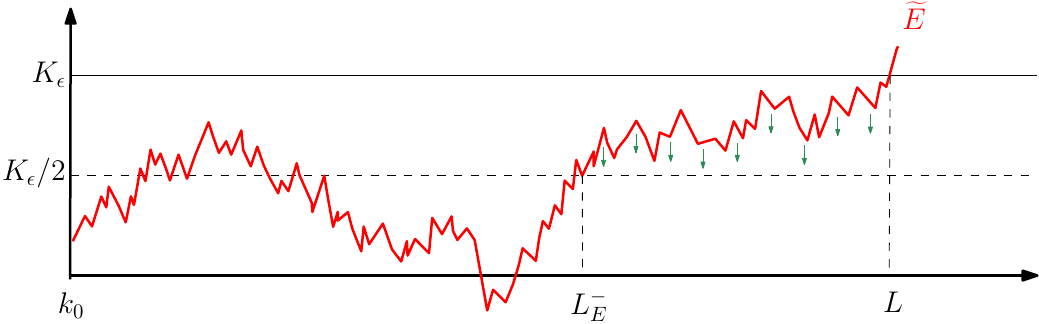}
\caption{Illustration of the proof. If we exit the good region through the process $ \widetilde{E}$, then it has a negative drift (green arrows on the figure) over the time interval $(L^{-}_{E},L)$ and this forces its martingale part to vary too much.}
 \end{center}
 \end{figure}

We deduce that on the event $\{k_0 < L^{-}_{E} < L < \tilde{\theta} \mbox{ and } \widetilde{E}_L > K_ \epsilon\}$ the variation of the martingale $M^E$ over $[L^{-}_{E},L]$ must be larger than $ K_ \epsilon/2$ (just because the drift plays against the process in this region).  Hence, 
\begin{align*} \mathbb{P}( L < \tilde{\theta} \mbox{ and } \widetilde{E}_L > K_ \epsilon ) 
& \leq  \mathbb{P}(L^{-}_{E} \leq k_{0}) + \mathbb{P} \left( \sup_{k_0 \leq k \leq L} |M_{k \wedge L}^E| >  \frac{K_\epsilon}{4}\right)  \end{align*}
We now use our variance estimates \eqref{eqn_variance_estimate_C} and  \eqref{eqn_variance_estimate_B} in the good region. In particular, 
\begin{align*} \mathbb{E}\left[(\Delta M_k^E)^2 \mathbbm{1}_{ k \leq L}\right] &=  \mathrm{Var} \left( \Delta M_{k}^E \mathbbm{1}_{ k \leq L}\right)= \mathrm{Var} \left( \left(\Delta \widetilde{C}_{k} - \frac{3 \sqrt{3}}{2} \Delta \widetilde{B}_{k}\right)  \mathbbm{1}_{ k \leq L}\right) \\
& \leq  \mathrm{Cst} \cdot \left( \mathrm{Var} (\Delta \widetilde{C}_{k} \mathbbm{1}_{ k \leq L})  + \mathrm{Var}(\Delta \widetilde{B}_{k} \mathbbm{1}_{ k \leq L}) \right)   \underset{ \eqref{eqn_variance_estimate_B} \& \eqref{eqn_variance_estimate_C}}{\leq}\frac{ \mathrm{Cst}}{ \sqrt{ \epsb_k} n },
\end{align*}
where $ \mathrm{Cst}>0$ as usual does not depend on $n$ nor on $ \epsilon$ nor on $\eta$.  Summing those variances over one scale  we obtain $$ \sum_{k = k_i}^{k_{i+1} - 1}\frac{ \mathrm{Cst}}{ \sqrt{ \epsb_k} n } \leq \frac{  \mathrm{Cst} (k_{i+1}- k_i ) }{ \sqrt{ \epsb_{k_{i+1}}} n} \leq  \mathrm{Cst}\frac{ \eta 2^{-i} n }{ \sqrt{ \eta 2^{-i}}n}  = \mathrm{Cst}\sqrt{ \eta} (\sqrt{2})^{-i}.$$
We deduce that 
$$
 \mathbb{P} \left( \sup_{ k_0 \leq k \leq  L} |M_{k}^E |> \frac{ K_ \epsilon}{4} \right) 
    \underset{ \mathrm{Doob}}{\leq} \frac{ 16 \mathbb{E} [ (M_{ L}^E)^2]}{K_ \epsilon^2} 
  \leq \frac{ \mathrm{Cst}}{K_ \epsilon^2} \sum_{i = 0}^{ \infty} \sum_{k = k_i}^{k_{i+1} - 1}\mathbb{E}[(\Delta M_k^E)^2 \mathbbm{1}_{k \leq L}] 
  \leq \frac{ \mathrm{Cst} \sqrt{ \eta}}{K_ \epsilon^2}.$$
  If $K_ \epsilon$ is large enough, this bound, as well as  $\mathbb{P}(L^{-}_{E} \leq k_{0})$ (by Lemma~\ref{lem:enterregion}), are less than $ \epsilon$ so the probability of the event $ \{ k_{0}< L < \tilde{\theta} \mbox{ and } \widetilde{E}_L >~K_ \epsilon \} $ is less than $2 \epsilon$. Combined with the symmetric case when $ \widetilde{E}_{L}<-K_{ \epsilon}$, this finishes the case of $\widetilde{E}$.\\
\textsc{Let's finally move on to the control of  $ \widetilde{A}$} .
Again, we decompose $ \widetilde{A}$ as follows
$$ \widetilde{A}_{ k} =  \widetilde{A}_{ k_0} + \sum_{ \ell = k_0}^{ k -1} \E \left[ \Delta \widetilde{A}_\ell | \mathcal{F}_\ell \right] + M_{ k}^{A},$$ where $(M_{ k\wedge L}^{A})_{ k \geq k_0}$ is a martingale for the canonical filtration and starts at $0$ at time $k_{0}$. Compared to the above cases, we shall look more precisely at the scale of $L$ and introduce 
$$ J \mbox{ such that }  k_{J} \leq L < k_{J+1} . $$
In particular, recall that if $k_{j} \leq k \leq k_{j+1}$ we have $j + |\log_{2} \eta| \leq |\log_{2} \epsb_{k}| \leq (j+1) + |\log_{2}\eta|$ so that up to losing a multiplicative factor, we may replace $ |\log  \epsb_{k}|$ by the corresponding scale $j$ in the calculations. As before, let us bound from above the probability that we exit the good region with the process $\widetilde{A}$, that is $$ \P ( L < \tilde{\theta} \mbox{ and }  \widetilde{A}_{ L} > K_ \epsilon \cdot (J+1)^{3/4})$$ and the case $L < \tilde{\theta} \mbox{ and }  \widetilde{A}_{L} <- K_ \epsilon \cdot (J+1)^{3/4}$ is symmetric. As for the case of $ \widetilde{E}$,  we introduce $L_A^-$ the last time before $L$ where $ \widetilde{A}$ is smaller than $ K_ \epsilon (J+1)^{3/4}/2$ and $I$ its corresponding scale (i.e. such that $k_I \leq L_A^- < k_{I+1}$), see Figure \ref{fig:driftA}.  As before, we get from Lemma \ref{lem:enterregion} that $L_A^- >k_{0}$ with high probability when $K_{\epsilon}$ is large. 
We will now use the fact that the conditional expected drift of $\widetilde{A}$ not only goes against $ \widetilde{A}$ but also that its strength is linear in $ \widetilde{A}$. 

\begin{figure}[!h]
 \begin{center}
 \includegraphics[width=15cm]{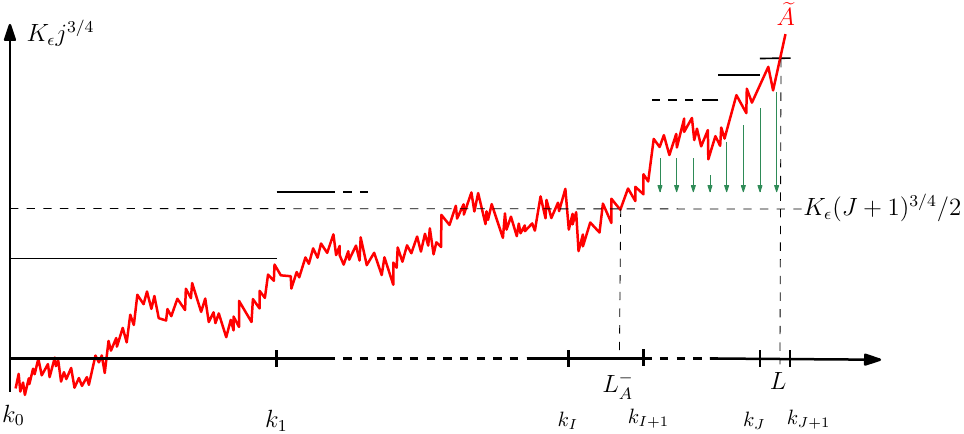}
\caption{Illustration of the proof. If we exit the good region through the process $ \widetilde{A}$, then it has a negative drift (green arrows on the figure) over the time interval $(L_A^-,L)$ whose strength is proportional to $ J^{3/4} K_{ \epsilon}$ over a scale. As in the above cases, this forces its martingale part to vary too much. \label{fig:driftA}}
 \end{center}
 \end{figure}
 
Specifically, when $L_A^- < k < L$, the process $\widetilde{A}$ is larger than $ K_ \epsilon (J+1)^{3/4}/2$ while the other processes are in absolute value less than $ K_{ \epsilon}$ so that by \eqref{eqn_drift_estimate_A} the predictable drift is negative and of order $- \widetilde{A}_k/( \epsb_k n)$: 

 \begin{eqnarray} \left| \E \left[ \Delta \widetilde{A}_k | \mathcal{F}_k \right] -\frac{1}{4 \epsb_k n}  \underbrace{\widetilde{A}_{k}}_{\geq K_ \epsilon (J+1)^{3/4}/2}\right| &\underset{ \eqref{eqn_drift_estimate_A} }{\leq}& \frac{\delta}{\epsb_k n} |\widetilde{A}_k| + \frac{K \epsb_k^{1/4}}{\epsb_k n} \max \left( |\widetilde{B}_k|, |\widetilde{C}_k| \right) + \frac{K}{\epsb_k n} n^{-1/30}\nonumber \\
  &\underset{\text{good region}}{\leq}& \frac{1/10}{\epsb_k n} K_ \epsilon (J+1)^{3/4} + \frac{K \epsb_k^{1/4}}{\epsb_k n} K_{\epsilon} + \frac{K}{\epsb_k n} n^{-1/30} \nonumber \\ &\leq& \frac{1}{9 \epsb_k n} K_ \epsilon (J+1)^{3/4},  \label{eq:calculdriftA}\end{eqnarray}
  for $n$ large enough up to diminishing $\eta$ if necessary. In particular,  $ \E \left[ \Delta \widetilde{A}_k | \mathcal{F}_k \right]$ is less than $- \frac{ K_{ \epsilon}}{100} \frac{(J+1)^{3/4}}{ \epsb_{k}n}$ and summing the conditional expected drift  over all $k \in (L_A^-,L)$ yields total drift smaller than   
\begin{align*} 
 \sum_{k =L_A^- +1}^{ L-1} \mathbb{E}[ \Delta \widetilde{A}_k | \mathcal{F}_k] &\leq  -c \cdot (J-I-1) K_{ \epsilon} (J+1)^{{3/4}},
  \end{align*}
 for some constant $c>0$.  Let us first concentrate on the case where $I+1 < J$ so that $J-I-1>0$. In particular, the variation of the martingale $M^A$ between $ L_A^-+1$ and $L$ must compensate this drift and must be larger than $-c(J-I-1) {K_ \epsilon}(J+1)^{3/4}$. Thus,  we have 
\begin{align*} \mathbb{P}&\left( k_{0}< L_A^- < L < \tilde{\theta} \mbox{ and } I+1 <J  \mbox{ and } \widetilde{A}_{ L}/(J+1)^{3/4} > K_ \epsilon \right) \\
& \leq \sum_{j=2}^{\log_2(n)}\sum_{i=0}^{j-2}  \mathbb{P}\left(\sup_{k_i \leq \ell < k_{j+1} \wedge L} M_\ell^A - \inf_{k_i \leq \ell < k_{j+1} \wedge L} M_\ell^A> c (j-i-1) K_ \epsilon (j+1)^{3/4}\right) \\
& \underset{Doob}{\leq}  \mathrm{Cst}\sum_{j=2}^{\log_2(n)} \sum_{i=0}^{j-2}\frac{ \mathbb{E}[(M_{k_{j+1} \wedge L}^A - M_{k_{i}}^A)^2] }{K_ \epsilon^2 (j+1)^{3/2} (j-i-1)^2}.
\end{align*}
  Thanks to our variance estimates \eqref{eqn_variance_estimate_A_bis} we have $ \mathbb{E}[(\Delta M_k^A)^2 \mathbf{1}_{k <L}] \leq  \frac{ \mathrm{Cst}}{ \epsb_k n}$ so that after summing over scales we obtain  $\mathbb{E}[(M_{k_{j+1} \wedge L}^A - M_{k_{i}}^A)^2]  \leq   \mathrm{Cst} \cdot (j+1-i)$. Plugging this back into the above estimate we deduce 
 \begin{align*} \mathbb{P}&\left( k_{0}< L_A^- < L < \tilde{\theta} \mbox{ and } I+1 <J  \mbox{ and } \widetilde{A}_{ L}/(J+1)^{3/4} > K_ \epsilon \right) \\
 & \leq  \frac{\mathrm{Cst}}{K_\epsilon^2}\sum_{j=2}^{\log_2(n)} \sum_{i=0}^{j-2}\frac{ (j+1-i)}{ (j+1)^{3/2} (j-i-1)^2} \leq \frac{\mathrm{Cst}}{K_ \epsilon^2},
 \end{align*}
so that this probability can be made arbitrarily small by making $ K_{\epsilon}$ large. The case  ${\widetilde{A}_{ L}/(J+1)^{3/4} < -K_ \epsilon}$ is treated similarly. As for the case $|I-J| \leq 1$, since $k_{0}<L_A^-$ w.h.p. (by Lemma~\ref{lem:enterregion}), we use that in this case the martingale $M^A$ must have a variation of at least $ K_{ \epsilon} (j+1)^{3/4}/2$ over $(k_{j-1}, k_{j+1})$ (we do not use the strength of the drift, but just the fact it plays against us over $(L_A^-,L)$ as for the case of $ \widetilde{E}$). By Doob maximal inequality and the above estimate, this probability is bounded from above by $ \mathrm{Cst}/((j+1)^{3/2} K_{\epsilon}^{2})$, whose sum over $0 \leq j  \leq \log_2(n)$ is $\leq \frac{ \mathrm{Cst}}{K_\epsilon^{2}}$. We conclude similarly.
 \endproof

  \subsection{This is the end}
  
  Using Proposition \ref{prop:goodregion} and \eqref{eqn_fluidlimit_near_end}, we can conclude as in Lemma \ref{lem_XYZ_not_too_small} that the process $X_{k}$ stays positive at least as long as 
 $$ n \epsb_{k}^{2} \gg \sqrt{n}  (\epsb_{k})^{3/4} |\log \epsb_{k}| ^{{3/4}}, \quad \mbox{ i.e. as long as } \quad t_{ \mathrm{ext}}n - k \gg n^{3/5} (\log n)^{3/5}.$$
 Through a more refined control on $ \widetilde{A}$, we shall first prove that we can  remove the $\log^{3/5} n$ and prove that $nt_{ \mathrm{ext}} - \theta = O_{ \mathbb{P}}(n^{{3/5}})$, see Proposition \ref{prop:controlA}. The convergence in law of $n^{{-3/5}}(nt_{ \mathrm{ext}} - \theta)$  will be deduced by doing a SDE approximation for the process $\widetilde{A}_{k}$ when $ \epsb_{k} \approx n^{-2/5}$ in Proposition \ref{prop:end}. \\
 
  Since we now take a close look at times $ k = t_{ \mathrm{ext}}n - O (n^{3/5})$, let us introduce a new piece of notation: for $ k \geq 0$, we write 
 $$ \xb_{k} :=  n^{-3/5}(t_{ \mathrm{ext}}n - k), \quad \mbox{so that} \quad k = t_{ \mathrm{ext}}n - \xb_{k} n^{3/5} \quad \mbox{i.e.} \quad  \epsb_{ k} =  \xb_{k} n^{-2/5}.$$
 With this notation at hands, we can state a refined control on $ \widetilde{A}$.

\begin{proposition}[Control on $ \widetilde{A}$ in the critical region]\label{prop:controlA} For all $ \epsilon >0$ there exists $K_{ \epsilon}$ such that with probability at least $ 1 - \epsilon$,  for all $ k \leq t_{ \mathrm{ext}}n$ such that $\xb_{k} \geq K_{ \epsilon}$, we have 
$$ |\widetilde{A}_{k}| < K_{ \epsilon} \xb_{k}^{1/8}.$$
\end{proposition}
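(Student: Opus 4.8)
The plan is to upgrade the rough bound of Proposition~\ref{prop:goodregion} into the sharper, \emph{scale‑dependent} bound $|\widetilde A_k|<K_\epsilon\xb_k^{1/8}$ by re‑running the multi‑scale stopping argument of that proposition, but now retaining the \emph{linear} strength of the restoring drift of $\widetilde A$ and replacing the $L^2$/Doob step by an exponential (Freedman‑type) martingale estimate. Work on the event $\mathcal G$ of probability $\ge 1-\epsilon$ on which the conclusions of Lemma~\ref{lem:enterregion} and Proposition~\ref{prop:goodregion} hold; there $|\widetilde B_k|,|\widetilde C_k|\le K_\epsilon$ and $|\widetilde A_k|\le K_\epsilon|\log\epsb_k|^{3/4}<1000\log n$ for $k_0\le k<\tilde{\theta}$. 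Reuse the dyadic scales $k_j=(t_\ext-\eta 2^{-j})n$: on scale $j$ one has $\epsb_k\asymp\eta 2^{-j}$ and $\xb_k\asymp\eta 2^{-j}n^{2/5}$, and $\{\xb_k\ge K_\epsilon\}$ corresponds to $\{j\le j^\ast\}$ with $j^\ast:=\log_2(\eta n^{2/5}/K_\epsilon)\asymp\tfrac25\log_2 n$. Let $L$ be the first $k\ge k_0$ at which either $|\widetilde A_k|\ge K_\epsilon\xb_k^{1/8}$, or $X_k=0$, or $\xb_k<K_\epsilon$. Before time $L$ one has $|\widetilde A_k|<K_\epsilon\xb_k^{1/8}$ and $\xb_k\ge K_\epsilon$, so by the computation behind Lemma~\ref{lem_XYZ_not_too_small} together with~\eqref{eqn_fluidlimit_near_end},
$X_k\ge(3-o(1))\xb_k^{2}n^{1/5}-K_\epsilon\xb_k^{7/8}n^{1/5}>0$ once $K_\epsilon$ is large; hence $L$ is never the hitting time of $\{X=0\}$ before it is the hitting time of $\{\xb<K_\epsilon\}$, all the estimates of Propositions~\ref{prop_drift_estimates}--\ref{prop_variance_estimates} apply on $[k_0,L)$, and the proposition amounts to showing that, with probability $\ge 1-O(\epsilon)$, we never leave through $\widetilde A$. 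Write $\widetilde A_k=\widetilde A_{k_0}+\sum_{\ell=k_0}^{k-1}\E[\Delta\widetilde A_\ell\mid\mathcal F_\ell]+M^A_k$ with $(M^A_{k\wedge L})$ a martingale.

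Fix a scale $j\le j^\ast$ and bound the probability of leaving through $\widetilde A$ at a time $L$ in scale $j$; by symmetry take $\widetilde A_L\ge K_\epsilon\xb_L^{1/8}$. Let $L^-$ be the last time $\le L$ with $\widetilde A_{L^-}\le\tfrac12 K_\epsilon\xb_L^{1/8}$; since $\widetilde A_{k_0}\le K_\epsilon$ while $\tfrac12 K_\epsilon\xb_{k_0}^{1/8}=\tfrac12 K_\epsilon(\eta n^{2/5})^{1/8}$ is enormous, such an $L^-$ exists and $L^->k_0$. For $L^-<k<L$ we have $\widetilde A_k>\tfrac12 K_\epsilon\xb_L^{1/8}\ge\tfrac12 K_\epsilon^{9/8}$; plugging this into~\eqref{eqn_drift_estimate_A} with $\delta=\tfrac1{100}$, and taking $K_\epsilon$ large enough that $\tfrac{K\epsb_k^{1/4}}{\epsb_k n}\max(|\widetilde B_k|,|\widetilde C_k|)\le\tfrac{K\eta^{1/4}K_\epsilon}{\epsb_k n}$ and the $n^{-1/30}$ term are dominated by a fraction of $\tfrac{\widetilde A_k}{4\epsb_k n}$, the conditional drift of $\widetilde A$ over $(L^-,L)$ is $\le -\,\mathrm{Cst}\cdot\widetilde A_k/(\epsb_k n)\le -\,\mathrm{Cst}\cdot K_\epsilon\xb_{k_j}^{1/8}/(\epsb_k n)$ with the restoring sign. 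Using $\sum_{k\in\text{one scale}}1/(\epsb_k n)\asymp 1$, if $L^-$ lies in scale $i\le j$ the total drift over $(L^-,L)$ is $\le -\,\mathrm{Cst}\cdot K_\epsilon\xb_{k_j}^{1/8}(j-i+1)$, while $\widetilde A_L-\widetilde A_{L^-}\ge\tfrac12 K_\epsilon\xb_{k_j}^{1/8}>0$; therefore the martingale part must satisfy $M^A_L-M^A_{L^-}\ge\mathrm{Cst}\cdot K_\epsilon\xb_{k_j}^{1/8}(j-i+1)$.

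It remains to see that such a martingale displacement is extremely unlikely. The increments of $M^A$ are polynomially small near extinction: $|\Delta M^A_k|\lesssim 1/(\epsb_k^{3/4}\sqrt n)\le 1/(K_\epsilon^{3/4}n^{1/5})$ on scales $\le j^\ast$, while by~\eqref{eqn_variance_estimate_A_bis} the conditional quadratic variation of $M^A$ over $[k_i,k_{j+1}]$ is $\lesssim j-i+1$. Hence Freedman's inequality applied to $\pm M^A$, summed over the scale $i$ of $L^-$, yields
\[
\mathbb P\big(\text{leave through }\widetilde A\text{ at scale }j\big)\;\lesssim\;\sum_{i=0}^{j}\exp\!\Big(-\,\mathrm{Cst}\,\frac{K_\epsilon^2\,\xb_{k_j}^{1/4}\,(j-i+1)^2}{\,j-i+1\,}\Big)\;\lesssim\;\exp\!\big(-\mathrm{Cst}\,K_\epsilon^2\,\xb_{k_j}^{1/4}\big),
\]
the geometric sum over $i$ being dominated by $i=j$. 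Since $\xb_{k_j}$ decreases with $j$ down to $\xb_{k_{j^\ast}}=K_\epsilon$, summing over $j\le j^\ast$ is again dominated by the terminal term, so $\mathbb P(\text{leave through }\widetilde A)\lesssim\exp(-\mathrm{Cst}\,K_\epsilon^{9/4})$, which tends to $0$ as $K_\epsilon\to\infty$ uniformly in $n$. Choosing $K_\epsilon$ large enough and adding $\mathbb P(\mathcal G^c)\le\epsilon$ finishes the proof; as noted above, the inequality $X_k>0$ on the relevant range also gives $n t_\ext-\theta\le K_\epsilon n^{3/5}$.

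The main obstacle is exactly that one cannot simply copy the $L^2$/Doob argument used for Proposition~\ref{prop:goodregion}: there the exit threshold $K_\epsilon(J+1)^{3/4}$ \emph{grows} with the exit scale, whereas here the threshold $K_\epsilon\xb_k^{1/8}$ \emph{decreases} toward extinction while the accumulated martingale variance grows, and a Doob bound would only produce a failure probability of order $\mathrm{Cst}\cdot\tfrac{\log\log n}{K_\epsilon^{9/4}}$, which diverges with $n$. One genuinely needs exponential concentration, which is affordable only because $|\Delta M^A_k|$ is polynomially small in $n$ near $t_\ext n$, and it must be combined with the fact that the self‑correcting drift is \emph{linear} in $\widetilde A$, so that persisting above the threshold for $m$ consecutive scales costs a martingale displacement of order $m$ rather than $\sqrt m$; this is what makes the Freedman exponent grow like $K_\epsilon^2\,\xb_{k_j}^{1/4}(j-i+1)$ and renders the double sum over scales summable and bounded uniformly in $n$. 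Carrying the $\widetilde B,\widetilde C$ and $n^{-1/30}$ error terms through this multi‑scale bookkeeping, and folding $\{X_k>0\}$ into the stopping rule so that the estimates stay legitimate up to $L$, is where the care is needed.
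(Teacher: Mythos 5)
Your proof is correct in substance, but it takes a genuinely different route from the paper at the key concentration step, so let me compare. You keep a \emph{constant} lower barrier (half the exit value $\tfrac12 K_\epsilon\xb_L^{1/8}$) for the last-entrance time $L^-$, which makes the accumulated restoring drift only \emph{linear} in the number of scales crossed, and you then compensate by replacing Doob's $L^2$ inequality with an exponential (Freedman) bound, whose exponent $\asymp K_\epsilon^2\xb_{k_j}^{1/4}(j-i+1)$ makes the double sum over scales converge uniformly in $n$; this works (the increment bound $|\Delta M^A_k|\lesssim K_\epsilon^{-3/4}n^{-1/5}$ does make the sub-Gaussian regime of Freedman available, and your reduction of the event $\{X_k=0\}$ to the $\widetilde A$-barrier exit via Lemma~\ref{lem_XYZ_not_too_small} and \eqref{eqn_fluidlimit_near_end} is the same as the paper's). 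The paper instead stays entirely within the $L^2$/Doob framework of Proposition~\ref{prop:goodregion} but makes the lower barrier \emph{grow linearly with the scale gap}, namely $T^-$ is defined with threshold $\alpha(I-J+1)K_\epsilon 2^{J/8}$; the linear-in-$\widetilde A$ drift then accumulates \emph{quadratically}, of order $(I-J)^2K_\epsilon 2^{J/8}$, so Doob gives a summand $\asymp (i-j)/\bigl((i-j)^4K_\epsilon^2 2^{j/4}\bigr)$ whose double sum is $\lesssim K_\epsilon^{-9/4}$ with no logarithmic loss. Consequently your closing claim that ``one genuinely needs exponential concentration'' is not accurate: it is true for a naive copy with a constant barrier (where Doob indeed leaves a $\log\log n$ factor, as you compute), but the paper's barrier trick rescues the $L^2$ argument; your Freedman route buys a much smaller (stretched-exponential in $K_\epsilon$) failure probability at the price of invoking a stronger martingale inequality. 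One small technical point to tighten: to legitimately use the drift/variance estimates inside the martingale inequality you should stop the process not only at your $L$ but also at the exit time of the good region of Proposition~\ref{prop:goodregion} (as the paper does by setting $T=L\wedge\min\{\cdots\}$), rather than arguing ``on the event $\mathcal{G}$'', since Freedman requires the predictable bounds to hold pathwise up to the stopping time; also the existence of $L^-$ above $k_0$ follows from $\widetilde A_{k_0}\leq K_\epsilon\leq\tfrac12K_\epsilon\xb_L^{1/8}$ (using $\xb_L\geq K_\epsilon$), not from the size of $\xb_{k_0}$. These are easily repaired and do not affect the validity of your argument.
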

In particular, performing the same argument as in the beginning of this subsection, we deduce that $X_{k}$ stays positive until time $t_ \mathrm{ext} n - K_ \epsilon n^{3/5}$, that is $\theta > t_ \mathrm{ext} n - K_ \epsilon n^{3/5}$ with probability at least $1 - \epsilon$.

\proof The proof is similar to the control of $ \widetilde{A}$ in Proposition \ref{prop:goodregion}. With the notation of the proof of Proposition \ref{prop:goodregion}, let us introduce
$$ T =L \wedge \min\{k \geq k_0  :  | \widetilde{A}_ k | > K_ \epsilon  \cdot \xb_{k}^{ \frac{1}{8}}  \}$$ 
and $J \in \{0,1,2,...\}$ the corresponding scale, i.e.\ such that $ 2^{J}   \geq \xb_{T}  >  2^{(J-1)} $. As for the previous control of $ \widetilde{A}$, we will replace $\xb_{T}$ by $ 2^J$ in the calculation to make the reading easier.  Note in particular that $k \mapsto \xb_{k}$ is decreasing. 

We bound the probability $ \P ( T= k < L \mbox{ and }    \widetilde{A}_ k  > K_ \epsilon  \cdot 2^{ \frac{J}{8}} )$, the case $ \{T= k < L \mbox{ and }    \widetilde{A}_ k  <- K_ \epsilon  2^{ \frac{J}{8}}\}$  being similar. For this, let $\alpha>0$ be a small constant (to be precised later), and let
$$ T^- = \sup\left\{ k_{0} \leq k \leq T : \widetilde{A}_{k} \leq  \alpha (I-J +1) K_ \epsilon 2^{J/8} \mbox{ with } 2^{I-1}<  \xb_k \leq 2^{I}\right\}$$ and $I \geq J$ its corresponding scale (notice the slight difference here with the proof of Proposition \ref{prop:goodregion} because $I$ enters in the definition of the barrier). As before, Lemma \ref{lem:enterregion} will entail that $T^{-}>k_0$ with high probability as $n \to \infty$ and when $ k_0 \leq T^- \leq k \leq T$ and $2^{i-1} < \xb_{k} \leq 2^{i}$,  we have $  {\widetilde{A}_{k} \geq   \alpha(i-J +1) K_ \epsilon 2^{J/8}}$. By the same calculation as in \eqref{eq:calculdriftA} we have 
$$  \E \left[ \Delta \widetilde{A}_k | \mathcal{F}_k \right] \leq - \frac{\alpha}{8} \frac{(i-J +1) K_ \epsilon 2^{J/8}}{\epsb_k n}.$$
Summing those expected conditional drifts  over all $T^{-} +1 \leq k < T$ yields a total drift smaller than   
\begin{eqnarray*} 
 \sum_{k = T^{-}+1}^{T - 1}\mathbb{E}[ \Delta \widetilde{A}_k | \mathcal{F}_k] &\underset{\text{scales}}{\leq}&  \sum_{i = J+1}^{I-1} \sum_{k \geq 0} \mathbbm{1}_{2^i \geq \xb_k > 2^{i-1}}\mathbb{E}[ \Delta \widetilde{A}_k | \mathcal{F}_k] \\
 &\leq& -\frac{\alpha}{8} \sum_{i = J+1}^{I-1} (i-J+1) K_ \epsilon 2^{J/8} \sum_{k \geq 0} \mathbbm{1}_{2^i \geq \xb_k > 2^{i-1}} \frac{1}{  \epsb_{k}n} \\
  &\leq& -\frac{\alpha}{16} \sum_{i = J+1}^{I-1} (i-J+1) K_ \epsilon 2^{J/8} \\
    &\leq& -\frac{\alpha}{16} ( I-J-1)^2 K_ \epsilon 2^{J/8}. 
 \end{eqnarray*}
Let us first focus on the case $I-J \geq 2$: as soon as $T^{-}>k_{0}$  the variation of the martingale $M^A$ between $T^{-}$ and $T$ must compensate this drift plus the difference of the starting and ending values, and so must be larger than 
 \[ K_{\epsilon} 2^{J/8} \left( \frac{\alpha}{16}(I-J-1)^2 -\alpha(I-J+1)+2^{-1/8} \right).\]
If $\alpha$ has been chosen small enough (e.g. $\alpha=\frac{1}{100}$), as soon as $I-J \geq 2$, this is larger $\frac{1}{32} K_{\epsilon} 2^{J/8} (I-J)^2$.  As in the proof of Proposition \ref{prop:goodregion}, the sum of the variances of the increments of $M^{A}$ between scales $i$ and $j$ is bounded above by $ \mathrm{Cst}(i-j)$ and so the probability that $M^{A}$ varies by more than $\frac{1}{32}( i-j)^2 K_ \epsilon 2^{j/8}$ over this time interval is bounded above using Doob's inequality by 
 $$  \mathrm{Cst}\frac{i-j}{\left(( i-j)^2 K_ \epsilon 2^{j/8}\right)^{2}}.$$
Summing these probabilities over all scales $j_{0} \leq j \leq i$, we deduce that 
\begin{align*} \mathbb{P}&\left( k_{0}< T^- < T < L \mbox{ and } I -1 >J\geq j_{0}  \mbox{ and } \widetilde{A}_{T} > K_ \epsilon 2^{J/8} \right) \\
& \leq  \frac{\mathrm{Cst}}{K_\epsilon^2}\sum_{i \geq j+2 \geq j_{0}+2}\frac{ i-j}{ (i-j)^4 2^{j/4}} \leq \frac{\mathrm{Cst} \cdot 2^{-j_{0}/4}}{K_ \epsilon^2},
 \end{align*}
and this can be made arbitrarily small by taking $j_{0}$ large enough. Finally, we treat the case $0 \leq I-J \leq 1$ similarly, by noting that in this case, if $k_{0}<T^{-}$ (which has high probability by Lemma \ref{lem:enterregion}), the variation of $\widetilde{A}$ between times $T^-$ and $T$ is at least $(2^{-1/8}-2\alpha) K_{\epsilon} 2^{J/8}$. Since the drift is negative, the martingale $M^A$ must have a variation of order $ K_ \epsilon 2^{J/8}$ (provided $\alpha<\frac{1}{4}$) over the scale $J$, and the conclusion is the same.
\endproof

In the rest of this subsection we stress back the dependence in $n$ and use $\theta^{n} \equiv \theta$ for the stopping time of the exploration and study the convergence of $$ \xb_{ \theta^{n}}  \in \mathbb{R} \quad \mbox{ such that } \quad  \theta^{n} = t_{ \mathrm{ext}}n - \xb_{ \theta^{n}} n^{3/5}.$$
\begin{proposition}\label{prop:end} We have the following convergence in distribution as $n$ goes to infinity
$$ \xb_{ \theta^{n}}  \xrightarrow[n\to\infty]{(d)}  3^{-3/5} \cdot 2^{4/5} \cdot \vartheta^{-2},$$
where $\vartheta = \inf\{ t \geq 0 : W_{t} = -t^{-2}\}$ with $W$ a standard linear Brownian motion started from $0$ at $0$.
\end{proposition}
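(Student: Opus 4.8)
The plan is to run a diffusion approximation for the normalised fluctuation $\widetilde{A}$ inside the critical window $\xb_k \in [\delta, K]$ (equivalently $\epsb_k \in [\delta n^{-2/5}, K n^{-2/5}]$), and then to let $n \to \infty$, $\delta \to 0$ and $K \to \infty$ in that order. Throughout, one works on the high-probability event furnished by Propositions~\ref{prop:goodregion} and~\ref{prop:controlA}, on which $|\widetilde{A}_k| \leq K_\epsilon \xb_k^{1/8}$, $|\widetilde{B}_k|,|\widetilde{C}_k| \leq K_\epsilon$ throughout the window and $\theta^n \geq t_{\ext}n - Kn^{3/5}$, so that the stopping time is reached inside or to the left of the window. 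On this event, \eqref{eqn_drift_estimate_A} reads $\E[\Delta\widetilde{A}_k \mid \mathcal{F}_k] = -\tfrac{1}{4\epsb_k n}\widetilde{A}_k + r_k$ with $|r_k| \leq \tfrac{\mathrm{Cst}}{\epsb_k n}\bigl(\tfrac{1}{100}|\widetilde{A}_k| + \epsb_k^{1/4}K_\epsilon + n^{-1/30}\bigr)$, and \eqref{eqn_variance_estimate_A} gives $\var(\Delta\widetilde{A}_k \mid \mathcal{F}_k) = \tfrac{2\sqrt{3}}{\epsb_k n}(1+o(1))$; using $\epsb_k \geq \delta n^{-2/5}$ the $\epsb_k^{1/4}$ and $n^{-1/30}$ pieces are genuinely negligible at the relevant scale, and the only non-perturbative error is the term $\tfrac{1}{100}|\widetilde{A}_k|$, which will be removed at the end by sending the estimate-parameter of Propositions~\ref{prop_drift_estimates}--\ref{prop_variance_estimates} to $0$. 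It is precisely the fact that the main drift of $\widetilde{A}$ is linear in $\widetilde{A}_k$ with a negative coefficient (the ``self-correcting'' effect) that keeps $\widetilde{A}$ of order $\xb_k^{1/8}$ and makes these errors harmless.

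The algebraic heart of the argument is that this self-correcting drift is killed by an integrating factor. Using $\epsb_k n = \xb_k n^{3/5}$, set
\[ N_k := \xb_k^{-1/4}\,\widetilde{A}_k, \qquad \xb_k = n^{-3/5}(t_{\ext}n - k). \]
Since $\xb_{k+1}^{-1/4} = \xb_k^{-1/4}\bigl(1 + \tfrac14\tfrac{n^{-3/5}}{\xb_k} + O(\xb_k^{-2}n^{-6/5})\bigr)$, the leading drift of $N_k$ cancels, so $N_k$ is a martingale up to a negligible error, with $\var(\Delta N_k \mid \mathcal{F}_k) = \tfrac{2\sqrt{3}}{\xb_k^{3/2}n^{3/5}}(1+o(1))$ and increments of size $O(\delta^{-1}n^{-1/5})$ on the window. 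Reparametrising by the clock $\sigma_k := 4\sqrt{3}\,\xb_k^{-1/2}$, which is increasing in $k$, changes by $o(1)$ per step and satisfies $\xb_k = 48\,\sigma_k^{-2}$ identically, one checks that the conditional variance of $\Delta N_k$ equals the increment $\Delta\sigma_k$ to leading order, so the accumulated predictable quadratic variation of $N$ over an interval converges deterministically to the corresponding increment of $\sigma$. A martingale functional central limit theorem (see e.g.\ \cite{ethier2009markov}) then shows that $N$, read in the time variable $\sigma$, converges in law to a Brownian motion; at the left edge $\xb_k = K$ one has $|N_k| \leq K_\epsilon K^{1/8-1/4} = K_\epsilon K^{-1/8} \to 0$ and $\sigma_k = 4\sqrt3 K^{-1/2} \to 0$ as $K \to \infty$, so in the triple limit this is a standard Brownian motion $(W_\sigma)_{\sigma \geq 0}$ started from $0$.

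It remains to rewrite $\theta^n = \inf\{k : X_k = 0\}$. From $X_k = n\mathscr{X}(k/n) + \epsb_k^{3/4}\sqrt{n}\,\widetilde{A}_k$ and $\mathscr{X}(t_{\ext}-\epsb) = 3\epsb^2(1+o(1))$ by \eqref{eqn_fluidlimit_near_end} (uniformly over $\epsb = \xb_k n^{-2/5} \leq Kn^{-2/5}$), the event $\{X_k = 0\}$ is, up to a factor $1+o_n(1)$, the event $\{\widetilde{A}_k = -3\xb_k^{5/4}\}$, i.e.\ $\{N_k = -3\xb_k\}$, i.e.\ $\{W_{\sigma_k} = -144\,\sigma_k^{-2}\}$ in the limit. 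By continuity of the hitting-time functional $w \mapsto \inf\{\sigma \geq 0 : w(\sigma) = -144\sigma^{-2}\}$ (the barrier being continuous and strictly negative, equal to $-\infty$ near $\sigma = 0$ so that no early hitting occurs) together with the fact that this hitting time is a.s.\ finite and positive ($-144\sigma^{-2} \to 0^-$ and $W$ is recurrent), one obtains $\sigma_{\theta^n} \xrightarrow{(d)} \inf\{\sigma \geq 0 : W_\sigma = -144\sigma^{-2}\}$; the a.s.\ finiteness also guarantees that w.h.p.\ the stopping occurs before $\xb_k$ reaches $\delta$, so the window restriction disappears in the limit $\delta \to 0$. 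Finally, by Brownian scaling $W_{c\sigma} \overset{d}{=} \sqrt{c}\,\widetilde{W}_\sigma$, the choice $c = 144^{2/5}$ turns the barrier $-144\sigma^{-2}$ into $-\sigma^{-2}$, so $\inf\{\sigma : W_\sigma = -144\sigma^{-2}\} \overset{d}{=} 144^{2/5}\,\vartheta$, whence
\[ \xb_{\theta^n} = 48\,\sigma_{\theta^n}^{-2} \xrightarrow[n\to\infty]{(d)} 48 \cdot 144^{-4/5}\,\vartheta^{-2} = 3^{-3/5}\cdot 2^{4/5}\cdot\vartheta^{-2}, \]
as claimed.

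The main obstacle is the diffusion-approximation step. One must verify the Lindeberg and predictable-variation hypotheses of the martingale CLT uniformly on the window down to the cutoff $\xb_k = \delta$; absorb the non-perturbative $\tfrac{1}{100}|\widetilde{A}_k|$ drift error by sending the estimate-parameter of Propositions~\ref{prop_drift_estimates}--\ref{prop_variance_estimates} to $0$ (a Gronwall/comparison argument showing the limiting law does not depend on it); and justify that the limits $n \to \infty$, $\delta \to 0$ and $K \to \infty$ commute with the hitting-time functional — the last point relying precisely on the a.s.\ positivity and finiteness of $\vartheta$.
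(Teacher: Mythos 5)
Your proposal is correct and takes essentially the same route as the paper: the same renormalisation $\widetilde{A}_k/\xb_k^{1/4}$ with the drift cancelled by the integrating factor, the same drift/variance inputs from Propositions~\ref{prop_drift_estimates} and~\ref{prop_variance_estimates} with the estimate parameter sent to $0$, a diffusion/martingale functional CLT giving a Brownian limit (your quadratic-variation clock $\sigma_k=4\sqrt{3}\,\xb_k^{-1/2}$ is exactly the paper's Dubins--Schwarz time change $t\mapsto 1/\sqrt{t}$ up to constants), the same identification of $\theta^n$ as the crossing of the barrier $-3\xb_k$ coming from $n\mathscr{X}$, and the same Brownian scaling yielding $3^{-3/5}2^{4/5}\vartheta^{-2}$. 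The only (minor, routine) difference is that the paper artificially extends the process beyond $\theta^n$ by independent centred increments so the functional limit theorem applies on the whole window, a stopping-time technicality your sketch leaves implicit.
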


 \proof  Fix $ \epsilon>0$ and let $K_{ \epsilon}>0$ so that on an event $ \mathcal{E}_{n}$ of probability at least $1 - 3\varepsilon$, the conclusions of Lemma \ref{lem:enterregion}, Proposition \ref{prop:controlA} and Proposition \ref{prop:goodregion} hold. Fix $ K_{ \epsilon}^{-1} > \xi>0$ small enough so that $K_{ \epsilon} \xi^{1/8} \leq  \epsilon$. We shall first focus on the times $k$ satisfying $  \xi \leq  \xb_{k} \leq \xi^{-1}$ and consider the renormalized process $$ \widetilde{F}_{k} = \frac{ \widetilde{A}_{k}}{ \xb_ k^{1/4}}, \quad 0 \leq k \leq \theta^{n}.$$ Let us compute its conditional expected drift and variance: for $k < \tilde{\theta}^{n}$ with $\xi \leq  \xb_{k} \leq \xi^{-1}$, on the event $ \mathcal{E}_{n}$ the assumptions of Proposition \ref{prop_drift_estimates} hold, so that using $ \epsb_{k} = n^{-2/5} \xb_{k}$ we have
\begin{align} 
  \mathbb{E} [ \Delta \widetilde{F}_k | \mathcal{F}_k, \mathcal{E}_{n}] &\leq \frac{\delta}{\xb_k n^{3/5}} |\widetilde{A}_k| + \frac{K}{\xb_k n^{3/5}} n^{-1/30} =  \frac{\delta}{\xb_k^{3/4} n^{3/5}} |\widetilde{F}_k| + \frac{K}{\xb_k n^{3/5}} n^{-1/30}   \label{eq:driftF}\\
\left|\mathrm{Var} \left( \Delta \widetilde{F}_k | \mathcal{F}_k,\mathcal{E}_{n}\right) - \frac{2 \sqrt{3}}{\xb_k^{3/2} n^{3/5}}\right| &= \left|\frac{1}{\xb_k^{1/2}}\mathrm{Var} \left( \Delta \widetilde{A}_k | \mathcal{F}_k\right) -   \frac{2 \sqrt{3}}{\xb_k^{3/2}n^{3/5}}\right| \leq \frac{ \delta}{\xb_k^{3/2} n^{3/5} }.\label{eq:varianceF}
 \end{align} 
 We now make $\delta$ vary with $n$ and take $ \delta \equiv \delta_n \xrightarrow[n\to\infty]{} 0$ in the above displays. Indeed, using the notation of Propositions \ref{prop_drift_estimates} and \ref{prop_variance_estimates} we can do so as soon as $ \eta ( \delta_n) >  1/\xi \cdot n^{-2/5}$. 
 To avoid stopping times issues, we possibly extend $ \widetilde{F}$ after time $ \theta^{n}$ (in the case  $ \xb_{\theta^{n}}  \leq \xi$) by a process $ \widehat{F}$ whose increments are $\pm (\frac{2 \sqrt{3}}{\xb_k^{3/2}n^{3/5}})^{1/2}$ with probability $1/2$ (in particular independent, centered, with variance $\frac{2 \sqrt{3}}{ \xb_k^{3/2}n^{3/5}}$ and whose $L^{ \infty}$-norm tends to $0$ uniformly as $ n \to \infty$), so that our estimates \eqref{eq:driftF} and \eqref{eq:varianceF} remain true for all $ \{ k : \xi \leq  \xb_{k} \leq \xi^{-1}\}$. 
 Let us recapitulate what we have: with probability at least $1- 3 \epsilon$ for all $ \{ k : \xi \leq  \xb_{k} \leq \xi^{-1}\}$:
$$ \left\{\begin{array}{l}
\displaystyle| \widehat{F}_{ nt_{ \mathrm{ext}}- \xi^{{-1}}n^{3/5}}| < \epsilon, \quad (\mbox{by Prop. \ref{prop:controlA} and the assumption }K_{ \epsilon} \xi^{1/8} \leq  \epsilon),
\\
\ \\ 
\displaystyle\mathbb{E} [ \Delta \widehat{F}_k | \mathcal{F}_k] = o(n^{-3/5}) \cdot | \widehat{F}_k| +o(n^{-3/5}), \\
\displaystyle \mathrm{Var} \left( \Delta \widehat{F}_k | \mathcal{F}_k\right) = \frac{2 \sqrt{3}}{\xb_k^{3/2} n^{3/5}} + o(n^{-3/5}),\\
\displaystyle \|\Delta \widehat{F}_k\|_{\infty} = o(1),
\end{array}\right.$$
where the $o(1)$ function is uniform in $\{ k : \xi \leq  \xb_{k} \leq \xi^{-1}\}$. By standard results in diffusion approximation, see e.g.~\cite{kushner1974weak}, this implies the following weak convergence for the $\|\|_{\infty}$-norm:
$$\left(\widehat{F}_{t_ \mathrm{ext} n - t n^{3/5}}-\widehat{F}_{t_ \mathrm{ext} n - \xi^{{-1}}n^{3/5}}\right)_{ \xi\leq t \leq \xi^{-1} } \xrightarrow[n\to\infty]{} ( \mathcal{H}_{t})_{\xi \leq t \leq \xi^{-1}},$$ where the process $\mathcal{H}$ satisfies the stochastic differential equation (in reverse time) $ \mathrm{d} \mathcal{H}_{-t} = \frac{\sqrt{2 \sqrt{3}}}{ t^{3/4}} \mathrm{d}B_{-t}$ with initial condition $ \mathcal{H}_{\xi^{-1}}=0$.  By \rev{Dubins}-Schwarz theorem, the solution of this SDE can be written as  $$2 \cdot 3^{1/4} \left(W_{ \frac{1}{ \sqrt{t}}}- W_{ \frac{1}{ \sqrt{\xi^{-1}}}}\right)_{ \xi \leq t \leq \xi^{-1}}$$ where $W$ is a standard linear Brownian motion with $W_{0}=0$. Letting $ \epsilon \to 0$ and $\xi \to 0$, we deduce the following weak convergence over all compact subsets of $ (0,\infty)$:
  \begin{eqnarray}\left(\widehat{F}_{t_ \mathrm{ext} n - tn^{3/5}}\right)_{ 0 < t < \infty } \xrightarrow[n\to\infty]{} \left(W_{ \frac{1}{ \sqrt{t}}}\right)_{ 0 < t < \infty }.   \label{eq:convBrownien1}\end{eqnarray}

To see that the above convergence implies the convergence of stopping times recall that
 \begin{eqnarray*} \xb_{\theta^{n}} := \sup\{ \xb_{k} \geq 0,\ X_{k}= 0\}&=&\sup\{ \xb_{k} \geq 0,\ \widetilde{F}_k = - n^{4/5} \mathscr{X}(k/n)/ \xb_k\}\\ &=& \sup\{ \xb_{k} \geq 0,\ \widehat{F}_k \leq - n^{4/5} \mathscr{X}(k/n)/\xb_k\}. \end{eqnarray*} 
In particular, the time $ \xb_{\theta^{n}}$ can be seen as the first time when started from $+\infty$ that the process $ \widehat{F}$ crosses the barrier $ \mathscr{C}^n$ defined by $$ \mathscr{C}^n( \xb_{k}) = - n^{4/5} \mathscr{X}(k/n)/\xb_k.$$
Recalling \eqref{eqn_fluidlimit_near_end}, we have $- n^{4/5} \mathscr{X}(k/n)/\xb_k \sim -3 \xb_k$, so that the barrier $ \mathscr{C}^n$ converges towards the graph $ \mathscr{C}$ of the function $ t \mapsto -3t$. Since the crossing of $ \mathcal{C}$ by  $\left(W_{ {1/\sqrt{t}} }: 0 < t < \infty \right)$ when started from $+\infty$ happens at an almost surely positive time $\tau$ and since $W$ immediately takes values strictly above and below $ \mathscr{C}$ after hitting it, it follows that 
$$ \xb_{\theta^{n}} \xrightarrow[n\to\infty]{(d)} \tau = \sup \{ t \geq 0 :\ 2 \cdot 3^{1/4}\cdot W_{ \frac{1}{ \sqrt{t}}} = -3 t\}.$$
By scaling we have the equality in distribution   \begin{eqnarray*} \tau &\overset{(d)}{=}&\sup \{ t \geq 0 :\ 2 \cdot 3^{1/4}\cdot W_{ \frac{1}{ \sqrt{t}}} = -3 t\}\\
&\underset{u = 1/\sqrt{t}}{{=}}& \left( \inf \{ u \geq 0 : W_{u} = \frac{3^{{3/4}}}{2} u^{-2}\}\right)^{-2}\\
&\underset{\alpha>0}{\overset{(d)}{=}}& \left( \inf \{ u \geq 0 : \frac{1}{\sqrt{\alpha}}\cdot W_{\alpha u} = \frac{3^{{3/4}}}{2} u^{-2}\}\right)^{-2}\\
&\underset{\alpha u= v}{{=}}& \left( \frac{1}{\alpha} \inf \{ v \geq 0 :  W_{v} =  \sqrt{\alpha} \alpha^{2} \cdot \frac{3^{{3/4}}}{2} v^{-2}\}\right)^{-2}\\
&\underset{\alpha^{5/2} \cdot \frac{3^{{3/4}}}{2}=1}{{=}}&  \left( \frac{3^{{3/4}}}{2}\right)^{-4/5} \left(\inf \{ v \geq 0 :  W_{v} = v^{-2}\}\right)^{-2}.  \end{eqnarray*} The statement follows. \endproof

\subsection{Proof of Theorem \ref{thm:maincritical}: Size and composition of the KS-Core}
We have now all the ingredients to prove our main Theorem \ref{thm:maincritical}. First by Proposition \ref{prop:end}, the  renormalized ending time $  \xb_{\theta^{n}}$ converges in distribution to $  2^{4/5}3^{-3/5} \vartheta^{-2}$ where $\vartheta$ is the hitting time of the curve $ t \mapsto -t^{-2}$ by a Brownian motion. At this time, by Proposition \ref{prop:goodregion} and \eqref{eqn_fluidlimit_near_end} we have 
$$ Y_{\theta^{n}} \quad = \underbrace{B_{ \theta^{n}}}_{  \underset{ \mathrm{Prop}. \ref{prop:goodregion}}{\leq}  \mathrm{Cst}\sqrt{n} \log(n)^{3/4}} + \underbrace{n \mathscr{Y} \left( \frac{\theta^{n}}{n}\right)}_{ \underset{\eqref{eqn_fluidlimit_near_end}}{\sim} 4 \xb_{\theta^{n}} n^{3/5}} \quad \rev{= o_{ \mathbb{P}}(n^{3/5}) + 2^{14/5}3^{-3/5} \vartheta^{-2} \xb_{\theta^{n}} n^{3/5} },$$
$$ Z_{\theta^{n}} \quad = \underbrace{C_{ \theta^{n}}}_{  \underset{ \mathrm{Prop}. \ref{prop:goodregion}}{\leq} \mathrm{Cst} \,n^{3/10} \log(n)^{3/4}} + \underbrace{n \mathscr{Z} \left( \frac{\theta^{n}}{n}\right)}_{ \underset{\eqref{eqn_fluidlimit_near_end}}{\sim} 4 \sqrt{3} \rev{ \xb_{\theta^{n}}^{3/2}} n^{2/5}} \quad \rev{=o_{ \mathbb{P}}(n^{2/5}) + 2^{16/5} 3^{-2/5} \vartheta^{-3} \xb_{\theta^{n}}^{3/2} n^{2/5} }.$$

Moreover using Proposition \ref{prop:markovexplo}, the KS-Core is just obtained by pairing the remaining half-edges uniformly at random. Our theorem follows. \rev{\sout{Ouff.}}

 \section{Comments} \label{sec:comments}
 We conclude this paper with a few perspectives that our work opens.
 \paragraph{Near critical heuristics.}\label{subsec:near_critical} \rev{The exact same proof would work if the initial degree distribution is critical in the sense of Theorem~\ref{thm:phasetransition}. Moreover, our proof still works as long as the initial fluctuations are $O(n^{1/2})$ (beyond that, it is not possible anymore to use directly the Ethier--Kurtz results on the bulk of the exploration in Lemma~\ref{lem:enterregion}).} However, we believe that our techniques can be used to tackle the near-critical window for the Karp-Sipser core. In particular, this window should be obtained by starting from 
\begin{eqnarray*}d_{1,c}^{n} = n (1-\frac{ \sqrt{3}}{2}) + O(n^{3/5}), \quad 2 d_{2,c}^{n} = O(n^{3/5}) , \quad \mbox{ and } 3 d_{3,c}^{n} = n \frac{ \sqrt{3}}{2} +O(n^{3/5}),  \end{eqnarray*} whereas we studied only the critical case \eqref{eq:convcrit}. All these shifts in the starting configuration should result in a shift of order $ O(n^{3/5})$ of the absorption time. In a similar vein, one could study the ``Phase 2'' of the Karp-Sipser algorithm \cite{aronson1998maximum} which, in the supercritical case, consists in removing a uniform vertex when there are no leaves left. The analysis of this phase should be intimately connected to the above near-critical dynamics.

\paragraph{Universality.}
Obviously, we conjecture that the geometry of the critical core and the scaling limits results are independent of the fine details of the model of random graph we started with. In particular, it should hold for the Erd{\H{o}}s-R\'enyi case or for configuration models with small enough degrees. However, proving a general result seems challenging because we heavily rely on the exact form of the fluid-limit of our exploration processes (such results are available for the Erd{\H{o}}s--R\'enyi case, see \cite{aronson1998maximum}).


\paragraph{Comparison with the $k$-core phase transition.} \label{sec:kcore}
Finally, it is interesting to compare our results with the appearance of the $k$-core in random graphs as studied in \cite{pittel1996sudden,janson2007simple}, where the phase transition is discontinuous \rev{for $k \geq 3$}.

Recall that the $k$-core of a graph $ \mathfrak{g}$ is the maximal subgraph of $ \mathfrak{g}' \subset \mathfrak{g}$ so that the induced degree inside $ \mathfrak{g}'$ of each of its vertices is at least $k$. The emergence of a giant $k$-core has been studied for the Erd{\H{o}}s--R\'enyi random graph and the configuration model, see \cite{pittel1996sudden,janson2007simple}. A difference with the Karp--Sipser core  is that the phase transition is discontinuous: when the $k$-core exists asymptotically, its proportion is bounded away from $0$. This can be explained heuristically as follows.

Suppose for the discussion that $k=3$ and that we are interested in the size of the $3$-core in a configuration model on vertices of degrees $1,2,3$ and $4$. As in the case of the Karp--Sipser algorithm, one can reveal the $3$-core by iteratively taking a leg attached to a vertex of degree $\leq 2$, remove it, and destroy the vertex it is attached to as well as the connection it makes in the graph (hence diminishing the unmatched degree of the vertices in question).  As in this paper, if one starts with some proportions $p_1,p_2,p_3,p_4$ of legs attached to vertices of degree one, two, three and four, we can write the differential equation governing the fluid limit of this process, see \cite{janson2007simple}. The main difference with the Karp--Sipser core is that in this case, the number of legs attached to leaves (to be precise to vertices of degree $1$ or $2$) is not necessarily decreasing. Actually, in the critical case, the fluid limit of the proportion of vertices of degrees $1,2$ follows a curve which is tangent to the boundary of the domain at some point before diving back into the bulk of the simplexe and dying at the right corner, see Figure \ref{fig:kcore} (and compare with Figure \ref{fig:equadiff_trajectories}). This explains the first-order phase transition in this case: a slight perturbation of the initial conditions may push the curve to exit the domain at a very different location. 

\begin{figure}[!h]
 \begin{center}
 \includegraphics[width=10cm]{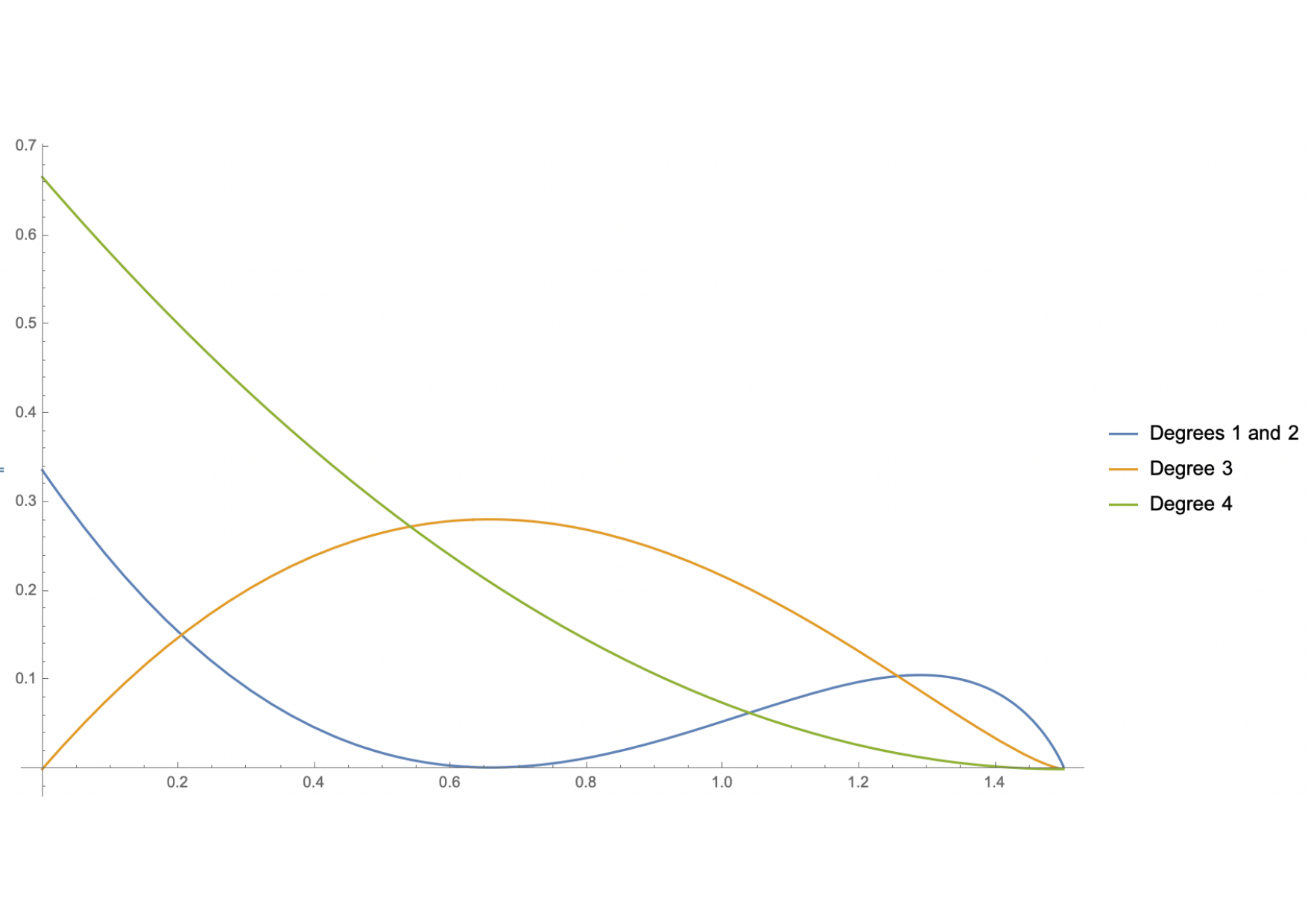} \hspace{0.5cm}\includegraphics[width=5cm]{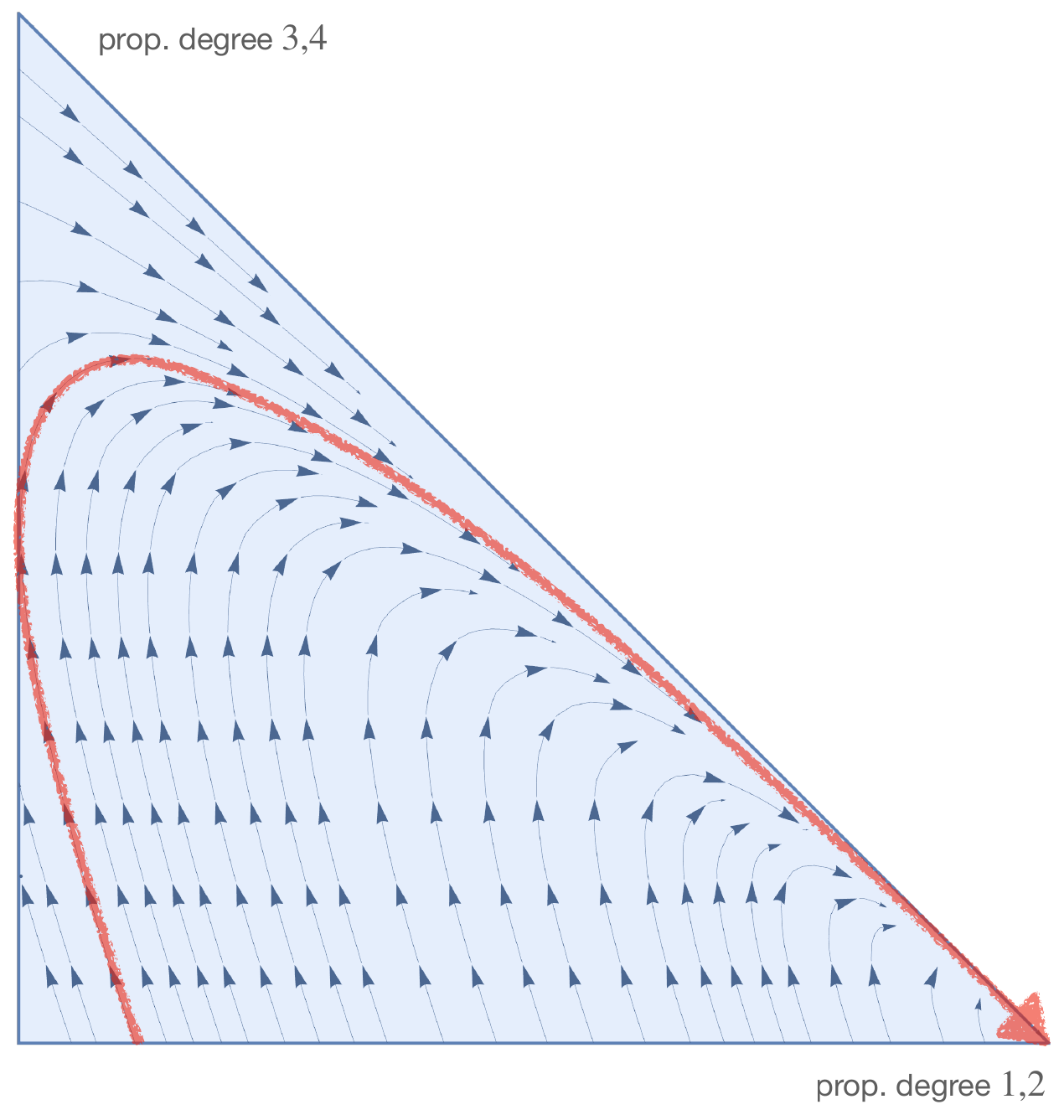}
 \caption{Illustration of the fluid limit of the renormalized number of legs attached to vertices of degree $4,3$ and $ \leq 2$ in the $k$-core algorithm at the critical point. In particular, a slight perturbation of the initial conditions may cause a drastic change of the absorption time of the system and this explains why the $k$-core percolation exhibits a first-order phase transition. \label{fig:kcore}}
 \end{center}
 \end{figure}

\bibliographystyle{siam}
\bibliography{bibli}

\end{document}